\documentclass[10pt]{article}
\RequirePackage[left=28mm,right=28mm,top=35mm,bottom=30mm]{geometry}
\usepackage[utf8]{inputenc}
\usepackage[english]{babel}
\usepackage[]{amsmath}
\usepackage{amssymb}
\usepackage{amsthm}
\usepackage[shortlabels]{enumitem} %to index the enumeration
\usepackage[square,sort,comma,numbers]{natbib}
\usepackage{afterpage}
\usepackage{graphicx} %to improve display on 3.2
\usepackage{tikz-cd} %definicoes categoricas
\usepackage{stmaryrd} %allows command mapsfrom
\usepackage{url}
\usepackage{hyperref}
\usepackage{multicol}
\usepackage{xcolor}
\usepackage[toc, title]{appendix}

%%%%% To produce widecheck%%%%%%%%%%%%%%%
\DeclareFontFamily{U}{mathx}{\hyphenchar\font45}
\DeclareFontShape{U}{mathx}{m}{n}{
	<5> <6> <7> <8> <9> <10>
	<10.95> <12> <14.4> <17.28> <20.74> <24.88>
	mathx10
}{}
\DeclareSymbolFont{mathx}{U}{mathx}{m}{n}
\DeclareFontSubstitution{U}{mathx}{m}{n}
\DeclareMathAccent{\widecheck}{0}{mathx}{"71}
\DeclareMathAccent{\wideparen}{0}{mathx}{"75}
%%%%%%%%%%%%%%%%________________________________

\tikzcdset{scale cd/.style={every label/.append style={scale=#1},
		cells={nodes={scale=#1}}}}
	
	\usepackage{mathtools}

%%%%commands
%\newcommand{\projr}{\!\operatorname{proj-}\!\!}  
\newcommand{\End}{\operatorname{End}}
\newcommand{\Hom}{\operatorname{Hom}}
\newcommand{\gldim}{\operatorname{gldim}}

\newcommand{\Ext}{\operatorname{Ext}}
\newcommand{\Tor}{\operatorname{Tor}}
\newcommand{\add}{\!\operatorname{add}}

\newcommand{\m}{\!\operatorname{-mod}} 
\newcommand{\M}{\!\operatorname{-Mod}} 
\newcommand{\proj}{\!\operatorname{-proj}}
\newcommand{\free}{\!\operatorname{-free}}
\newcommand{\St}{\Delta}
\newcommand{\Cs}{\nabla}
\renewcommand{\L}{\Lambda}
\renewcommand{\l}{\lambda}
 
\newcommand{\id}{\operatorname{id}}
\newcommand{\mi}{\mathfrak{m}}

\newcommand{\MaxSpec}{\operatorname{MaxSpec}} 
\newcommand{\Stsim}{\tilde{\St}}
\newcommand{\im}{\!\operatorname{im}} 

\newcommand{\coker}{\operatorname{coker}}
\newcommand{\sumSt}{\underset{\l\in\L}{\bigoplus}}

\newcommand{\Cssim}{\tilde{\Cs}}
 %retirar este commando
%\newcommand{\injdim}{\operatorname{idim}}

\newtheorem{numberingthm}{Theorem}[section] % reset theorem numbering for each section
%theorems and definitions
\theoremstyle{definition}
\newtheorem{Def}[numberingthm]{Definition}

\theoremstyle{plain}
\newtheorem{Prop}[numberingthm]{Proposition}
\newtheorem{Theorem}[numberingthm]{Theorem}
\newtheorem{Cor}[numberingthm]{Corollary}
\newtheorem{Lemma}[numberingthm]{Lemma}
\newtheorem{Remark}[numberingthm]{Remark}

\theoremstyle{remark}

%theorems for introduction

\theoremstyle{empty}
\newtheorem*{thmintroduction}{Theorem}
\newtheorem*{propintroduction}{Proposition}
\newtheorem*{corintroduction}{Corollary}
%keywords
\providecommand{\keywords}[1]
{\scriptsize
	\textbf{\textit{Keywords:}} #1 \normalsize \hfill
}
\providecommand{\msc}[1]
{\scriptsize
	\textbf{\textit{2020 Mathematics Subject Classification:}} #1 \normalsize \hfill
}

%opening
\title{Characteristic tilting modules and Ringel duality \\in the Noetherian world}
\author{Tiago Cruz} 
\date{}

\newcommand{\Address}{{
		\bigskip
		\footnotesize
		
		TIAGO CRUZ,\par \textsc{Institute of Algebra and Number Theory}\par \textsc{University of Stuttgart,}\par \textsc{Pfaffenwaldring 57, 70569 Stuttgart, Germany,}\par\nopagebreak
		\textit{E-mail address}, T.~Cruz: \texttt{tiago.cruz@mathematik.uni-stuttgart.de}
		}}
		
		\allowdisplaybreaks

		\setcounter{subsection}{1}
		\setcounter{secnumdepth}{4} % so that paragraphs are numbered
		\setcounter{tocdepth}{2} %if paragraphs are to appear in toc

\begin{document}

\maketitle

\begin{abstract}
	The foundations of Ringel duality for split quasi-hereditary algebras over commutative Noetherian rings are strengthened. Several descriptions and properties of the smallest resolving subcategory containing all standard modules over split quasi-hereditary algebras over  commutative Noetherian rings are provided.
	In particular, given two split quasi-hereditary algebras $A$ and $B$, we prove that any exact equivalence between the smallest resolving subcategory containing all standard modules over $A$ and the smallest resolving subcategory containing all standard modules over $B$ lifts to a Morita equivalence between $A$ and $B$ which preserves the quasi-hereditary structure.
\end{abstract}
 \keywords{split quasi-hereditary algebras, Ringel dual, characteristic tilting module, base change property\\
 	\msc{16G30, 16D10}}

	%At the end of the paper, the reader can find in \ref{Existence and properties of costandard modules} an appendix that addresses the construction and basic properties of characteristic tilting modules of split quasi-hereditary Noetherian algebras. 
	%The appendix is intended for the reader who may be interested in getting the details of why the exact category of modules over a split quasi-hereditary Noetherian algebras admitting a finite filtration by modules in the additive closure of standard modules is a resolving subcategory and how this resolving subcategory controls all equivalences of split highest weight categories.
	%Here, the reader can also get information on the nice behaviour of characteristic tilting modules and the homomorphism modules between modules in the additive closure of a characteristic tilting module under change of ground ring.

	\section{Introduction}\label{Existence and properties of costandard modules}
	
	(Split) quasi-hereditary algebras over commutative Noetherian rings were introduced in \cite{CLINE1990126} and a module theoretical approach to them was initiated in \cite{Rouquier2008}. Paradigmatic examples of quasi-hereditary algebras are Schur algebras and blocks of the BGG category $\mathcal{O}$ of a complex semi-simple Lie algebra, and both of these examples admit integral versions (see for example \cite{p2paper}). Furthermore, going integrally is important for modular representation theory and there are phenomena in quasi-hereditary algebras over fields that become easier to visualise and comprehend by studying split quasi-hereditary algebras over commutative Noetherian rings (see for example \citep[8.2]{Cr2}).

	Continuing \cite{cruz2021cellular}, this paper is about consolidating the foundations of the theory of split highest weight categories. The focus is on the structure of $\mathcal{F}(\Stsim)$, in particular on understanding why $\mathcal{F}(\Stsim)$ is a resolving subcategory of $A\m$  in the integral setup and how a choice of a characteristic tilting module determines $\mathcal{F}(\Stsim)$ also in the integral setup. 
	Some proofs carry over almost unchanged from the classical case to the integral setup while others require more work. Indeed, in the general setting, there are obstructions that make it impossible to mimic the proofs of the classical case to our situation. For instance, the categories $\mathcal{F}(\Stsim)$ and $A\m$ are not, in general, Krull-Schmidt categories and not all objects admit a projective cover.
	Furthemore some proofs will follow ideas of \cite{Rouquier2008}, with some modifications that are hoped to make the contents more accessible. 
	 This paper provides background material needed in \cite{p2paper, Cr2}.
	
	The main objects in the theory of quasi-hereditary algebras are the standard modules, and these are precisely the simple objects in the exact category of all modules admitting a filtration by standard modules. In the integral setup, this category does not necessarily contain all projective modules nor are standard modules necessarily indecomposable. To fix this issue  Rouquier in \cite{Rouquier2008} studied the larger category $\mathcal{F}(\Stsim)$ in a split quasi-hereditary algebra. This one is defined as the category whose modules admit a finite filtration by modules in the additive closure of standard modules. 
	
	Our main result (see Theorem \ref{standardsgivesthewholealgebra}) establishes that the subcategory $\mathcal{F}(\Stsim)$ completely determines the split quasi-hereditary structure on $A$. 
	
	\begin{thmintroduction}[\ref{standardsgivesthewholealgebra}]
		Let $A$ and $A'$ be two split quasi-hereditary algebras with standard modules $\{\Delta(\lambda)\colon \lambda\in \Lambda\}$ and $\{\Delta'(\gamma)\colon \gamma\in \Gamma\}$, respectively. Then, any exact equivalence $\Phi\colon \mathcal{F}(\Stsim)\rightarrow \mathcal{F}(\Stsim')$ lifts to a Morita equivalence $F\colon A\m\rightarrow A'\m$ such that the following diagram of functors is commutative, up to isomorphism,
	\begin{equation*}
		\begin{tikzcd}
			A\m \arrow[r, "F"] & A'\m \\
			\mathcal{F}(\Stsim) \arrow[u, hookrightarrow] \arrow[r, "\Phi"] & \mathcal{F}(\Stsim') \arrow[u, hookrightarrow]
		\end{tikzcd}.
	\end{equation*}	Moreover, $F$ is an equivalence of highest weight categories in the sense of \cite{Rouquier2008}.
	\end{thmintroduction}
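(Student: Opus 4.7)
The plan is to recover the Morita equivalence $F$ by recognising the projective $A$-modules intrinsically inside the exact category $\mathcal{F}(\Stsim)$ and then invoking classical Morita theory. Since $\mathcal{F}(\Stsim)$ is the smallest resolving subcategory of $A\m$ containing the standards, every finitely generated projective $A$-module lies in $\mathcal{F}(\Stsim)$, and every object of $\mathcal{F}(\Stsim)$ admits a projective resolution by objects of $A\proj$ that stays inside $\mathcal{F}(\Stsim)$. My first step is to prove that the projective objects of the exact category $\mathcal{F}(\Stsim)$ coincide with the finitely generated projective $A$-modules: one inclusion is $\Ext^1_A$-vanishing, while for the other, any exact-projective $M\in\mathcal{F}(\Stsim)$ splits off from an admissible epimorphism $P\twoheadrightarrow M$ with $P\in A\proj$, which is available by the resolving property.

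Since $\Phi$ is an exact equivalence it preserves exact-projectives, so it restricts to an additive equivalence $A\proj\simeq A'\proj$. Setting $P:=\Phi(A)$, the module $P$ is a finitely generated projective generator of $A'\m$, and $\Phi$ induces a ring isomorphism $A\cong \End_A(A)^{\mathrm{op}}\cong \End_{A'}(P)^{\mathrm{op}}$. Classical Morita theory then yields an equivalence $F\colon A\m\to A'\m$, for instance $F=P\otimes_A-$ with inverse $\Hom_{A'}(P,-)$. The diagram commutes up to isomorphism because $F$ and $\Phi$ agree on $A\proj$ by construction, and any object of $\mathcal{F}(\Stsim)$ is the cokernel of a map between objects of $A\proj$ inside $\mathcal{F}(\Stsim)$; since both functors are exact, a natural isomorphism on $A\proj$ extends to a natural isomorphism on all of $\mathcal{F}(\Stsim)$.

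The principal obstacle is confirming that $F$ is an equivalence of highest weight categories, meaning that it transports standards to standards. In the classical field-theoretic setting the standards are exactly the simple objects of the exact category $\mathcal{F}(\St)$, but this description is unavailable here, where $\mathcal{F}(\Stsim)$ need not be Krull--Schmidt and standards may be decomposable. Instead I plan to use an intrinsic characterisation of $\operatorname{add}\Stsim$ inside $\mathcal{F}(\Stsim)$, developed earlier in the paper, based on the heredity chain of idempotents and on \emph{Ext}-vanishing relative to the weight poset, supplemented by rank data obtained by base change to residue fields. Because $F|_{\mathcal{F}(\Stsim)}\cong\Phi$ is an exact equivalence onto $\mathcal{F}(\Stsim')$, this characterisation is preserved, so $F$ carries $\operatorname{add}\Stsim$ onto $\operatorname{add}\Stsim'$ and transfers the partial order on weights, yielding the required equivalence of highest weight categories in the sense of \cite{Rouquier2008}.
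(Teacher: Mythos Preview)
Your first three steps---identifying the exact-projectives of $\mathcal{F}(\Stsim)$ with $A\proj$, transporting the regular module to a progenerator $P=\Phi(A)$ of $A'$, and lifting $\Phi$ to a Morita equivalence---match the paper's argument. (The identification of exact-projectives is precisely the lemma that $M\in\mathcal{F}(\Stsim)$ with $\Ext^1_A(M,\bigoplus_\lambda\St(\lambda))=0$ forces $M\in A\proj$.) The paper writes the lift as $\Hom_{A'}(\Phi A,-)$ and checks commutativity in one line via $\Hom_{A'}(\Phi A,X)\simeq\Hom_A(A,\Phi^{-1}X)\simeq\Phi^{-1}X$ for $X\in\mathcal{F}(\Stsim')$; your cokernel extension would need an induction on the length of a finite $A\proj$-resolution inside $\mathcal{F}(\Stsim)$ to be complete, but it can be made to work.

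The genuine gap is your final paragraph. You invoke ``an intrinsic characterisation of $\add\Stsim$ inside $\mathcal{F}(\Stsim)$, developed earlier in the paper,'' but the paper proves no such thing, and over a general Noetherian base it is not clear one exists: a heredity chain of idempotents lives in $A$, not in the abstract exact category, and invoking base change to residue fields already presupposes knowing which object plays the role of $A$. What the paper actually does is a hands-on reverse induction along a linear extension $\St_1,\dots,\St_n$ of $\Lambda$. The top standard $\St_n$ is projective, so $\Phi\St_n$ is projective in $A'$, and one argues directly that it has the form $\St'(\theta_n)\otimes_R U_n$. Assuming $\Phi\St_j\simeq\St'(\theta_j)\otimes_R U_j$ for all $j\geq t$, one first checks $\Phi\St_{t-1}\in\mathcal{F}(\Stsim'_{\Gamma\setminus\{\theta_n,\dots,\theta_t\}})$ by a Hom-vanishing argument, then takes a maximal $\nu$ occurring in a $\Stsim'$-filtration of $\Phi\St_{t-1}$ and looks at the bottom layer $0\to\St'(\nu)\otimes_R S_\nu\to\Phi\St_{t-1}\xrightarrow{\pi} X\to 0$; applying $\Phi^{-1}$ and comparing with $\St_{t-1}$ over all residue fields (Nakayama) forces either $\pi=0$ or $S_\nu=0$, and iterating leaves a single layer. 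A symmetric run with $\Phi^{-1}$ then shows each $U_j$ is invertible. This concrete induction is the substance of the theorem; your proposal needs to supply it rather than defer to an unstated characterisation.
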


%	More precisely, given two split quasi-hereditary algebras $A$ and $A'$, we prove that exact equivalences between subcategories $\mathcal{F}(\Stsim)$, $\mathcal{F}(\Stsim')$ of $A\m$ and $A'\m$, respectively, lift to Morita equivalences between $A$ and $A'$ that preserve the quasi-hereditary structure.
	
	Split quasi-hereditary algebras behave quite well under change of rings techniques (see for example \citep[Subsection 3.1]{cruz2021cellular}) and they form a class closed under taking the opposite algebra. To further illustrate that point in Proposition \ref{tensorproductstandcostand} we prove the following.
	\begin{propintroduction}[\ref{tensorproductstandcostand}]
	Let $R$ be a commutative Noetherian ring. Let $A$ be a split quasi-hereditary $R$-algebra with standard modules $\{\Delta(\lambda)\colon \lambda\in \Lambda\}$. Suppose that $N$ admits a finite filtration by standard modules over the opposite algebra $A^{op}$.
	Then, 
		 the functor $N\otimes_A -\colon \mathcal{F}(\Stsim)\rightarrow R\proj$ is a well-defined exact functor.
	\end{propintroduction}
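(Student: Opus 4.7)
The plan is to prove jointly, for every $M\in\mathcal{F}(\Stsim)$ and every module $N$ with a finite filtration by standards of $A^{op}$, two claims: (a) $N\otimes_A M\in R\proj$; and (b) $\Tor_i^A(N,M)=0$ for every $i\geq 1$. Granting both, the long exact sequence for $\Tor$ applied to any short exact sequence inside $\mathcal{F}(\Stsim)$ (which remains one by the resolving property established earlier) yields that $N\otimes_A-$ is exact with image in $R\proj$, proving the proposition. Both claims reduce, by two nested inductions on the filtration lengths of $N$ (over $A^{op}$) and of $M$ (over $A$) together with the long exact sequence for $\Tor$ and closure of $R\proj$ under extensions in $R\m$, to the base case $N=\Delta^{op}(\lambda)$ and $M=\Delta(\mu)$ for arbitrary $\lambda,\mu\in\Lambda$.

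For the base case, I would construct a finite $A$-projective resolution of $\Delta(\mu)$ and apply base change. By decreasing induction along $\Lambda$, using the standard presentation $0\to K(\mu)\to P(\mu)\to\Delta(\mu)\to 0$ in which $P(\mu)$ is finitely generated $A$-projective and $K(\mu)$ is filtered by $\Delta(\mu')$ with $\mu'>\mu$, one obtains a finite $A$-projective resolution $P_\bullet\to\Delta(\mu)$ by finitely generated projectives; each $P_i$ then lies in $R\proj$ because $A\in R\proj$. Set $Q_\bullet:=\Delta^{op}(\lambda)\otimes_A P_\bullet$, a finite complex of $R$-projectives (each $Q_i$ is a summand of a finite direct sum of copies of $\Delta^{op}(\lambda)\in R\proj$). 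Since $\Delta(\mu)\in R\proj$, reducing $P_\bullet$ modulo any maximal ideal $\mi\subset R$ yields a projective resolution of $\Delta_{R/\mi}(\mu)$ over $A_{R/\mi}$, and $R$-flatness of $\Delta^{op}(\lambda)$ gives
\[
R/\mi\otimes_R Q_\bullet\;\cong\;\Delta^{op}_{R/\mi}(\lambda)\otimes_{A_{R/\mi}}P_{\bullet,R/\mi}.
\]
The right side computes $\Tor_i^{A_{R/\mi}}(\Delta^{op}_{R/\mi}(\lambda),\Delta_{R/\mi}(\mu))$, which vanishes for $i\geq 1$ by the classical field-theoretic fact (following from $\Ext^i_{A_{R/\mi}}(\Delta_{R/\mi}(\mu),\Cs_{R/\mi}(\lambda))=0$ via the $R/\mi$-duality identifying $\Delta^{op}_{R/\mi}(\lambda)$ with $\Hom_{R/\mi}(\Cs_{R/\mi}(\lambda),R/\mi)$).

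Since the positive-degree homology of $Q_\bullet$ — equal to $\Tor_i^A(\Delta^{op}(\lambda),\Delta(\mu))$ — is finitely generated over the Noetherian $R$ and is killed by every residue field, Nakayama's lemma yields (b). For (a), the same reduction shows that $R/\mi\otimes_R Q_\bullet$ has vanishing homology in positive degrees, which, viewing $Q_\bullet$ as an $R$-projective resolution of $\Delta^{op}(\lambda)\otimes_A\Delta(\mu)$, translates into $\Tor_i^R(R/\mi,\Delta^{op}(\lambda)\otimes_A\Delta(\mu))=0$ for $i\geq 1$. Localising at each $\mi$ and applying Nakayama one more time forces $(\Delta^{op}(\lambda)\otimes_A\Delta(\mu))_\mi$ to be $R_\mi$-free for every maximal $\mi$; as $\Delta^{op}(\lambda)\otimes_A\Delta(\mu)$ is finitely generated over Noetherian $R$, this yields (a). The chief technical obstacle is the base change isomorphism: it rests essentially on the $R$-projectivity of both $\Delta^{op}(\lambda)$ and $\Delta(\mu)$, without which the reduction to the well-known field-theoretic situation would not be available.
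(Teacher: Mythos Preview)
Your overall strategy coincides with the paper's: reduce to residue fields via base change, invoke the classical $\Ext$--$\Tor$ duality over the field, and then lift the vanishing back to $R$. The paper works directly with arbitrary $M\in\mathcal{F}(\Stsim)$ and the dual of arbitrary $N\in\mathcal{F}(\Cssim)$ rather than first reducing to single standard modules, but your preliminary induction on filtration length is harmless.

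There is, however, a genuine gap in your proof of (b). What you establish is $H_i(R/\mi\otimes_R Q_\bullet)=0$ for $i\geq 1$, and you then assert that ``the positive-degree homology of $Q_\bullet$ \dots\ is killed by every residue field'', i.e.\ that $R/\mi\otimes_R H_i(Q_\bullet)=0$. These two statements are not the same, and the first does not directly imply the second: the natural comparison map $R/\mi\otimes_R H_i(Q_\bullet)\to H_i(R/\mi\otimes_R Q_\bullet)$ factors as a surjection onto $E^\infty_{0,i}$ followed by an inclusion, and the surjection can have non-trivial kernel coming from the differentials $d_r\colon E^r_{r,\,i-r+1}\to E^r_{0,i}$ in the K\"unneth spectral sequence
\[
E^2_{p,q}=\Tor_p^R\bigl(H_q(Q_\bullet),\,R/\mi\bigr)\ \Longrightarrow\ H_{p+q}\bigl(R/\mi\otimes_R Q_\bullet\bigr).
\]
This passage is precisely the technical heart of the argument, and the paper devotes most of its proof to it, extracting the vanishing of each $\Tor_q^A(DN,M)$ from the spectral sequence one $q$ at a time. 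Your argument can be repaired either along the paper's lines, or by a descending induction on the degree in the bounded complex $Q_\bullet$ of finitely generated $R$-projectives: at the top degree $n$, injectivity of $d_n\otimes R/\mi$ for every $\mi$ forces $(d_n)_\mi$ to be split injective over each $R_\mi$, whence $H_n(Q_\bullet)=0$ and $\im(d_n)$ is $R$-projective; then pass to the truncated complex $Q_{n-1}/\im(d_n)\to Q_{n-2}\to\cdots$ and iterate. Once (b) is established rigorously, your argument for (a) is correct as written, since then $Q_\bullet$ genuinely is an $R$-projective resolution of $\Delta^{op}(\lambda)\otimes_A\Delta(\mu)$.
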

This fact explains the existence of many base change properties used in the literature of split quasi-hereditary algebras, for instance \citep[2.5]{Koenig2001}.

	In the integral setup, basic characteristic tilting modules are not necessarily unique. However, adapting the classical theory to the integral setup, we establish that Ringel duality exists in the integral setup. That is, the Ringel dual of the Ringel dual is again, up to isomorphism, the original split quasi-hereditary algebra. Further, using Theorem \ref{standardsgivesthewholealgebra} we prove that the Ringel dual admits the following characterisation.

	\begin{corintroduction}[\ref{standardstocostandardsringel}]Let $A$ be a split quasi-hereditary algebra with standard modules $\{\Delta(\lambda)\colon \lambda\in \Lambda\}$ and let $B$ be a split quasi-hereditary algebra with costandard modules $\{\mho(\chi)\colon \chi\in X\}$.
	Then, 
		$B$ is a Ringel dual of $A$ if and only if there exists an exact equivalence between the categories $\mathcal{F}(\Stsim)$ and $\mathcal{F}(\tilde{\mho}).$
	\end{corintroduction}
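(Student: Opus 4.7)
The plan is to derive both implications from the integral Ringel duality formalism developed earlier in the paper, combined with the rigidity result Theorem \ref{standardsgivesthewholealgebra}.

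The forward direction is essentially a restatement of integral Ringel duality. If $B$ is Morita equivalent, as a split quasi-hereditary algebra, to $\End_A(T)^{op}$ for some characteristic tilting module $T$ of $A$, then the version of Ringel duality established in the paper asserts that $\Hom_A(T,-)$ restricts to an exact equivalence $\mathcal{F}(\Stsim)\rightarrow \mathcal{F}(\tilde{\mho})$, where $\mho$ denotes the costandards of $B$; transporting along the quasi-hereditary Morita equivalence relating $B$ to $\End_A(T)^{op}$ then yields the desired exact equivalence.

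For the converse, suppose $\Phi\colon \mathcal{F}(\Stsim)\rightarrow \mathcal{F}(\tilde{\mho})$ is an exact equivalence, and let $C=R(B)$ be a Ringel dual of $B$, with standard modules $\{\Delta_C(\gamma)\}$. Applying the forward direction to the pair $(B,C)$ produces an exact equivalence $\Psi\colon \mathcal{F}(\tilde{\Delta}_C)\rightarrow \mathcal{F}(\tilde{\mho})$, and hence the composition $\Psi^{-1}\circ \Phi$ is an exact equivalence $\mathcal{F}(\Stsim)\rightarrow \mathcal{F}(\tilde{\Delta}_C)$. By Theorem \ref{standardsgivesthewholealgebra} this lifts to a Morita equivalence $F\colon A\m\rightarrow C\m$ of highest weight categories. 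Applying Ringel duality to $F$---that is, passing to the induced isomorphism between the endomorphism algebras of transported characteristic tilting modules---produces a Morita equivalence $R(A)\m\Morita R(C)\m$. Combining with the biduality $R(C)=R(R(B))\Morita B$ (again as split quasi-hereditary algebras) gives $B\Morita R(A)$, so $B$ is a Ringel dual of $A$.

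The principal obstacle is securing the two ingredients the above chase relies on: the biduality $R\circ R\simeq \operatorname{id}$ up to quasi-hereditary Morita equivalence, and the functoriality of Ringel duality under highest-weight Morita equivalences. Both are delicate in the integral setting because characteristic tilting modules need not be unique, $\mathcal{F}(\Stsim)$ is not Krull--Schmidt in general, and transported characteristic tilting modules must still be recognised as such. Once these facts are in place---as they are earlier in the paper---the corollary collapses to the diagram chase above together with a single application of Theorem \ref{standardsgivesthewholealgebra}.
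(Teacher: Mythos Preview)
Your converse direction is correct and essentially identical to the paper's argument: compose the given equivalence with the Ringel duality equivalence $\mathcal{F}(\tilde{\mho}_B)\simeq\mathcal{F}(\Stsim_{R(B)})$ from Lemma~\ref{functorringeldualpart1}, invoke Theorem~\ref{standardsgivesthewholealgebra} to get $A\m\simeq R(B)\m$ as highest weight categories, and then use biduality (Proposition~\ref{ringeldualsplitqhmorita}).

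The forward direction, however, contains a genuine error. You assert that $\Hom_A(T,-)$ restricts to an exact equivalence $\mathcal{F}(\Stsim)\to\mathcal{F}(\tilde{\mho})$. This is not what Lemma~\ref{functorringeldualpart1} says: the functor $G=\Hom_A(T,-)$ is an exact equivalence $\mathcal{F}(\Cssim_A)\to\mathcal{F}(\Stsim_B)$, sending costandards of $A$ to standards of $B$. On $\mathcal{F}(\Stsim_A)$ the functor $\Hom_A(T,-)$ is not even exact in general, since $\Ext_A^1(T,\St(\lambda))$ need not vanish; and its image is certainly not $\mathcal{F}(\Cssim_B)$. The paper's forward direction instead applies Lemma~\ref{functorringeldualpart1} to $R(A)$ rather than to $A$: one gets $\mathcal{F}(\tilde{\mho}_B)=\mathcal{F}(\Cssim_{R(A)})\simeq\mathcal{F}(\Stsim_{R(R(A))})$, and then biduality (Proposition~\ref{ringeldualsplitqhmorita}) gives $\mathcal{F}(\Stsim_{R(R(A))})\simeq\mathcal{F}(\Stsim_A)$. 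So biduality is needed in \emph{both} directions, not only the converse; your remark that the forward implication is ``essentially a restatement'' understates what is required.
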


	The structure of this paper is as follows: 
	We will start in Section \ref{Existence of costandard modules} by looking at split quasi-hereditary algebras from the point of view of costandard modules and the exact category of modules having a finite filtration with factors in the additive closure of costandard modules, $\mathcal{F}(\Cssim)$. In particular, we present in Theorem \ref{quasihereditaryintermsofcostandards} an alternative characterization of split quasi-hereditary algebras using costandard modules. In Section \ref{On Ext-projective and Ext-injective objects of}, we see that $\mathcal{F}(\Stsim)$ and $\mathcal{F}(\Cssim)$ completely determine each other by forming an Ext-orthogonal pair in $A\m\cap R\proj$. In Subsection \ref{Tilting modules}, a characteristic tilting module is constructed so that it belongs both in $\mathcal{F}(\Stsim)$ and in $\mathcal{F}(\Cssim)$ while at the same time every $M\in \mathcal{F}(\Stsim)$ admits a coresolution by modules in the additive closure of a module $T\in \mathcal{F}(\Stsim)\cap \mathcal{F}(\Cssim)$. We finish Subsection \ref{Tilting modules} by showing that although no uniqueness of basic characteristic tilting modules can be expected, distinct characteristic tilting modules have the same additive closure. In Section \ref{Change of rings and filtrations of}, we study the costandard modules and characteristic tilting modules under change of ground ring. In particular, endomorphism algebras of modules in the additive closure of a characteristic tilting module have a base change property. 
	In Section \ref{Structure of split quasi-hereditary algebras}, we establish that exact equivalences between exact subcategories of  modules having a finite filtration by summands of direct sum of copies of standard modules over split highest weight categories can be lifted to exact equivalences between the ambient split highest weight categories. In Section \ref{Ringel duality}, we establish that split quasi-hereditary algebras also occur in pairs in the integral setup. Further, we prove that checking Ringel self-duality for split quasi-hereditary algebras over local commutative Noetherian rings can be reduced to checking Ringel self-duality in the finite-dimensional case. We conclude Section \ref{Ringel duality} by providing a brief theoretical argument that many integral cellular structures arise as endomorphism algebras of direct summands of characteristic tilting modules. 
	
	\section{Preliminaries}
	
	In the following, we are going to present the terminology and notation to be used throughout the paper. In particular, we will follow the same terminology and notation presented in \cite{cruz2021cellular, p2paper}, and hence for undefined terminology or notation we refer the reader to \cite{cruz2021cellular, p2paper}. Let $R$ be a commutative Noetherian ring with identity. We will denote by $\MaxSpec R$ the set of all maximal ideals of $R$. We say that an $R$-algebra $A$ is a \textbf{projective Noetherian $R$-algebra} if the regular module $A$ is finitely generated and projective as $R$-module. We will denote by $A\m$ the category of finitely generated left $A$-modules and by $A\proj$ the full subcategory of $A\m$ whose modules are projective. Given $M\in A\m$, we denote by $\add_A M$ the additive closure of $M$. By \textbf{generator} (over $A$), we mean a module $M\in A\m$ whose additive closure contains the regular module $A$. We say that a module is a \textbf{progenerator} if it is both a projective module and a generator. Given an $R$-algebra, we will denote by $A^{op}$ the opposite algebra of $A$ and by $D$ the standard duality functor $D=\Hom_R(-, R)\colon A\m\rightarrow A^{op}\m$. We say that an exact sequence of $A$-modules is \textbf{$(A, R)$-exact} if it splits as exact sequence of $R$-modules. An \textbf{$(A, R)$-monomorphism} is a monomorphism of $A$-modules which splits as monomorphism of $R$-modules.   We say that a module $M$ in $A\m\cap R\proj$ is \textbf{$(A, R)$-injective} if $M\in \add DA$. 
	Given $\mi\in \MaxSpec R$ and $M\in A\m$, we will denote by $M_\mi$ the localisation of $M$ at the maximal ideal $\mi$. Further, we will write $M(\mi)$ to denote the module $M/\mi M\simeq R(\mi)\otimes_R M$, where $R(\mi)$ denotes the field $R/\mi$. Similarly, given an $R$-algebra $A$, we can consider the algebras $A_\mi$ and $A(\mi)$.
		
	A module $M$ over $R$ is called \textbf{invertible} if there exists an $R$-module $N$ such that $M\otimes_R N\simeq R$. The collection of all invertible $R$-modules forms a group known as the \textbf{Picard group} of $R$ which we will denote it by $Pic(R)$.

	\begin{Remark}\label{invertiblemodulesresidue}
		Let $M\in R\proj$. $M$ is an invertible $R$-module if and only if $M(\mi)\simeq R(\mi)$ for every $\mi\in \MaxSpec R$. In fact, for all $\mi\in \MaxSpec R$ there exists $n_\mi\in \mathbb{N}\cup \{ 0\}$ so that $M_\mi\simeq R_\mi^{n_\mi}$ and so $R(\mi)\simeq M_\mi(\mi)\simeq R(\mi)^{n_\mi}$ for all $\mi\in \MaxSpec R$. 
	\end{Remark}

	Recall the following fact that covariant Hom functors preserve finite direct sums, and hence commutes with  tensoring with a projective module over the ground ring.
	
	\begin{Lemma}\label{tensorprojcommutingonHom}
Let $A$	be a projective Noetherian $R$-algebra.	Let $M, N\in A\m$ and $U\in R\proj$. Then, the $R$-homomorphism $$\varsigma_{M, N, U}\colon \Hom_A(M, N)\otimes_R U\rightarrow \Hom_A(M, N\otimes_R U),$$ given by ${g\otimes u\mapsto g(-)\otimes u}$ is an $R$-isomorphism.
	\end{Lemma}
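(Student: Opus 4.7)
The plan is to reduce the claim to the case $U=R$ by viewing $\varsigma_{M,N,-}$ as a natural transformation between two additive functors from $R\proj$ to $R\m$, and then using that any $U \in R\proj$ is a direct summand of a finitely generated free $R$-module.

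First, I would check naturality of $\varsigma_{M,N,-}$ in $U$: for any $R$-linear map $\phi\colon U \to U'$, the defining formula $g\otimes u \mapsto g(-)\otimes u$ immediately makes the evident square commute. Next, I would verify that both functors $U \mapsto \Hom_A(M,N)\otimes_R U$ and $U \mapsto \Hom_A(M, N\otimes_R U)$ are additive: the former by distributivity of tensor product over finite direct sums, and the latter because the canonical isomorphism $N\otimes_R(U\oplus U') \cong (N\otimes_R U) \oplus (N\otimes_R U')$ is $A$-linear and $\Hom_A(M,-)$ preserves finite direct sums. For $U=R$, the canonical identifications $X\otimes_R R \cong X$ turn $\varsigma_{M,N,R}$ into the identity on $\Hom_A(M,N)$, hence an isomorphism; combined with additivity this shows that $\varsigma_{M,N,R^n}$ is an isomorphism for every $n \in \mathbb{N}$.

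Finally, since $U \in R\proj$ is finitely generated projective, I would fix $R$-linear maps $i\colon U \to R^n$ and $\pi\colon R^n \to U$ with $\pi \circ i = \id_U$. By naturality, an inverse of $\varsigma_{M,N,U}$ is then obtained as the composition of the induced map $\Hom_A(M, N\otimes_R U) \to \Hom_A(M, N\otimes_R R^n)$, the inverse of $\varsigma_{M,N,R^n}$, and the map $\Hom_A(M,N)\otimes_R R^n \to \Hom_A(M,N) \otimes_R U$; a short diagram chase using the naturality squares for $i$ and $\pi$ together with $\pi \circ i = \id_U$ establishes that this is a two-sided inverse. The argument is essentially routine and no serious obstacle arises; the only point requiring some care is the bookkeeping check that the various canonical identifications are compatible with both the $A$-module structure on $N\otimes_R U$ and the defining formula of $\varsigma$.
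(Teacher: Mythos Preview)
Your proposal is correct and follows essentially the same approach as the paper: both reduce to the case $U=R$ via the compatibility of $\varsigma_{M,N,-}$ with finite direct sums, then check that $\varsigma_{M,N,R}$ is an isomorphism using the canonical identification $X\otimes_R R\simeq X$. The only difference is packaging: the paper records the direct-sum compatibility as a single commutative diagram and leaves the retract argument implicit, whereas you phrase it in terms of naturality of $\varsigma_{M,N,-}$ and spell out the splitting $U\hookrightarrow R^n\twoheadrightarrow U$ explicitly.
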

	\begin{proof}
		Since for all modules $U_1, U_2\in R\m$ there are commutative diagrams
		\begin{equation}
			\begin{tikzcd}
				\Hom_A(M, N)\otimes_R U_1\oplus \Hom_A(M, N)\otimes_RU_2\arrow[r, "\varsigma_{M, N, U_1}\oplus \varsigma_{M, N, U_2}", outer sep=0.75ex, swap] \arrow[d, "\simeq"] & \Hom_A(M, N\otimes_R U_1)\oplus \Hom_A(M, N\otimes_R U_2) \arrow[d, "\simeq"] \\
				\Hom_A(M, N)\otimes_R (U_1\oplus U_2)\arrow[r, "\varsigma_{M, N, U_1\oplus U_2}"] & \Hom_A(M, N\otimes_R (U_1\oplus U_2))
			\end{tikzcd},
		\end{equation}
		it is enough to show that $\varsigma_{M, N, R}$ is an $R$-isomorphism. But, this isomorphism is obtained by regarding $\varsigma_{M, N, R}$ in the following commutative diagram
		\begin{equation}
			\begin{tikzcd}
				\Hom_A(M, N)\otimes_R R\arrow[r, "\varsigma_{M, N, R}"] \arrow[d, "\mu_{\Hom_A(M{,}N)}"]& \Hom_A(M, N\otimes_R R)\arrow[d, "\Hom_A(M {,}\mu_N)"] \\
				\Hom_A(M, N)\arrow[r, equal] & \Hom_A(M, N)
			\end{tikzcd},
		\end{equation}where $\mu_X$ denotes the multiplication map for any $R$-module $X$.
		In fact, for all $f\in \Hom_A(M, N)$,
		\begin{align}
			\Hom_A(M, \mu_N)\circ \varsigma_{M, N, R}(f\otimes 1_R)(m)=\mu_N\circ \varsigma_{M, N, R}(f\otimes 1_R)(m)=\mu_N(f(m)\otimes_R 1_R)=f(m), \ m\in M.\nonumber
		\end{align}
		Hence, $\varsigma_{M, N, R}$ is an $R$-isomorphism.
	\end{proof}

Split quasi-hereditary algebras over Noetherian commutative rings were introduced in \cite{CLINE1990126}. We recall now the module-theoretical definition of split quasi-hereditary algebras given in \cite{Rouquier2008}.

\begin{Def}\label{qhdef}
	Given a projective Noetherian $R$-algebra $A$ and a collection of finitely generated left $A$-modules $\{\St(\l)\colon \l\in \L\}$ indexed by a  poset $\L$, we say that $(A, \{\Delta(\lambda)_{\lambda\in \Lambda}\})$ is a \textbf{split quasi-hereditary $R$-algebra} if the following conditions hold:
	\begin{enumerate}[(i)]
		\item The modules $\St(\l)$ belong to $A\m\cap R\proj$.
		\item Given $\l, \mu\in \L$, if $\Hom_A(\St(\l), \St(\mu))\neq 0$, then $\l\leq\mu$.
		\item $\End_A(\St(\l))\simeq R$, for all $\l\in\L$.
		\item Given $\l\in\L$, there is $P(\l)\in A\proj$ and an exact sequence $0\rightarrow C(\l)\rightarrow P(\l)\rightarrow \St(\l)\rightarrow 0$ such that $C(\l)$ has a finite filtration by modules of the form $\St(\mu)\otimes_R U_\mu$ with $U_\mu\in R\proj$ and $\mu>\l$. 
		\item $\add_A \left( \sumSt P(\l)\right)= A\proj$.
	\end{enumerate}
\end{Def} It is common also to say that under these conditions $(A\m, \{\Delta(\lambda)_{\lambda\in \Lambda}\})$ is a \textbf{split highest weight category.}

 Let $(A, \{\Delta(\lambda)_{\lambda\in \Lambda}\})$ be a split quasi-hereditary algebra. 
We will write $\St$ to denote the set ${\{\St(\l)\colon \l\in \L \}}$ and $\Stsim$ to denote the set ${\{\St(\l)\otimes_R U_\l\colon \l\in \L, U_\l\in R\proj \}}$. Given a subposet $\L'\subset \L$ we will denote by $\Stsim_{\L\setminus \L'}$ the set $\{\St(\l)\otimes_R U_\l\colon \l\in \L\setminus \L', \ U_\l\in R\proj \}$. By $\Stsim_{\mu>\l}$ we will denote the set \linebreak${\{\St(\mu)\otimes_R U_\mu\colon \mu\in \L, \ \mu>\l, U_\mu\in R\proj \}}$. Similarly, we will use the notations $\Stsim_{\mu\neq \l}$, $\Stsim_{\mu<\l}$, and the respective variants replacing $\Stsim$ with $\St$. For each $\mi\in \MaxSpec R$, by $\St(\mi)$ we mean the set ${\{\St(\l)(\mi) \colon \l\in \L \}}$. Given a set of $A$-modules $\theta$, we will denote by $\mathcal{F}(\theta)$ the full subcategory of $A\m$ whose modules admit a filtration $0=M_{t+1}\subset M_{t}\subset \cdots \subset M_1=M$ with $M_i/M_{i+1}\in \theta$, $1\leq i\leq t$, for some natural number $t$.

	By $\mathcal{M}(A)$ we denote the set of isomorphism classes of projective $R$-split $A$-modules. Here by a \textbf{projective $R$-split $A$-module} we mean a projective $A$-module $L$ being a progenerator as $R$-module so that for each $M\in A\proj$ the canonical morphism 
\begin{align}
	\tau_{L, M}\colon L\otimes_R \Hom_A(L, M)\rightarrow M, \quad l\otimes f\mapsto f(l),
\end{align}is an $(A, R)$-monomorphism. 
Projective $R$-split $A$-modules are useful objects that allow many properties of split quasi-hereditary algebras to be proved using induction methods. Indeed, given a projective Noetherian $R$-algebra $A$ and a collection of finitely generated left $A$-modules $\{\St(\l)\colon \l\in \L\}$ indexed by a  poset $\L$ with maximal element $\alpha$, $(A, \{\Delta(\lambda)_{\lambda\in \Lambda}\})$  is a split quasi-hereditary algebra if and only if $\Delta(\alpha)\in \mathcal{M}(A)$ and $(A/J, \{\Delta(\lambda)_{\lambda\in \Lambda\setminus \{\alpha\}}\})$ is a split quasi-hereditary $R$-algebra with $J=\im \tau_{\St(\alpha)}$ (see \citep[Lemma 4.12]{Rouquier2008} or \citep[Lemma B.0.4]{cruz2021cellular}). The ideal $J$ in this setup is called \textbf{split heredity ideal} and all split heredity ideals arise in this way (see for example \citep[Proposition A.2.5]{cruz2021cellular}).

Split quasi-hereditary algebras can also be defined through the existence of split heredity chains.  A chain of ideals $0\subset J_t\subset J_{t-1}\subset \cdots \subset J_1\subset A$ is called a split heredity chain if each ideal $J_i/J_{i+1}$ is a split heredity ideal in $A/J_{i+1}$ for $1\leq i\leq t$. Then, a projective Noetherian $R$-algebra $A$ is split quasi-hereditary if and only if it admits a split heredity chain (see for example \citep[Theorem 4.6]{Rouquier2008} or \citep[Theorem 3.3.4]{cruz2021cellular}). The following remark is quite useful to relate tensor products  of modules over a split quasi-hereditary algebra and over a quotient of a split quasi-hereditary algebra.

\begin{Remark}Let $A$	be a projective Noetherian $R$-algebra and let $J$ be an ideal of $A$. \label{remarkfullsubcategorytensor}
	If $Y\in A/J\m$ and $X\in (A/J)^{op}\m$ so that $X\otimes_A Y, X\otimes_{A/J} Y\in R\proj$, then $X\otimes_A Y\simeq X\otimes_{A/J} Y.$ In fact, $D(X\otimes_A Y)\simeq \Hom_A(Y, DX)\simeq \Hom_{A/J}(Y, DX)\simeq D(X\otimes_{A/J} Y)$ (see also \citep[Proposition 2.1]{CRUZ2022410}).
\end{Remark}

%	\section{Interactions between $\mathcal{F}(\Stsim)$, $\mathcal{F}(\Cssim)$ and $\mathcal{F}(\Stsim)\cap \mathcal{F}(\Cssim)$}
	\section{Existence of costandard modules}\label{Existence of costandard modules}
	In this section, we present an equivalent definition of split quasi-hereditary algebras over commutative Noetherian rings $R$ using costandard modules and $(A, R)$-injective modules.
	\begin{Prop}\label{existenceofcostandardmodules}
		
		Let $(A, \{\Delta(\lambda)_{\lambda\in \Lambda}\})$ be a split quasi-hereditary algebra. Then, there is a set $\{\Cs(\l) \}_{\l\in\L}$ of $A$-modules, unique up to isomorphism, with the following properties:
		
		\begin{enumerate}[(i)]
			\item $(A^{op}, \{D\Cs(\lambda)_{\lambda\in \Lambda}\})$ is a split quasi-hereditary algebra.
			\item Given $\l, \beta\in \L$, then $\Ext_A^i(\St(\l), \Cs(\beta))=\begin{cases}
				R \ \text{ if } i=0 \text{ and } \lambda=\beta\\ 0 \text{ otherwise}
			\end{cases}.$
		\end{enumerate} 
	\end{Prop}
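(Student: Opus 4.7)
The plan is to construct the costandard modules by dualising the standard modules of the opposite algebra. Since $A^{op}$ is itself split quasi-hereditary with the same poset $\Lambda$ (as noted in the introduction, this class is closed under taking opposites), fix a set of standard modules $\{\Delta^{op}(\lambda):\lambda\in\Lambda\}$ for $A^{op}$ and define
\[\Cs(\lambda):=D\Delta^{op}(\lambda)\in A\m\cap R\proj.\]
Property (i) is then tautological: since $\Delta^{op}(\lambda)\in R\proj$, biduality gives $D\Cs(\lambda)\simeq\Delta^{op}(\lambda)$, so $(A^{op},\{D\Cs(\lambda)\})$ is exactly the chosen split quasi-hereditary structure on $A^{op}$.

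For property (ii), I would use the natural isomorphism
\[\Ext_A^i(\St(\lambda),\Cs(\beta))\simeq D\Tor_i^A(\Delta^{op}(\beta),\St(\lambda)),\]
obtained by applying $D$ to $\Delta^{op}(\beta)\otimes_A P^\bullet$ for a finitely generated projective resolution $P^\bullet\to\St(\lambda)$ (the hypothesis $\Delta^{op}(\beta)\in R\proj$ ensures that $\Delta^{op}(\beta)\otimes_A P^j\in R\proj$, so that $D$ is exact on the resulting complex). The task thus reduces to computing $\Tor_i^A(\Delta^{op}(\beta),\St(\lambda))$. I would proceed by induction on $|\Lambda|$ using the split heredity chain: pick a maximal element $\alpha\in\Lambda$ and set $J=\im\tau_{\St(\alpha)}$. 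When $\alpha\in\{\lambda,\beta\}$, the projective $R$-split structure of $\St(\alpha)$ (respectively $\Delta^{op}(\alpha)$) together with Proposition \ref{tensorproductstandcostand} yields the required Tor groups directly. When $\lambda,\beta\neq\alpha$, both modules are annihilated by $J$, and a dimension-shift along $0\to J\to A\to A/J\to 0$, combined with Remark \ref{remarkfullsubcategorytensor}, compares $\Tor^A$ with $\Tor^{A/J}$; the latter is handled by the inductive hypothesis on $(A/J,\{\St(\lambda)\}_{\lambda\neq\alpha})$.

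For uniqueness, let $\{\Cs'(\lambda)\}$ be another collection satisfying (i) and (ii). By (i), $\{D\Cs'(\lambda)\}$ is a set of standard modules for a split quasi-hereditary structure on $A^{op}$. Since standard modules of a split quasi-hereditary algebra with a fixed poset are determined up to isomorphism by the iterative construction via $\mathcal{M}(A^{op})$ and split heredity ideals (the isomorphism class being independent of auxiliary choices), we obtain $D\Cs'(\lambda)\simeq\Delta^{op}(\lambda)\simeq D\Cs(\lambda)$, and dualising again yields $\Cs'(\lambda)\simeq\Cs(\lambda)$.

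The main obstacle is the inductive step of the Tor computation when neither index equals the maximal element $\alpha$. Although both modules descend to $A/J$, the passage between $\Tor^A$ and $\Tor^{A/J}$ is not automatic and requires exploiting that $J$ is filtered by modules of the form $\St(\alpha)\otimes_R U$ with $U\in R\proj$, together with verifying that these summands contribute trivially to the relevant Tor groups against modules indexed by $\Lambda\setminus\{\alpha\}$.
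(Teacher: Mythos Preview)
Your construction has a genuine gap at the very first step. Defining $\nabla(\lambda):=D\Delta^{op}(\lambda)$ for an arbitrary choice of standard modules $\Delta^{op}(\lambda)$ on $A^{op}$ does satisfy (i), but it does \emph{not} in general satisfy the case $i=0$, $\lambda=\beta$ of (ii). What one can actually prove is only that
\[
U_\lambda:=\Hom_A(\St(\lambda),D\Delta^{op}(\lambda))\simeq D\bigl(\Delta^{op}(\lambda)\otimes_A\St(\lambda)\bigr)
\]
is an \emph{invertible} $R$-module, not that it is $R$. Over a ground ring with nontrivial Picard group these are different, and nothing in your inductive Tor computation pins this down: at the base of the induction you assert that the projective $R$-split structure of $\St(\alpha)$ ``yields the required Tor groups directly'', but all it yields is $\Delta^{op}(\alpha)\otimes_A\St(\alpha)\in R\proj$, not that this module is free of rank one. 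The paper resolves this by twisting, setting $\nabla(\lambda):=DU_\lambda\otimes_R D\Delta^{op}(\lambda)$, and then carries out a nontrivial computation (using the description of the heredity layer $J_{i_\lambda}/J_{i_\lambda+1}$ and repeated Tensor--Hom adjunction) to check that $\Hom_A(\St(\lambda),\nabla(\lambda))\simeq R$. This twist is the heart of the existence argument and is absent from yours.

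Two secondary points. First, your appeal to Proposition~\ref{tensorproductstandcostand} is circular: that result is proved later and its proof relies on the costandard modules whose existence you are trying to establish. Second, your uniqueness argument claims that standard modules of a split quasi-hereditary algebra with fixed poset are determined up to isomorphism, but this is false when $Pic(R)$ is nontrivial: if $\{\Delta^{op}(\lambda)\}$ is a set of standard modules for $A^{op}$, then so is $\{\Delta^{op}(\lambda)\otimes_R L_\lambda\}$ for any $L_\lambda\in Pic(R)$. The actual uniqueness (for which the paper refers to Rouquier) is forced by property (ii) itself, not by any intrinsic uniqueness of standard modules on $A^{op}$.
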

	\begin{proof}
		Let $(A, \{\Delta(\lambda)_{\lambda\in \Lambda}\})$ be a split quasi-hereditary algebra and let $\L\rightarrow \{1, \ldots, t\}$, $\l\mapsto i_\l$ be an increasing bijection. $A$ is split quasi-hereditary with some heredity chain $0=J_{t+1}\subset J_t\subset \cdots \subset J_1=A$ (see for example \citep[Theorem 3.3.4]{cruz2021cellular} or \citep[Theorem 4.16]{Rouquier2008}).  By \citep[Theorem 3.3.8]{cruz2021cellular}, $A^{op}$ is split quasi-hereditary with split heredity chain $0=J^{op}_{t+1}\subset J_t^{op}\subset \cdots \subset J_1^{op}=A^{op}$. Again by \citep[Theorem 4.16]{Rouquier2008}, $(A^{op}\m, \{\Delta^*(\lambda)_{\lambda\in \Lambda}\})$ is a split highest weight category. 
		
		Let $\l, \beta\in \L$.	Assume $\beta\not>\l$.  As $\St^*(\l)\in \mathcal{M}(A^{op}/J^{op}_{i_\l+1})$, we obtain that $\St^*(\beta)\in  A^{op}/J^{op}_{i_\l}\m$ because of $A^{op}/J^{op}_{i_\l+1}/J^{op}_{i_\l}/J^{op}_{i_\l+1}\m\simeq A^{op}/J^{op}_{i_\l}\m$. Thus, $D\St^*(\beta)\in A/J_{i_\l}\m$. Since $\St(\l)\in \mathcal{M}(A/J_{i_\l+1})$ we obtain that $\Ext_A^{i\geq0}(\St(\l), D\St^*(\beta))=0$ and $\Ext_A^{i>0}(\St(\l), D\St^*(\l))=0$.
		By symmetry,  $\Ext_{A^{op}}^{i\geq 0}(\St^*(\l), D\St(\beta))=0$.
		By \citep[Proposition 2.2, Lemma 2.15]{CRUZ2022410}, if $\l\neq \beta$, then
		\begin{align}
			\Ext_A^{i\geq0}(\St(\l), D\St^*(\beta))\simeq \Ext_{A^{op}}^{i\geq0}(\St^*(\beta), D\St(\l))=0.
		\end{align}
		
		The $R$-module $U_\l=\Hom_A(\St(\l), D\St^*(\l))$ is invertible over $R$ (see for example \citep[proof of Proposition 4.19]{Rouquier2008}).  
		For each $\l\in \L$, define	$\Cs(\l)=DU_\l\otimes_R D\St^*(\l)$. Observe that $\Cs(\l)\in \add_A D\St^*(\l)$. It follows that $\Ext_A^{i\geq 0}(\St(\l), \Cs(\beta))=\Ext_A^{i>0}(\St(\l), \Cs(\l))=0$ if $\l\neq \beta$. For (ii), it remains to show that $\Hom_A(\St(\l), \Cs(\l))\simeq R$ for every $\l\in \L$.

		Recall that $\End_R(\St^*(\l))\simeq \Hom_{A}(J_{i_\l}/J_{i_\l +1}, A/J_{i_\l +1})$ (see \citep[Remark A.1.5]{cruz2021cellular}).	By Tensor-Hom adjunction, $U_\l\simeq D(\St^*(\l)\otimes_A \St(\l))$ and recall Remark \ref{remarkfullsubcategorytensor}  with $\Cs(\l)\in R\proj$ and $\St(\l)\in A/J_{i_\l+1}\proj$.
		Thus combining all these facts with \citep[Lemma 2.1.3]{p2paper} we obtain that
		\begin{align*}
			\Hom_A(\St(\l), \Cs(\l))&\simeq \Hom_{A}(\St(\l), \St^*(\l)\otimes_A \St(\l)\otimes_R D\St^*(\l))\\&\simeq \Hom_A(\St(\l), \Hom_R(\St^*(\l), R)\otimes_R \St^*(\l)\otimes_A \St(\l))\\&\simeq  \Hom_A(\St(\l), \Hom_R(\St^*(\l), \St^*(\l)) \otimes_A \St(\l))\\
			&\simeq \Hom_A(\St(\l), \Hom_{A/J_{i_\l +1}}(J_{i_\l}/ J_{i_\l +1}, A/ J_{i_\l +1})\otimes_{A/ J_{i_\l +1}} \St(\l))\\
			&\simeq \Hom_A(\St(\l), \Hom_{A}(J_{i_\l}/ J_{i_\l +1}, \St(\l)))\simeq  \Hom_A(J_{i_\l}/J_{i_\l +1}\otimes_A \St(\l), \St(\l))\\&\simeq \Hom_A(\St(\l)\otimes_R \Hom_A(\St(\l), A/J_{i_\l +1})\otimes_{A/J_{i_\l +1}} \St(\l), \St(\l))\\&\simeq \Hom_A(\St(\l)\otimes_R \Hom_A(\St(\l), \St(\l)), \St(\l) )\simeq \End_A(\St(\l))\simeq R.
		\end{align*}
		
		By Tensor-Hom adjunction and $\St^*(\l)\in R\proj$, 
		\begin{align*}
			D\Cs(\l)\simeq D(DU_\l\otimes_R D\St^*(\l))\simeq \Hom_R(D\St^*(\l), U_\l ) \simeq DD\St^*(\l)\otimes_R U_\l\simeq \St^*(\l)\otimes_R U_\l.
		\end{align*}Hence, $D\Cs(\l)_\mi\simeq \St^*(\l)_\mi$ for every $\mi\in \MaxSpec R$. By Theorem 3.1.3 of \cite{cruz2021cellular}, (i) follows.
		
		For the uniqueness part, we refer to \citep[proof of Proposition 4.19]{Rouquier2008}). 
	\end{proof} 
	From now on, we will denote by $\St_{op}(\l)$, $\l\in \L$, the standard modules of the opposite algebra of $(A, \{\Delta(\lambda)_{\lambda\in \Lambda}\})$.
	
	\begin{Cor}\label{dualofstandardiscostandard}
		Let $(A, \{\Delta(\lambda)_{\lambda\in \Lambda}\})$ be a split quasi-hereditary algebra. Then, $\{D\St(\l)\colon \l\in\L \}$ are costandard modules in $A^{op}$.
	\end{Cor}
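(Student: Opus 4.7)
The plan is to apply the uniqueness-up-to-isomorphism clause of Proposition \ref{existenceofcostandardmodules} to the opposite algebra $A^{op}$: it suffices to check that $\{D\St(\l)\}_{\l \in \L}$ satisfies properties (i) and (ii) of that proposition with $A$ and $A^{op}$ interchanged, and then this family must coincide (up to isomorphism) with the costandard modules of $A^{op}$.

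For property (i) applied to $A^{op}$, I need $((A^{op})^{op}, \{DD\St(\l)\}) = (A, \{DD\St(\l)\})$ to be split quasi-hereditary. Since $\St(\l) \in A\m \cap R\proj$, the canonical biduality morphism gives $DD\St(\l) \simeq \St(\l)$, so this reduces to the hypothesis. For property (ii), I need $\Ext_{A^{op}}^i(\St_{op}(\l), D\St(\beta))$ to vanish outside $\{i = 0, \l = \beta\}$ and to be $R$ in that case. Using the duality isomorphism $\Ext_{A^{op}}^i(\St_{op}(\l), D\St(\beta)) \simeq \Ext_A^i(\St(\beta), D\St_{op}(\l))$ from \citep[Proposition 2.2, Lemma 2.15]{CRUZ2022410}, the vanishing for $\l \neq \beta$ in all degrees and for $\l = \beta$ in positive degrees follows directly from the calculations already carried out in the proof of Proposition \ref{existenceofcostandardmodules}.

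The only remaining — and delicate — case is $\l = \beta$, $i = 0$. There the proof of Proposition \ref{existenceofcostandardmodules} identifies this Hom space with the invertible $R$-module $U_\l \simeq D(\St_{op}(\l) \otimes_A \St(\l))$, so the main obstacle is to verify $U_\l \simeq R$. This will be extracted from the chain of isomorphisms already present in that proof, whose key reduction is $\St_{op}(\l) \otimes_A \St(\l) \simeq \End_A(\St(\l)) \simeq R$ — an instance of the Morita-context-like structure enjoyed by modules $\St(\l) \in \mathcal{M}(A/J_{i_\l+1})$ together with the identification $\St_{op}(\l) \simeq \Hom_A(\St(\l), A/J_{i_\l+1})$ given by \citep[Remark A.1.5]{cruz2021cellular}. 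Once $U_\l \simeq R$, property (ii) is confirmed and the corollary follows from the uniqueness clause of Proposition \ref{existenceofcostandardmodules}.
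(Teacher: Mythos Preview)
Your overall strategy --- verify conditions (i) and (ii) of Proposition~\ref{existenceofcostandardmodules} for the family $\{D\St(\l)\}$ over $A^{op}$ and then invoke uniqueness --- is exactly the paper's. The divergence is in the case $i=0$, $\l=\beta$, and there you have taken an unnecessary detour with a genuine gap.

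The paper exploits that, by Proposition~\ref{existenceofcostandardmodules}(i), the standard modules of $A^{op}$ may be taken to be $D\Cs(\l)$. Then $D\St_{op}(\l)\simeq DD\Cs(\l)\simeq \Cs(\l)$, and the duality isomorphism reads
\[
\Ext_{A^{op}}^i(D\Cs(\l), D\St(\beta))\;\simeq\;\Ext_A^i(\St(\beta), \Cs(\l)),
\]
which is precisely the quantity already computed in Proposition~\ref{existenceofcostandardmodules}(ii). No separate treatment of the $i=0$ case is required.

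You instead identify $\St_{op}(\l)$ with the modules $\St^*(\l)$ coming from the opposite heredity chain, which lands you on $U_\l$ and forces you to prove $U_\l\simeq R$. But the proof of Proposition~\ref{existenceofcostandardmodules} only shows that $U_\l$ is \emph{invertible}, and then \emph{defines} $\Cs(\l)=DU_\l\otimes_R D\St^*(\l)$ precisely to cancel this twist; it never establishes $U_\l\simeq R$. Your argument for $U_\l\simeq R$ hinges on the identification $\St^*(\l)\simeq \Hom_A(\St(\l), A/J_{i_\l+1})$, which you attribute to \citep[Remark~A.1.5]{cruz2021cellular}; however, in the paper that reference is used for the different isomorphism $\End_R(\St^*(\l))\simeq \Hom_A(J_{i_\l}/J_{i_\l+1}, A/J_{i_\l+1})$. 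The chain of isomorphisms in the proof of Proposition~\ref{existenceofcostandardmodules} does show $\Hom_A(\St(\l), A/J_{i_\l+1})\otimes_{A/J_{i_\l+1}}\St(\l)\simeq R$, but not $\St^*(\l)\otimes_A\St(\l)\simeq R$. In general two modules in $\mathcal{M}((A/J_{i_\l+1})^{op})$ cutting out the same split heredity ideal can differ by a twist by an element of $Pic(R)$, so without pinning down which $\St^*(\l)$ is produced by \citep[Theorem~4.16]{Rouquier2008} your identification is unjustified. The entire issue evaporates once you use $\St_{op}(\l)=D\Cs(\l)$, as the paper does.
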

	\begin{proof}
		Note that $\left( (A^{op})^{op}\m, \{DD\St(\l)_{\l\in\L}\}\right)=(A, \{\St(\l)_{\l\in\L}\}) $ is a split highest weight category. By  \citep[Proposition 2.2, Lemma 2.15]{CRUZ2022410}, for any $\l, \beta\in \L$,  \begin{align}
			\Ext_{A^{op}}^i(D\Cs(\l), D\St(\beta))\simeq \Ext_A^i(\St(\beta), \Cs(\l))=\begin{cases}
				R \text{ if } \l=\beta, \ i=0\\
				0 \text{ otherwise}.
			\end{cases} 
		\end{align}
		By the uniqueness of costandard modules in Proposition \ref{existenceofcostandardmodules}, the result follows.
	\end{proof}
	
	\begin{Remark}\label{nakayamaonstandardmaximal}
		If $\l\in \L$ is maximal, then $D\Hom_A(\St(\l), A)\simeq \Cs(\l)$. In fact, $\Hom_A(\St(\l), A)\in \mathcal{M}(A^{op})$ by Lemma A.1.8 of \cite{cruz2021cellular} and \begin{align}
			\Hom_A(\St(\l), D\Hom_A(\St(\l), A))\simeq \Hom_{A^{op}}(\Hom_A(\St(\l), A), D\St(\l))\simeq D\St(\l)\otimes_A \St(\l)\simeq R.
			\tag*{\qedhere}	\end{align}
	\end{Remark}
	
	\begin{Prop}\label{relativeinjectiveshavefiltrations}
		Let $(A, \{\Delta(\lambda)_{\lambda\in \Lambda}\})$ be a split quasi-hereditary algebra. Let $M\in A\m$ such that $M$ is $(A, R)$-injective and projective over $R$. Let $\L\rightarrow \{1, \ldots, t\}$, $\l\mapsto i_\l$ be an increasing bijection and set $\St_{i_\l}:=\St(\l)$. Then, there exists a filtration $
		0\subset I_1\subset \cdots \subset I_n=M,$ with $ I_i/I_{i-1}\simeq U_i\otimes_R \Cs_i, $ for some $ U_i\in R\proj.$ Furthermore, 
		\begin{enumerate}[(a)]
			\item If $\Ext_A^1(\Cs(\alpha), \Cs(\beta))\neq 0$, then $\alpha>\beta$.
			\item If $\Ext_A^i(\Cs(\alpha), \Cs(\beta))\neq 0$ for some $i>0$, then $\alpha>\beta$. In particular, $\Ext_A^i(\Cs(\alpha), \Cs(\alpha))=0, \ i>0$.
		\end{enumerate}
	\end{Prop}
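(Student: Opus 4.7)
My plan is to first establish (a) and (b) via a duality argument, then prove the main filtration claim by induction on $|\L|$, peeling off a maximal weight.

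For (a) and (b), since both $\Cs(\alpha)$ and $D\Cs(\beta)\simeq U_\beta\otimes_R\St_{op}(\beta)$ are $R$-projective, \citep[Proposition~2.2, Lemma~2.15]{CRUZ2022410} yield
\[
\Ext_A^i(\Cs(\alpha),\Cs(\beta))\simeq \Ext_{A^{op}}^i(D\Cs(\beta),D\Cs(\alpha)),
\]
which, after pulling the invertible $R$-modules $U_\alpha,U_\beta$ outside the Hom/Ext groups, reduces up to an invertible twist to $\Ext_{A^{op}}^i(\St_{op}(\beta),\St_{op}(\alpha))$. This last Ext group vanishes for $i>0$ whenever $\alpha\not>\beta$, by the argument used at the start of the proof of Proposition~\ref{existenceofcostandardmodules}: choose an increasing bijection $\L\to\{1,\dots,t\}$ with $i_\alpha\leq i_\beta$ (always possible since $\alpha\not>\beta$), so that $\St_{op}(\alpha)\in A^{op}/J^{op}_{i_\beta+1}\m$, and invoke the projectivity of $\St_{op}(\beta)\in\mathcal{M}(A^{op}/J^{op}_{i_\beta+1})$. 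This proves (a) and (b), and also gives $\End_A(\Cs(\l))\simeq R$ (the invertible twist $DU_\l\otimes U_\l$ trivialises when $\alpha=\beta=\l$).

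The filtration is proved by induction on $|\L|$. As a preliminary observation, since $A^{op}$ is split quasi-hereditary, the regular right $A$-module admits a filtration with factors $\St_{op}(\l)\otimes_R V_\l$, $V_\l\in R\proj$; dualising and using $D\St_{op}(\l)\simeq U_\l\otimes_R\Cs(\l)$ shows that $DA$ itself admits the desired type of filtration. For the inductive step, pick $\alpha\in\L$ maximal with associated split heredity ideal $J\subset A$, so $A/J$ is split quasi-hereditary with poset $\L\setminus\{\alpha\}$. For $\l\neq\alpha$, the inclusion $J^{op}\subset J^{op}_{i_\l+1}$ (valid because $i_\alpha=t$) together with $\St_{op}(\l)\in\mathcal{M}(A^{op}/J^{op}_{i_\l+1})$ gives $\St_{op}(\l)\in A^{op}/J^{op}\m$, so $\Cs(\l)\in A/J\m$; a direct inspection of the construction in Proposition~\ref{existenceofcostandardmodules} shows that these $\Cs(\l)$ coincide with the costandards of $A/J$.

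Now let $M$ be $(A,R)$-injective and fix $N$ and $k$ with $M\oplus N\simeq DA^k$. Set $M^J:=\Hom_A(A/J,M)$. As a summand of $\Hom_A(A/J,DA^k)\simeq D(A/J)^k$, $M^J$ is $(A/J,R)$-injective, so by the induction hypothesis applied to $A/J$ it admits a filtration whose factors are of the form $W\otimes_R\Cs(\l)$ for $\l\neq\alpha$ and $W\in R\proj$. On the other hand, $M/M^J$ is a summand of $(DA/D(A/J))^k\simeq DJ^k$; since $\alpha$ is maximal in $A^{op}$, $J\simeq Y\otimes_R\St_{op}(\alpha)$ as a right $A$-module for some $Y\in R\proj$, hence $DJ^k\simeq W^k\otimes_R\Cs(\alpha)$ for a suitable $W\in R\proj$. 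Using $\End_A(\Cs(\alpha))\simeq R$ together with Lemma~\ref{tensorprojcommutingonHom} and the tensor-Hom adjunction, the functor $\Cs(\alpha)\otimes_R-\colon R\proj\to A\m$ is fully faithful, so any $A$-summand of $W^k\otimes_R\Cs(\alpha)$ is of the form $W'\otimes_R\Cs(\alpha)$ with $W'\in R\proj$. Concatenating the filtration of $M^J$ with the single additional step $M^J\subset M$, whose factor is $W'\otimes_R\Cs(\alpha)$, yields the filtration of $M$ required by the proposition. The main technical obstacle I anticipate is the identification of $A$-costandards with $A/J$-costandards in the range $\l\neq\alpha$, which is what guarantees that the induction hypothesis produces a filtration by $A$-costandards rather than by some unrelated set of $A/J$-modules.
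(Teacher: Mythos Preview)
Your argument for parts (a) and (b) is essentially the same as the paper's: both reduce $\Ext_A^i(\Cs(\alpha),\Cs(\beta))$ via the duality \citep[Lemma 2.15]{CRUZ2022410} to an $\Ext$ group between standard modules over $A^{op}$, and then invoke the heredity-chain vanishing (what the paper cites as \citep[Proposition 4.13]{Rouquier2008}).

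For the filtration, however, your route is genuinely different. The paper's proof is a two-line duality trick: since $M$ is $(A,R)$-injective, $DM$ is projective over $A^{op}$; as $(A^{op},\{D\Cs(\l)\})$ is split quasi-hereditary, $DM$ carries an ordered filtration by $D\Cs_i\otimes_R U_i$ (this is the standing result that projectives have standard filtrations, \citep[Proposition 4.13]{Rouquier2008} or \citep[Proposition B.0.5]{cruz2021cellular}); dualising back gives the claimed filtration of $M$. Your argument instead runs an induction on $|\L|$, peeling off the top layer via $M^J=\Hom_A(A/J,M)$ and identifying $M/M^J$ as a module of the form $W'\otimes_R\Cs(\alpha)$ using $\End_A(\Cs(\alpha))\simeq R$. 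This is correct: $M^J$ is indeed a summand of $D(A/J)^k$ (hence $(A/J,R)$-injective and $R$-projective), $M/M^J$ is a summand of $DJ^k\in\add\Cs(\alpha)$, and the full faithfulness of $\Cs(\alpha)\otimes_R-$ on $R\proj$ pins down the summand. The identification of the $A/J$-costandards with $\{\Cs(\l):\l\neq\alpha\}$ follows cleanly from the uniqueness clause in Proposition~\ref{existenceofcostandardmodules}, since $\Ext_{A/J}^i=\Ext_A^i$ on $A/J\m$ for a split heredity ideal $J$; your ``direct inspection'' remark would benefit from saying this explicitly. The upshot: the paper's proof is shorter and leverages the already-established standard filtration of projectives on the opposite side, whereas your argument is more self-contained and makes no appeal to that result, at the cost of an explicit induction and the summand-recognition step.
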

	\begin{proof}
		$DM$ is a projective $A^{op}$-module (see for example \citep[Lemma 2.11]{CRUZ2022410}).  Recall that $(A^{op}, D\Cs(\l))$ is split highest weight category. There exists a filtration $
		0=P_{n+1}\subset P_n\subset\cdots\subset P_1=DM
		$ with \mbox{$P_i/P_{i+1}\simeq D\Cs_i\otimes_R U_i,$}  $1\leq i\leq n$ (see for example \citep[Proposition 4.13]{Rouquier2008} or \citep[Proposition B.0.5]{cruz2021cellular}). Applying $D$ yields the exact sequence \begin{align}
			0\rightarrow D(D\Cs_i\otimes_R U_i)\rightarrow DP_i\rightarrow DP_{i+1}\rightarrow 0.\label{eq88}
		\end{align}Note that $
		D(D\Cs_i\otimes_R U_i)\simeq \Hom_R(U_i, \Hom_R(D\Cs_i, R))\simeq \Hom_R(U_i, \Cs_i)\simeq DU_i\otimes_R \Cs_i.
		$ In particular, \linebreak$DP_n\simeq DU_n\otimes_R \Cs_n$ and $DP_1\simeq M$. Now by induction using at each step the filtration of $DP_{i+1}$ and the exact sequence $(\ref{eq88})$ we can construct a filtration of $DP_i$:
		$		0\subset I_{i}\subset I_{i+1}\subset \cdots \subset I_n=DP_i,$ satisfying $I_j/I_{j-1}\simeq DU_j\otimes_R \Cs_j$. 
		
		The second part follows from \citep[Proposition 4.13]{Rouquier2008} together with Proposition \ref{existenceofcostandardmodules} and Lemma 2.15 of \citep{CRUZ2022410}.
	\end{proof}As usual, denote by $\Cssim$ the set $\{\Cs(\l)\otimes_R U_\l\colon U_\l\in R\proj \}$.
	This means that the filtrations for $M\in \mathcal{F}(\Cssim)$ can be chosen so that the costandard modules with the lowest index appear at the bottom of the filtration (see also its dual version in \citep[Proposition B.0.6]{cruz2021cellular}). Analogously to the case of standard modules, we will use the notations
	$\Cssim_{\mu>\l}$ and $\Cssim_{\mu<\l}$.

	\begin{Theorem}\label{quasihereditaryintermsofcostandards}
		Let $A$ be a projective Noetherian $R$-algebra and let $\L$ be a poset. Then, there exists modules $\{\St(\l)\colon \l\in\L \}$ such that $(A, \{\Delta(\lambda)_{\lambda\in \Lambda}\})$ is split quasi-hereditary if and only if there exist modules $\{\Cs(\l)\colon \l\in\L \}$ satisfying the following properties:
		\begin{enumerate}[(i)]
			\item The modules  $\Cs(\l)\in A\m$ are projective over $R$ for every $\l\in \L$.
			\item Given $\alpha, \beta\in \L$, if $\Hom_A(\Cs(\alpha), \Cs(\beta))\neq 0$, then $\alpha\geq \beta$.
			\item $\End_A(\Cs(\l))\simeq R$, $\l\in\L$.
			\item For each $\l\in\L$, there exists an $(A, R)$-injective module which is projective as $R$-module $I(\l)$ together with an exact sequence $
			0\rightarrow \Cs(\l)\rightarrow I(\lambda)\rightarrow K(\l)\rightarrow 0, \quad K(\l)\in \mathcal{F}(\Cssim_{\mu>\l}).
			$
			\item $DA_A\in \add\left( \sumSt I(\l)\right) $, where $A_A$ denotes the right regular $A$-module.
		\end{enumerate}
	\end{Theorem}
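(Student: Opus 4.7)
The plan is to establish both implications by reducing to Definition \ref{qhdef} via the standard duality $D\colon A\m\to A^{op}\m$ and invoking the self-duality of split quasi-heredity under taking opposite algebras (\citep[Theorem 3.3.8]{cruz2021cellular}). In both directions, the candidate costandard modules $\Cs(\l)$ correspond bijectively, via $D$, to a system of standard modules for $A^{op}$: the idea is that $D\Cs(\l)$ should play the role of a standard module for $A^{op}$, and each of the five conditions (i)--(v) on $\Cs(\l)$ in $A\m$ should translate, one by one, into the matching condition of Definition \ref{qhdef} for $D\Cs(\l)$ in $A^{op}\m$.

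For the forward direction, assume $(A, \{\St(\l)\}_{\l\in \L})$ is split quasi-hereditary and take the costandard modules $\Cs(\l)$ provided by Proposition \ref{existenceofcostandardmodules}. Condition (i) is immediate from the construction, and (iii) follows from $\End_{A^{op}}(D\Cs(\l))\simeq R$ after applying $D$. Condition (ii) reduces, via $\Hom_A(\Cs(\alpha), \Cs(\beta))\simeq \Hom_{A^{op}}(D\Cs(\beta), D\Cs(\alpha))$, to the Hom-ordering statement for the standard modules of $A^{op}$, which are $D\Cs(\l)$ by Corollary \ref{dualofstandardiscostandard} applied to $A^{op}$. For (iv), I would dualize the defining sequence $0\to C\to P\to D\Cs(\l)\to 0$ of Definition \ref{qhdef}(iv) for $A^{op}$ and set $I(\l):=DP$, noting that $DP\in \add_A DA$ is $(A,R)$-injective and that $DC\in \mathcal{F}(\Cssim_{\mu>\l})$, since $D$ restricts to an exact anti-equivalence on $R$-projective modules and carries the standard $A^{op}$-filtration of $C$ to a costandard $A$-filtration. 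Finally (v) follows by dualizing the decomposition of the regular $A^{op}$-module as a summand of a direct sum of the relevant $P$'s.

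Conversely, suppose modules $\Cs(\l)$ satisfy (i)--(v); I would then verify that $(A^{op}, \{D\Cs(\l)\}_{\l\in \L})$ satisfies Definition \ref{qhdef}. Conditions (i) and (iii) of that definition follow directly from (i) and (iii) here by applying $D$; condition (ii) follows from (ii) here via the same adjunction, with the poset ordering reversing correctly. Condition (iv) is obtained by applying $D$ to the sequence in (iv) here, noting that $(A,R)$-injectivity of $I(\l)$ translates, via $DDA\simeq A$, to $DI(\l)\in \add A^{op}$; condition (v) is the $D$-dual of (v) here. Once $A^{op}$ is split quasi-hereditary, so is $A$ by \citep[Theorem 3.3.8]{cruz2021cellular}, and Corollary \ref{dualofstandardiscostandard} identifies the given $\Cs(\l)$ as the costandard modules of $A$ in the sense of Proposition \ref{existenceofcostandardmodules}.

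The main technical point — and what must be verified carefully once — is the behaviour of $D$ on the relevant subcategories: that $D$ is an exact anti-equivalence on $A\m \cap R\proj$, that it sends $\add_A DA$ bijectively to $\add_{A^{op}} A^{op}$ (thereby swapping $(A,R)$-injective $A$-modules with projective $A^{op}$-modules via $DDA\simeq A$), and that it preserves the filtration subscripts, carrying $\Cssim_{\mu>\l}$ to the tilde-standard set for $A^{op}$ indexed by $\mu>\l$. With these three observations in hand, each of (i)--(v) translates mechanically and the proof reduces to invoking Definition \ref{qhdef} together with the self-duality of split quasi-heredity.
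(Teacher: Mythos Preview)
Your proposal is correct and follows essentially the same route as the paper's proof: both directions are obtained by translating conditions (i)--(v) for $\Cs(\l)$ into the axioms of Definition~\ref{qhdef} for $D\Cs(\l)$ over $A^{op}$ via the standard duality $D$, and then invoking Proposition~\ref{existenceofcostandardmodules} together with the self-duality of split quasi-heredity under taking opposite algebras. The paper's argument is terser but structurally identical; your added remarks about the behaviour of $D$ on $(A,R)$-injectives and on the filtration subscripts are exactly the verifications the paper leaves implicit.
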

	\begin{proof}
		Assume that there exists modules $\{\St(\l)\colon \l\in\L \}$ such that $(A, \{\Delta(\lambda)_{\lambda\in \Lambda}\})$ is split quasi-hereditary. By Proposition \ref{existenceofcostandardmodules}, $(A^{op}, \{D\Cs(\lambda)_{\lambda\in \Lambda}\})$ is a split quasi-hereditary algebra. Applying \citep[Proposition 2.2]{CRUZ2022410} to axioms (ii) and (iii) of Definition \ref{qhdef} we obtain (ii) and (iii). Applying $D$ to the remaining axioms, the remaining conditions follow (see also Proposition \ref{relativeinjectiveshavefiltrations}).
		
		The previous argument also shows that conditions (i), (ii), (iii), (iv) and (v) are equivalent to $(A^{op}, \{D\Cs(\lambda)_{\lambda\in \Lambda}\})$ being a quasi-hereditary algebra. Hence, the result follows applying Proposition \ref{existenceofcostandardmodules}(i) to $(A^{op}, \{D\Cs(\lambda)_{\lambda\in \Lambda}\})$.
	\end{proof}

	\begin{Lemma}	\citep[Lemma 4.21]{Rouquier2008}\label{extvanishesforfiltratedmodules}Let $(A, \{\Delta(\lambda)_{\lambda\in \Lambda}\})$ be a split quasi-hereditary algebra. 
		Let $M, N\in A\m$. The following holds.
		\begin{enumerate}[(a)]
			\item If $M\in \mathcal{F}(\Stsim)$, then $\Ext_A^i(M, \Cs(\l))=0, \ i>0$.
			\item If $N\in \mathcal{F}(\Cssim)$, then $\Ext_A^i(\St(\l), N)=0, \ i>0$.
			\item If $M\in \mathcal{F}(\Stsim)$ and $N\in \mathcal{F}(\Cssim)$, then $\Ext_A^i(M, N)=0, \ i>0$.
			\item If  $M\in \mathcal{F}(\Stsim)$ or $M\in \mathcal{F}(\Cssim)$, then $M\in R\proj$.
		\end{enumerate} 
	\end{Lemma}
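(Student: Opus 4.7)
The plan is to prove all four parts by induction on the length of a filtration, with the base cases reducing to Proposition \ref{existenceofcostandardmodules}(ii). The only mild complication compared to the classical (field) case is the presence of $R$-projective tensor factors $U_\lambda$ in the $\Stsim$/$\Cssim$-filtration quotients, but since every finitely generated projective $R$-module is a direct summand of some $R^n$, this is handled by the additivity of $\Ext$.

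For part (a), I would first treat the base case $M = \St(\mu)\otimes_R U$ with $U\in R\proj$. Write $U$ as a direct summand of $R^n$, so $M$ is a direct summand of $\St(\mu)^n$; then $\Ext_A^i(M,\Cs(\l))$ is a direct summand of $\Ext_A^i(\St(\mu),\Cs(\l))^n$, which is zero for $i>0$ by Proposition \ref{existenceofcostandardmodules}(ii). For the inductive step, given a filtration $0=M_{t+1}\subset M_t\subset \cdots\subset M_1=M$ with $M_i/M_{i+1}\in \Stsim$, the short exact sequence $0\to M_{i+1}\to M_i\to M_i/M_{i+1}\to 0$ yields a long exact sequence in $\Ext_A^{\bullet}(-,\Cs(\l))$; combining the base case on the quotient with the induction hypothesis on $M_{i+1}$, we conclude $\Ext_A^i(M_i,\Cs(\l))=0$ for $i>0$. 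Part (b) is entirely symmetric: use the base case $N=\Cs(\beta)\otimes_R V$ as a direct summand of $\Cs(\beta)^n$ and induct on a $\Cssim$-filtration of $N$ via the long exact sequence in $\Ext_A^{\bullet}(\St(\l),-)$.

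Part (c) reduces to (b): induct on the length of a $\Stsim$-filtration of $M$. For a filtration of length one, $M$ is a direct summand of $\St(\mu)^n$, so $\Ext_A^i(M,N)$ is a summand of $\Ext_A^i(\St(\mu),N)^n=0$ by (b); the inductive step is again a long exact sequence argument. For part (d), the modules $\St(\l)$ are in $R\proj$ by Definition \ref{qhdef}(i), and the modules $\Cs(\l)$ are in $R\proj$ by Theorem \ref{quasihereditaryintermsofcostandards}(i), hence so are all factors $\St(\l)\otimes_R U_\l$ and $\Cs(\l)\otimes_R U_\l$. Given a filtration $0=M_{t+1}\subset \cdots \subset M_1=M$, the short exact sequence $0\to M_{i+1}\to M_i\to M_i/M_{i+1}\to 0$ splits as a sequence of $R$-modules because the quotient is $R$-projective; hence $M_i \simeq M_{i+1}\oplus M_i/M_{i+1}$ as $R$-modules, and an easy induction delivers $M\in R\proj$.

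The main obstacle, such as it is, is purely bookkeeping: one must consistently keep track of the tensor factors $U_\mu\in R\proj$ in the definition of $\Stsim$ (and $\Cssim$) and remember to reduce to the field-case vanishing statements via the direct-summand-of-$R^n$ trick before invoking Proposition \ref{existenceofcostandardmodules}(ii). An alternative to the summand trick is to use Lemma \ref{tensorprojcommutingonHom} and the exactness of $-\otimes_R U$ for $U\in R\proj$ to compute $\Ext_A^i(\St(\mu)\otimes_R U,\Cs(\l))\simeq \Hom_R(U,\Ext_A^i(\St(\mu),\Cs(\l)))$, which vanishes for the same reason. Beyond this, everything is formal from the orthogonality in Proposition \ref{existenceofcostandardmodules}(ii) and the long exact sequence of $\Ext$.
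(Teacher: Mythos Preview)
Your proposal is correct and follows essentially the same approach as the paper: both handle the base case $\St(\mu)\otimes_R U$ via additivity (the paper phrases this as ``$\Ext_A^i(\St(\beta)\otimes_R U,\Cs(\l))$ lies in the additive closure of $\Ext_A^i(\St(\beta),\Cs(\l))$'', which is your summand-of-$R^n$ trick), then induct along the filtration using the long exact sequence, and for (d) observe that the filtration exact sequences split over $R$ because the quotients are $R$-projective. One small remark: in your aside about the alternative computation $\Ext_A^i(\St(\mu)\otimes_R U,\Cs(\l))\simeq \Hom_R(U,\Ext_A^i(\St(\mu),\Cs(\l)))$, the relevant input is tensor--Hom adjunction in the first variable rather than Lemma~\ref{tensorprojcommutingonHom} (which concerns the second variable), but this does not affect your main argument.
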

	\begin{proof}
		Observe that for $i>0$ and every $\beta, \l\in \L$, $U\in R\proj$ the $R$-module ${\Ext_A^i(\St(\beta)\otimes_R U, \Cs(\l))}$ is in the additive closure of $\Ext_A^i(\St(\beta), \Cs(\l))$. Hence, $\Ext_A^i(\St(\beta)\otimes_R U, \Cs(\l))=0$ by Proposition \ref{existenceofcostandardmodules}.
		
		Let $M\in \mathcal{F}(\Stsim)$.  There is a filtration \begin{align}
			0=P_{n+1}\subset P_n\subset \cdots \subset P_1=M, \quad \text{ with } P_i/P_{i+1}\simeq \St_i\otimes_R U_i.\label{eq89}
		\end{align} Let $\l\in \L$. Applying $\Hom_A(-, \Cs(\l))$ to the exact sequence of $P_i$ yields the exact sequence
		\begin{align*}
			0=\Ext_A^j(\St_i\otimes_R U_i)\rightarrow \Ext_A^j(P_i, \Cs(\l))\rightarrow \Ext_A^j(P_{i+1}, \Cs(\l))\rightarrow \Ext_A^{j+1}(\St_i\otimes_R U_i, \Cs(\l))=0, \ \forall j>1.
		\end{align*} We conclude, for $j>1$,$
		\Ext_A^j(P_i, \Cs(\l))\simeq \Ext_A^j(P_{i+1}, \Cs(\l))\simeq \Ext_A^j(P_n, \Cs(\l))=\Ext_A^j(\St_n, \Cs(\l))=0.
		$ So $(a)$ holds. Since $\St_i\otimes_R U_i$ is projective over $R$ all the exact sequences
		$0\rightarrow P_{i+1}\rightarrow P_i\rightarrow \St_i\otimes_R U_i\rightarrow 0.
		$   are split over $R$. Thus, every $P_i$ is projective over $R$. In particular, $M\in R\proj$. The argument is analogous for $M\in \mathcal{F}(\Cssim)$.
		
		The proof of $b)$ is analogous now applying the functor $\Hom_A(\St(\l), -)$ to the exact sequences given by a filtration of $N\in \mathcal{F}(\Cssim)$.
		Let $N\in \mathcal{F}(\Cssim)$. Applying $\Hom_A(-, N)$ to the exact sequences of the filtration $(\ref{eq89})$ we get the isomorphism
		$
		\Ext_A^{j>0}(P_i, N)\simeq  \Ext_A^{j>0}(P_{i+1}, N).
		$ 
		
		Therefore, $
		0=\Ext_A^{j>0}(\St\otimes_R U_n, N)=\Ext_A^{j>0}(P_n, N)\simeq \Ext_A^{j>0}(P_1, N)=\Ext_A^{j>0}(M, N). 
		$
		Assume $M\in \mathcal{F}(\Stsim)$. 
	\end{proof}

	\section{On Ext-projective and Ext-injective objects of $\mathcal{F}(\Stsim)$}\label{On Ext-projective and Ext-injective objects of}

	The following result is Lemma 4.21 of \cite{Rouquier2008}. For quasi-hereditary algebras over fields, there are many proofs of this result in the literature (see for example \cite{MR1128706}). However, for quasi-hereditary algebras over commutative Noetherian rings as far as the author knows this result can only be found in \cite{Rouquier2008}. We present a different approach than the one used in \cite{Rouquier2008}, also because it is not clear to the author why $M/M_0$ is projective over $R$ using Rouquier's approach.

	\begin{Theorem}\label{filtrationsintermsofext}
		Let $(A, \{\Delta(\lambda)_{\lambda\in \Lambda}\})$ be a split quasi-hereditary algebra. 
		Let $M\in R\proj\cap A\m$. 
		\begin{enumerate}
			\item If $\Ext_A^1(M, \Cs(\l))=0, \ \forall \l\in\L$, then $M\in \mathcal{F}(\Stsim)$.
			\item If $\Ext_A^1(\St(\l), M)=0, \ \forall \l\in\L$, then $M\in \mathcal{F}(\Cssim)$.
		\end{enumerate}
	\end{Theorem}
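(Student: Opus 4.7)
The plan is to prove (1) by induction on $|\Lambda|$; part (2) then follows from (1) applied to $A^{op}$ via the duality $D$: using Corollary \ref{dualofstandardiscostandard} (the costandard modules of $A^{op}$ are $D\St(\lambda)$) and the Ext isomorphism $\Ext^1_{A^{op}}(DM, D\St(\lambda)) \simeq \Ext^1_A(\St(\lambda), M)$ in $R\proj \cap A\m$, the hypothesis of (2) becomes that of (1) for $DM$ over $A^{op}$; dualizing the resulting filtration and identifying $D\St_{op}(\lambda) \simeq U_\lambda \otimes_R \Cs(\lambda)$ via Proposition \ref{existenceofcostandardmodules} produces a $\Cssim$-filtration of $M$. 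The base case $|\Lambda| \leq 1$ is routine.

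For the inductive step of (1), fix $\alpha \in \Lambda$ maximal, so that $\St(\alpha) \in \mathcal{M}(A)$ is a projective $A$-module and the split heredity ideal $J := \im \tau_{\St(\alpha)}$ yields a split quasi-hereditary quotient $A/J$ on $\Lambda \setminus \{\alpha\}$; for $\mu \neq \alpha$, both $\St(\mu)$ and $\Cs(\mu)$ are killed by $J$ (using $\Hom_A(\St(\alpha), \St(\mu)) = 0$ from axiom~(ii) of Definition \ref{qhdef} together with $\Hom_A(\St(\alpha), \Cs(\mu)) = 0$ from Proposition \ref{existenceofcostandardmodules}), and coincide with the standard and costandard modules of $A/J$. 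Consider the trace map
\[
\phi \colon \St(\alpha) \otimes_R \Hom_A(\St(\alpha), M) \to M, \qquad d \otimes f \mapsto f(d),
\]
and set $M_0 := \im \phi$. The four intermediate claims are: \textbf{(a)} $U := \Hom_A(\St(\alpha), M) \in R\proj$ and $\phi$ is injective, so $M_0 \simeq \St(\alpha) \otimes_R U \in \Stsim$; \textbf{(b)} $M/M_0 \in R\proj$; \textbf{(c)} $JM = M_0$, so $M/M_0 \in A/J\m$; \textbf{(d)} $\Ext^1_{A/J}(M/M_0, \Cs(\mu)) = 0$ for every $\mu \neq \alpha$. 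Given these, the inductive hypothesis applied to $A/J$ furnishes a $\Stsim_{\Lambda \setminus \{\alpha\}}$-filtration of $M/M_0$, which together with $M_0 \in \Stsim$ delivers the required $\Stsim$-filtration of $M$.

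The routine parts are (c), immediate from $M_0 = JM$, and the $R$-projectivity of $U$ in (a): since $\St(\alpha)$ is f.g.\ $A$-projective, $U \simeq \Hom_A(\St(\alpha), A) \otimes_A M$, and as $\Hom_A(\St(\alpha), A) \in \mathcal{M}(A^{op})$ is an $A^{op}$-summand of some $A^k$, the module $U$ is a summand of $M^k$ and hence lies in $R\proj$. For (d), apply $\Hom_A(-, \Cs(\mu))$ to $0 \to M_0 \to M \to M/M_0 \to 0$: the LES, combined with $\Hom_A(M_0, \Cs(\mu)) \simeq \Hom_R(U, \Hom_A(\St(\alpha), \Cs(\mu))) = 0$ (via Proposition \ref{existenceofcostandardmodules} and Lemma \ref{tensorprojcommutingonHom}) and the hypothesis $\Ext^1_A(M, \Cs(\mu)) = 0$, yields $\Ext^1_A(M/M_0, \Cs(\mu)) = 0$; to transfer to $\Ext^1_{A/J}$, invoke the change-of-rings spectral sequence
\[
E_2^{p,q} = \Ext^p_{A/J}(M/M_0, \Ext^q_A(A/J, \Cs(\mu))) \Rightarrow \Ext^{p+q}_A(M/M_0, \Cs(\mu)),
\]
which degenerates because $\Ext^q_A(A/J, \Cs(\mu)) = 0$ for $q \geq 1$ and $\mu \neq \alpha$, itself derived from the SES $0 \to J \to A \to A/J \to 0$, the left $A$-module isomorphism $J \simeq \St(\alpha) \otimes_R \Hom_A(\St(\alpha), A)$ (using $\St(\alpha) \in \mathcal{M}(A)$), and Proposition \ref{existenceofcostandardmodules}.

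The main obstacles are the injectivity of $\phi$ in (a) and the $R$-projectivity of $M/M_0$ in (b), the latter being precisely the subtlety the author flags as unclear in Rouquier's approach. My strategy for both is reduction to the classical field case by base change to the residue fields $R(\mi)$ for every $\mi \in \MaxSpec R$: projectivity of $\St(\alpha)$ and $M \in R\proj$ give $U(\mi) \simeq \Hom_{A(\mi)}(\St(\alpha)(\mi), M(\mi))$, so $\phi(\mi)$ coincides with the trace of $\St(\alpha)(\mi)$ in $M(\mi)$; base change of Ext with $M \in R\proj$ yields $\Ext^1_{A(\mi)}(M(\mi), \Cs(\alpha)(\mi)) = 0$, so the classical theorem over the field $R(\mi)$ produces both the injectivity of $\phi(\mi)$ and the $R(\mi)$-splitness of $M_0(\mi) \hookrightarrow M(\mi)$. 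Since $\ker \phi \subseteq \St(\alpha) \otimes_R U \in R\proj$ and $M/M_0$ are f.g.\ over the Noetherian ring $R$, Nakayama-type arguments combined with the $R$-flatness of $M$ upgrade these residue-field statements to the global conclusions $\ker \phi = 0$ and $\Tor_1^R(M/M_0, R(\mi)) = 0$ for every $\mi$, from which $M/M_0 \in R\proj$ by the standard local criterion for projectivity.
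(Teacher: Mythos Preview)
Your overall architecture (induction on $|\Lambda|$, trace map at a maximal $\alpha$, then induction on the quotient $A/J$, with (2) deduced from (1) by duality) matches the paper's, and your treatment of (c), (d), and the $R$-projectivity of $U$ is fine. The gap is in your handling of (a) and (b).

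The sentence ``base change of Ext with $M\in R\proj$ yields $\Ext^1_{A(\mi)}(M(\mi),\Cs(\alpha)(\mi))=0$'' is the problem, and in fact you need this vanishing for \emph{all} $\mu$, not just $\alpha$, to invoke the field case. For a deleted projective $A$-resolution $P_\bullet\to M$ the complex $\Hom_A(P_\bullet,\Cs(\mu))$ consists of projective $R$-modules, but tensoring with $R(\mi)$ does not commute with taking cohomology: the relevant K\"unneth/universal-coefficient spectral sequence has $E_2$-terms $\Tor_p^R\bigl(\Ext_A^{q}(M,\Cs(\mu)),R(\mi)\bigr)$, so $\Ext^1_{A(\mi)}(M(\mi),\Cs(\mu)(\mi))$ receives contributions from $\Tor_p^R\bigl(\Ext_A^{p+1}(M,\Cs(\mu)),R(\mi)\bigr)$ for $p\ge 1$. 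You only know that $\Ext^1_A(M,\Cs(\mu))=0$; the higher Ext groups are \emph{not} known to vanish at this stage (their vanishing is a consequence of the theorem, via Lemma~\ref{extvanishesforfiltratedmodules}). Note also that for the maximal $\alpha$ the module $\Cs(\alpha)(\mi)$ is injective, so the condition you isolate is vacuous and cannot by itself force $\phi(\mi)$ to be injective: over a field, injectivity of the trace map is equivalent to $\Tor_1^A(A/J,M)=0$, which dualises to $\Ext^1_A(M,D(A/J))=0$ and hence genuinely requires the Ext-vanishing against the $\Cs(\mu)$ with $\mu\neq\alpha$.

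The paper avoids this circularity entirely by working directly over $R$: after first observing that the hypothesis gives $\Ext^1_A(M,N)=0$ for every $N\in\mathcal{F}(\Cssim)$ (filtration induction), it rewrites $\tau_{\St(\alpha),M}$, via two commutative squares, as the map $D\Hom_A(M,D\tau_{\St(\alpha),A})$. Since $D(A/J)\in\mathcal{F}(\Cssim)$, the map $\Hom_A(M,D\tau_{\St(\alpha),A})$ is surjective with $R$-projective target, hence its $R$-dual is an $(A,R)$-monomorphism. This simultaneously gives the injectivity of $\phi$ and that its cokernel lies in $R\proj$, using only $\Ext^1$-vanishing and no passage to residue fields.
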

	\begin{proof} Assume that $\Ext_A^1(M, \Cs(\l))=0$ for some $M\in R\proj\cap A\m$. By induction on the size of filtrations of modules in $\mathcal{F}(\Cssim)$ we deduce that $\Ext_A^1(M, N)=0$ for every $N\in \mathcal{F}(\Cssim)$.
		Let $\l\in \L$ be maximal. Thus, $\St(\l)\in \mathcal{M}(A)$. Recall that $\tau_{\St(\l), A}$ is a left and right $(A, R)$-monomorphism (see for example \citep[Lemma A.1.8., Proposition A.1.4.]{cruz2021cellular}). 
		Analogously, we can consider the left $A$-homomorphism 
		$\tau_{\St(\l), M}\colon \St(\l)\otimes_R \Hom_A(\St(\l), M)\rightarrow M$. If $M\in \mathcal{F}(\Stsim)$, then it is possible to construct a filtration with $\St(\l)\otimes_R U_\l$ appearing at the bottom, where $U_\l$ is a projective $R$-module (possibly the zero module). Therefore, we want to show that $\tau_{\St(\l), M}$ is an $(A, R)$-monomorphism. If we show in addition that its cokernel belongs to $\mathcal{F}(\Stsim)$, then we are done.

		\textit{Claim A.} We can relate $\tau_{\St(\l), A}\otimes_A M$ and $\tau_{\St(\l), M}$ through the following commutative diagram:
		\begin{equation}
			\begin{tikzcd}
				\St(\l)\otimes_R \Hom_A(\St(\l), A)\otimes_A M\arrow[r, "\tau_{\St(\l) {,} A}\otimes_A M", swap, outer sep=0.75ex] \arrow[d, "\St(\l)\otimes_R \psi", "\simeq"'] & A\otimes_A M \arrow[d, "\mu_M", "\simeq"'] \\
				\St(\l)\otimes_R \Hom_A(\St(\l), M)\arrow[r, "\tau_{\St(\l){,} M}"] & M
			\end{tikzcd},\label{eqqqh4}
		\end{equation}where $\mu_M$ is the multiplication map and $\psi$ is the natural isomorphism provided by the isomorphism of functors $\Hom_A(\St(\l), A) \otimes_A -\simeq \Hom_A(\St(\l), -)\colon A\m\rightarrow R\m$.
		In fact,
		\begin{align}
			\tau_{\St(\l), M}\St(\l)\otimes_R \psi(l\otimes f\otimes m)&=\tau_{\St(\l), M}(l\otimes \psi(f\otimes m))=\psi(f\otimes m)(l)=f(l)m \\
			\mu_M\circ \tau_{\St(\l), A}\otimes_A M(l\otimes f\otimes m)&=\mu_M(f(l)\otimes m)=f(l)m, \ l\in \St(\l), f\in \Hom_A(\St(\l), A), m\in M.
		\end{align}

		\textit{Claim B.} There are isomorphisms $\delta$ and $\theta$ making the following diagram commutative
		\begin{equation}
			\begin{tikzcd}
				\St(\l)\otimes_R \Hom_A(\St(\l), A)\otimes_A M\arrow[r, "\tau_{\St(\l) {,} A}\otimes_A M"] \arrow[d, "\delta", "\simeq"'] & A\otimes_A M \arrow[d, "\theta", "\simeq"'] \\
				D\Hom_A(M, D(\St(\l)\otimes_R \Hom_A(\St(\l), A)) \arrow[r, "D\Hom_A(M{,} D\tau_{\St(\l) {,} A})", outer sep=0.75ex]& D\Hom_A(M, DA)
			\end{tikzcd}.\label{eqqqh3}
		\end{equation}
		
		Note that by Tensor-Hom adjunction $D\Hom_A(M, DA)\simeq DDM$.
		Hence, the map \linebreak$\theta\in \Hom_A(A\otimes_A M, D\Hom_A(M, DA))$ given by $\theta(a\otimes m)(g)=g(am)(1_A)$ is an isomorphism. Further, as left $A$-modules,
		\begin{align*}
			D\Hom_A(M, D(\St(\l)\otimes_R \Hom_A(\St(\l), A))&\simeq DD(\St(\l)\otimes_R \Hom_A(\St(\l), A)\otimes_A M)\\&\simeq \St(\l)\otimes_R \Hom_A(\St(\l), A)\otimes_A M.
		\end{align*} Denote by $\delta\in \Hom_A(\St(\l)\otimes_R \Hom_A(\St(\l), A)\otimes_A M, D\Hom_A(M, D(\St(\l)\otimes_R \Hom_A(\St(\l), A)))$ this isomorphism. Explicitly, for every $l\in \St(\l), f\in \Hom_A(\St(\l), A), \ m\in M,$  \begin{align*}
			\delta(l\otimes f\otimes m)(g)=g(m)(l\otimes f), \ g\in \Hom_A(M, D(\St(\l)\otimes_R \Hom_A(\St(\l), A)).
		\end{align*}
		Let $l\otimes f \otimes m\in \St(\l)\otimes_R \Hom_A(\St(\l), A)\otimes_A M$, $g\in \Hom_A(M, DA)$. Then,
		\begin{align*}
			D\Hom_A(M, D\tau_{\St(\l), A})\circ \delta(l\otimes f\otimes m)(g)&=\delta(l\otimes f
			\otimes m)\Hom_A(M, D\tau_{\St(\l), A}) (g)\\=\delta(l\otimes f\otimes m)(D\tau_{\St(\l), A} \circ g)=D\tau_{\St(\l), A} g(m) (l\otimes f) &= g(m)\circ \tau_{\St(\l), A}(l\otimes f)=g(m)(f(l)).
		\end{align*}
		On the other hand,
		\begin{align*}
			\theta \tau_{\St(\l), A}\otimes_A M(l\otimes f\otimes m)(g)&=\theta(\tau_{\St(\l), A}(l\otimes f)\otimes m)(g)=\theta(f(l)\otimes m)(g)\\&=g(f(l)m)(1_A)=(f(l)\cdot g(m))(1_A)=g(m)(1_Af(l))=g(m)(f(l)).
		\end{align*}
		This shows that the diagram (\ref{eqqqh3}) is commutative and Claim B follows.

		\textit{Claim C.} The map $D\Hom_A(M, D\tau_{\St(\l), A})$ is a left $(A, R)$-monomorphism.
		
		The cokernel of the right $(A, R)$-monomorphism $\tau_{\St(\l), A}$ is $A/J\in R\proj$ where $J$ is the image of $\tau_{\St(\l), A}$, and therefore $J$ is a split heredity ideal. Hence, $A/J$ belongs to $\mathcal{F}(\Stsim_{op})$. Thus, $D(A/J)$ belongs to $\mathcal{F}(\Cssim)$. So, $D=\Hom_R(-, R)$ induces left $(A, R)$-exact sequence
		\begin{align}
			0\rightarrow D(A/J)\rightarrow DA \xrightarrow{D\tau_{\St(\l), A}} D(\St(\l)\otimes_R \Hom_A(\St(\l), A))\rightarrow 0. \label{eqqqh1}
		\end{align}
		Applying $\Hom_A(M, -)$ yields the exact sequence
		\begin{align*}
			0\rightarrow \Hom_A(M, D(A/J))\rightarrow \Hom_A(M, DA)\xrightarrow{\Hom_A(M, D\tau_{\St(\l), A})} \Hom_A(M, D(L\otimes_R \Hom_A(L, A)) \rightarrow 0, 
		\end{align*} because $\Ext_A^1(M, D(A/J))=0.$
		Due to $\St(\l)\in A\proj$, by Tensor-Hom adjunction, we have \begin{align*}
			\Hom_A(M, D(\St(\l)\otimes_R \Hom_A(\St(\l), A))&\simeq \Hom_R(\St(\l)\otimes_R \Hom_A(\St(\l), A)\otimes_A M, R)\\&\simeq \Hom_R(\St(\l)\otimes_R \Hom_A(\St(\l), M), R)\in R\proj.
		\end{align*} This shows that the right $A$-homomorphism $\Hom_A(M, D\tau_{\St(\l), A})$ is an $(A, R)$-epimorphism. Therefore, \linebreak$D\Hom_A(M, D\tau_{\St(\l), A})$ is a left $(A, R)$-monomorphism.
		
		Combining Claims A, B and C, we obtain that $\tau_{\St(\l), M}$ is a left $(A, R)$-monomorphism. 
		
		Let $X$ be the cokernel of $\tau_{\St(\l), M}$. In particular, $X\in R\proj$ and the exact sequence
		\begin{align}
			0\rightarrow \St(\l)\otimes_R \Hom_A(\St(\l), M)\xrightarrow{\tau_{\St(\l), M}} M \rightarrow X \rightarrow 0 \label{eqqqh5}
		\end{align}is $(A, R)$-exact. Recall that $U_\l:=\Hom_A(\St(\l), M)\in R\proj$. It remains to show that $X\in \mathcal{F}(\Stsim)$. The exactness of $\Hom_A(\St(\l), -)$ implies that the map $\Hom_A(\St(\l), \tau_{\St(\l), M})$ is injective. We claim that it is also surjective. Let $h\in \Hom_A(\St(\l), M)$. Then, for any $x\in \St(\l)$,
		\begin{align}
			h(x)=\tau_{\St(\l), M}(x\otimes h)=\tau_{\St(\l), M}\circ (-\otimes h) (x),
		\end{align}where $-\otimes h\in \Hom_A(\St(\l), \St(\l)\otimes_R \Hom_A(\St(\l), M)).$ Consequently, $\Hom_A(\St(\l), X)=0$ and so $X\in A/J\m\cap R\proj$ (see for example \citep[Corollary A.1.13]{cruz2021cellular}). 
		
		We will proceed by induction on $|\L|$ to show that every $Y\in A\m\cap R\proj$ satisfying $\Ext_A^1(Y, \Cs(\l))=0$ for every $\l\in \L$ belongs to $\mathcal{F}(\Stsim)$.
		
		If $|\L|=1$, then $A/J\m$ is the zero category, and thus $X=0$. By (\ref{eqqqh5}) $M\in \mathcal{F}(\Stsim)$. Assume that the result holds for split quasi-hereditary algebras with $|\L|<n$ for some $n>1$. Assume that $|\L|=n$. By Proposition \ref{existenceofcostandardmodules}, $
		\Hom_A(\St(\l)\otimes_R U_\l, \Cs(\alpha))\simeq \Hom_R(U_\l, \Hom_A(\St(\l), \Cs(\alpha))=0, \ \alpha\neq \l.
		$ Let $\alpha\in \L$ distinct of $\l$. The functor $\Hom_A(-, \Cs(\alpha))$ induces the long exact sequence
		\begin{align}
			0=\Hom_A(\St(\l)\otimes_R U_\l, \Cs(\alpha))\rightarrow \Ext_A^1(X, \Cs(\alpha))\rightarrow \Ext_A^1(M, \Cs(\alpha))=0.
		\end{align}By induction, $X\in \mathcal{F}(\Stsim_{\alpha\neq \lambda})$. By (\ref{eqqqh5}) $M\in \mathcal{F}(\Stsim)$.
		Now assume that $\Ext_A^1(\St(\mu), M)=0$ for every $\mu\in\L$ and $M\in A\m\cap R\proj$. Since $\St(\mu), M\in R\proj$, \citep[Lemma 2.15]{CRUZ2022410}  yields that $\Ext_{A^{op}}^1(DM, D\St(\mu))=0$, $\mu\in\L.$ As $\{D\St(\mu)\}$ are costandard modules of $A^{op}$, we obtain by statement $1.$ that $DM\in \mathcal{F}_{A^{op}}(D\Cssim)$. Therefore, \mbox{$M\in \mathcal{F}(\Cssim)$}.
	\end{proof}
	So, it follows that  $\mathcal{F}(\Stsim)$ is a resolving subcategory of $A\m\cap R\proj$, as in the classical case. That is,  $\mathcal{F}(\Stsim)$ contains all projective $A$-modules, it is closed under extensions, under direct summands and it is also closed under kernels of epimorphisms.
	
	By Lemma \ref{extvanishesforfiltratedmodules}, we see that the condition of $M\in R\proj$ cannot be dropped in Theorem \ref{filtrationsintermsofext}. A trivial example to check this situation is the split quasi-hereditary algebra $R$ for some commutative Noetherian ring $R$ with positive global dimension and trivial Picard group. Then, $\Cs=\St=R$, and therefore $\mathcal{F}(\Stsim)=R\proj$ while $\{K\in R\m\colon \Ext_R^1(R, K)=0 \}=R\m$. 
	
	Also from the proof of Theorem \ref{filtrationsintermsofext} it follows
	\begin{Cor}
		Let $(A, \{\Delta(\lambda)_{\lambda\in \Lambda}\})$ be a split quasi-hereditary algebra. Let $\l\in \L$ be maximal.   The map $\tau_{\St(\l), M}$ is an $(A, R)$-monomorphism for all $M\in \mathcal{F}(\Stsim)$.
	\end{Cor}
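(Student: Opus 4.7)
The plan is to observe that the bulk of the work for this corollary has already been done inside the proof of Theorem \ref{filtrationsintermsofext}, so my task is only to check that the two hypotheses actually used in Claims A, B and C of that proof are satisfied by every $M \in \mathcal{F}(\Stsim)$.

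First I would inspect exactly what was needed of $M$ in the Claims A--C. Claim A is purely functorial and requires nothing. Claim B likewise uses only Tensor--Hom adjunction. The only real input comes in Claim C, where one applies $\Hom_A(M,-)$ to the $(A,R)$-exact sequence
\begin{align*}
0\rightarrow D(A/J)\rightarrow DA \xrightarrow{D\tau_{\St(\l), A}} D(\St(\l)\otimes_R \Hom_A(\St(\l), A))\rightarrow 0
\end{align*}
and needs the vanishing $\Ext_A^1(M, D(A/J))=0$, together with $M\in R\proj\cap A\m$ in order to turn the resulting $(A,R)$-epimorphism into an $(A,R)$-monomorphism under $D$. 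Since $\l$ is maximal, $J=\im\tau_{\St(\l),A}$ is a split heredity ideal, so $A/J\in \mathcal{F}(\Stsim_{op})$ and consequently $D(A/J)\in \mathcal{F}(\Cssim)$ (as argued verbatim in Claim C).

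Now I verify both hypotheses for $M\in \mathcal{F}(\Stsim)$. Lemma \ref{extvanishesforfiltratedmodules}(d) gives $M\in R\proj$, and Lemma \ref{extvanishesforfiltratedmodules}(c) applied to $M\in\mathcal{F}(\Stsim)$ and $D(A/J)\in \mathcal{F}(\Cssim)$ yields $\Ext_A^1(M,D(A/J))=0$. With these two facts in hand, Claims A, B and C apply word for word and the composition $\theta\circ(\tau_{\St(\l),A}\otimes_A M)\circ(\St(\l)\otimes_R\psi)^{-1}\circ\mu_M^{-1}$ identifies $\tau_{\St(\l),M}$ with the left $(A,R)$-monomorphism $D\Hom_A(M,D\tau_{\St(\l),A})$, giving the desired conclusion.

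The potential obstacle would be if the proof of Theorem \ref{filtrationsintermsofext} secretly relied on the stronger property $\Ext_A^1(M,\Cs(\mu))=0$ for \emph{all} $\mu\in\L$ in order to derive the $(A,R)$-monomorphism conclusion, rather than only needing it to control the cokernel $X$ in (\ref{eqqqh5}). A careful re-reading confirms that the stronger hypothesis is used only after the $(A,R)$-monomorphism has been established (namely, to push $X$ back into $\mathcal{F}(\Stsim)$ and run the induction on $|\L|$), so the weaker statement claimed here is genuinely a free by-product of Claims A--C and nothing more needs to be proved.
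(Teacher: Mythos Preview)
Your proposal is correct and is precisely what the paper intends: the paper simply states ``Also from the proof of Theorem \ref{filtrationsintermsofext} it follows'' with no further argument, and you have accurately identified that Claims A--C require only $M\in R\proj$ and $\Ext_A^1(M,D(A/J))=0$, both of which you verify for $M\in\mathcal{F}(\Stsim)$ via Lemma \ref{extvanishesforfiltratedmodules}. Your closing observation that the full hypothesis $\Ext_A^1(M,\Cs(\mu))=0$ for all $\mu$ is only used afterwards (to handle the cokernel $X$) is exactly the point.
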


	\begin{Lemma}\label{projectiveandinjectiveinsidefiltrations}
		Let $(A, \{\Delta(\lambda)_{\lambda\in \Lambda}\})$ be a split quasi-hereditary algebra.  
		\begin{enumerate}
			\item Let $M\in \mathcal{F}(\Stsim)$. If $\Ext^1_A(M, \sumSt \St(\l))=0$, then $M$ is projective over $A$.
			\item Let $N\in \mathcal{F}(\Cssim)$. If $\Ext^1_A(\sumSt \Cs(\l), N)=0$, then $N$ is $(A, R)$-injective.
		\end{enumerate} 
	\end{Lemma}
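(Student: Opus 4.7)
The plan for (1) is to construct a syzygy of $M$ inside $\mathcal{F}(\Stsim)$ and split it off using the hypothesis. Since $M$ is finitely generated over $A$, there is a surjection $\pi\colon A^n\twoheadrightarrow M$; let $K=\ker \pi$. By axioms (iv) and (v) of Definition \ref{qhdef}, each $P(\l)$ belongs to $\mathcal{F}(\Stsim)$ and so the regular module $A$ belongs to $\mathcal{F}(\Stsim)$. According to the remarks following Theorem \ref{filtrationsintermsofext}, $\mathcal{F}(\Stsim)$ is a resolving subcategory of $A\m\cap R\proj$; in particular it is closed under kernels of epimorphisms, so $K\in \mathcal{F}(\Stsim)$. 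Once we prove $\Ext_A^1(M, K)=0$, the short exact sequence $0\to K\to A^n\to M\to 0$ splits, $M$ becomes a direct summand of $A^n$, and therefore $M\in A\proj$.

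The remaining task is to upgrade the Ext-vanishing from the family $\{\St(\l)\colon \l\in\L\}$ to all of $\mathcal{F}(\Stsim)$. For any $\l\in\L$ and $U\in R\proj$, the module $U$ is a direct summand of some $R^n$, so $\St(\l)\otimes_R U$ is a direct summand of $\St(\l)^n$, and hence $\Ext_A^1(M, \St(\l)\otimes_R U)$ is a direct summand of $\Ext_A^1(M, \St(\l))^n=0$. Then, for $M'\in \mathcal{F}(\Stsim)$ with a filtration $0=M'_{t+1}\subset M'_t\subset\cdots\subset M'_1=M'$ whose factors lie in $\Stsim$, a short induction on the filtration length $t$ via the long exact sequence of $\Ext_A(M,-)$ gives $\Ext_A^1(M, M')=0$. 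Specialising to $M'=K$ completes the proof of (1).

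For (2), the plan is to dualize and appeal to (1) over $A^{op}$. By Lemma \ref{extvanishesforfiltratedmodules}(d) both $N$ and each $\Cs(\l)$ lie in $R\proj$, so \citep[Lemma 2.15]{CRUZ2022410} provides natural isomorphisms $\Ext_A^1(\Cs(\l), N)\simeq \Ext_{A^{op}}^1(DN, D\Cs(\l))$. Applying $D$ to the filtration of $N$ by $\Cs\otimes_R U$'s exhibits $DN$ as an object of $\mathcal{F}_{A^{op}}(\Stsim_{op})$, since by Proposition \ref{existenceofcostandardmodules}(i) the modules $\{D\Cs(\l)\}$ are the standard modules $\{\St_{op}(\l)\}$ of $A^{op}$. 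The hypothesis thus translates to $\Ext_{A^{op}}^1(DN, \sumSt \St_{op}(\l))=0$, and applying (1) to $DN$ over $A^{op}$ yields $DN\in A^{op}\proj$. Consequently $N\simeq DDN\in \add DA$, i.e., $N$ is $(A,R)$-injective. The only genuinely delicate point in the whole argument is the propagation of Ext-vanishing from the generators $\Stsim$ to the ambient category $\mathcal{F}(\Stsim)$; once this dévissage is in place, the resolving character of $\mathcal{F}(\Stsim)$ does the rest.
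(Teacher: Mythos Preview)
Your proof is correct. For part (1) the paper simply cites \citep[Lemma 4.22]{Rouquier2008}, and your argument---take a free presentation, use that $\mathcal{F}(\Stsim)$ is resolving to get the syzygy $K\in\mathcal{F}(\Stsim)$, then propagate the $\Ext^1$-vanishing from the $\St(\l)$ to all of $\mathcal{F}(\Stsim)$ by d\'evissage---is the standard one and is what lies behind that citation.

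For part (2) you take a genuinely different route from the paper. The paper argues directly: it embeds $N$ into the $(A,R)$-injective module $\Hom_R(A,N)$ via the canonical $(A,R)$-exact sequence $0\to N\to \Hom_R(A,N)\to X\to 0$, checks that $X\in R\proj$ and $\Ext_A^1(\St(\l),X)=0$ (using $\Ext_A^{i>0}(\St(\l),N)=0$ and that $\Hom_R(A,N)\in\add DA$), invokes Theorem~\ref{filtrationsintermsofext} to conclude $X\in\mathcal{F}(\Cssim)$, and then uses the hypothesis (extended by the same d\'evissage) to get $\Ext_A^1(X,N)=0$ and split the sequence. You instead dualize: apply $D$ to land in $A^{op}\m$, identify $\{D\Cs(\l)\}$ with the standard modules of $A^{op}$ via Proposition~\ref{existenceofcostandardmodules}(i), transport the $\Ext^1$-vanishing through \citep[Lemma 2.15]{CRUZ2022410}, and then apply part (1) to $DN$ over $A^{op}$. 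Your reduction is more economical and makes the symmetry between (1) and (2) explicit; the paper's approach has the mild advantage of being self-contained for (2) and of exhibiting the concrete splitting inside $\Hom_R(A,N)$, but both are equally valid.
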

	\begin{proof} See for example 	\citep[Lemma 4.22]{Rouquier2008}. 
		
		Consider the $(A, R)$-exact sequence $0\rightarrow N\rightarrow \Hom_R(A, N)\rightarrow X\rightarrow 0$. Applying $\Hom_A(\sumSt \St(\l), -)$ yields $\Ext_A^1(\sumSt \St(\l), X)=0$. By Theorem \ref{filtrationsintermsofext}, $X\in \mathcal{F}(\Cssim)$. By assumption, $\Ext_A^1(X, N)=0$. Hence, $N$ is an $A$-summand of $\Hom_R(A, N)$ and consequently, it is $(A, R)$-injective. 
	\end{proof}
	This lemma says that the $\Ext$-projective objects for $\mathcal{F}(\Stsim)$ belonging to $\mathcal{F}(\Stsim)$ are exactly the projective $A$-modules.  
	\subsection{Characteristic tilting modules} \label{Tilting modules}
	Characteristic tilting modules of finite-dimensional quasi-hereditary algebras are fundamental objects in order to obtain information about simple modules, and therefore about the structure of $A\m$. Their summands are known as (partial) tilting modules. In the Noetherian case, the (partial) tilting modules behave very similarly to the classical case. 
	Previous uses of partial tilting modules for split quasi-hereditary algebras over commutative Noetherian rings can be found in \citep{Rouquier2008}, \citep[III. 4]{zbMATH01527053}, \citep{zbMATH06751586}. Partial tilting modules for $S_\mathbb{Z}(n, d)$, $n\geq d$ were studied in \citep[section 3]{zbMATH00549737}.
	
	\subsubsection{Existence of characteristic tilting modules}
	
	\begin{Def}\label{tilting}
		A module $T\in A\m$ is called \textbf{(partial) tilting} if $T\in \mathcal{F}(\Stsim)\cap \mathcal{F}(\Cssim)$.
	\end{Def}
	
	A starting point to construct them is the following result.
	\begin{Prop}\label{tiltingconstruction}
		Let	 $(A, \{\Delta(\lambda)_{\lambda\in \Lambda}\})$ be a split quasi-hereditary algebra. The following assertions hold.
		\begin{enumerate}[(a)]
			\item Let $M\in \mathcal{F}(\Stsim)$. There is a partial tilting module $T_M$ and a monomorphism $i_M\colon M\rightarrow T_M$ such that $\coker i_M\in \mathcal{F}(\St)$.
			\item  Let $\l\in\L$. There are exact sequences and a partial tilting module $T(\l)$
			\begin{align}
				0\rightarrow \St(\l)\rightarrow T(\l)\rightarrow X(\l)\rightarrow 0 \label{eq121}\\
				0\rightarrow Y(\l)\rightarrow T(\l)\rightarrow \Cs(\l)\rightarrow 0 \label{eq121b},
			\end{align} where $X(\l)\in \mathcal{F}(\St_{\mu<\l}), \ Y(\l)\in \mathcal{F}(\Cssim_{\mu<\l})$. 
		\end{enumerate}
	\end{Prop}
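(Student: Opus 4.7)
I would prove (b) first by induction on $\l\in\L$, moving upward from minimal elements, and deduce (a) from (b) by induction on the length of a $\Stsim$-filtration of $M$. In the base case of (b), $\l$ is minimal, so $\mathcal{F}(\St_{\mu<\l})$ and $\mathcal{F}(\Cssim_{\mu<\l})$ are the zero category, which forces $T(\l)=\St(\l)=\Cs(\l)$ from the two prescribed exact sequences. The identification $\St(\l)\simeq\Cs(\l)$ follows from the uniqueness clause of Proposition \ref{existenceofcostandardmodules}: I would verify that $\St(\l)$ satisfies the defining Ext-orthogonality of $\Cs(\l)$. The Hom-vanishings $\Hom_A(\St(\mu),\St(\l))=0$ for $\mu\ne\l$ are immediate from axiom (ii) of Definition \ref{qhdef} together with minimality of $\l$, and the higher vanishings $\Ext_A^{i>0}(\St(\mu),\St(\l))=0$ follow by dimension-shift along the projective covers of axiom (iv), since the kernels $C(\mu)\in\mathcal{F}(\Stsim_{>\mu})$ have no nonzero Homs to $\St(\l)$ (again by axiom (ii), as the filtration factors $\St(\nu)$ satisfy $\nu>\mu\ge\l$).

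\textbf{Inductive step of (b).} Assume $T(\mu)$ has been constructed for every $\mu<\l$ with the two required exact sequences. To build $T(\l)$ I would use the $(A,R)$-injective hull $I:=\Hom_R(A,\St(\l))$: by Proposition \ref{relativeinjectiveshavefiltrations}, $I\in\mathcal{F}(\Cssim)$, and using the dual of \citep[Proposition B.0.6]{cruz2021cellular} one can arrange a $\Cssim$-filtration of $I$ whose factors $\Cs(\mu)\otimes_R V_\mu$ with $\mu\le\l$ appear at the bottom. Defining $T(\l)$ as the submodule of $I$ corresponding to that bottom piece yields $T(\l)\in\mathcal{F}(\Cssim)$ with $\Cs(\l)$ at the top and kernel $Y(\l)\in\mathcal{F}(\Cssim_{\mu<\l})$, giving the second short exact sequence. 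The first sequence and the membership $T(\l)\in\mathcal{F}(\Stsim)$ then follow from Theorem \ref{filtrationsintermsofext}(1) once one verifies $\Ext_A^1(T(\l),\Cs(\nu))=0$ for every $\nu\in\L$; the inductively available $T(\mu)$'s are used to identify the $\St$-factors lying above $\St(\l)$ in the resulting $\Stsim$-filtration.

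\textbf{For (a),} given a $\Stsim$-filtration $0=M_0\subset M_1\subset\cdots\subset M_n=M$ with $M_k/M_{k-1}\simeq\St(\l_k)\otimes_R U_k$, I induct on $n$. When $n=1$, $M\simeq\St(\l)\otimes_R U$ with $U\in R\proj$, and $T_M:=T(\l)\otimes_R U$ is partial tilting (tensoring with $U\in R\proj$ preserves both filtrations); the embedding and cokernel come directly from (b). When $n>1$, form the pushout of $0\to M_{n-1}\to M\to \St(\l_n)\otimes_R U_n\to 0$ along the inductively constructed embedding $M_{n-1}\hookrightarrow T_{M_{n-1}}$. The resulting sequence $0\to T_{M_{n-1}}\to Z\to \St(\l_n)\otimes_R U_n\to 0$ splits by Lemma \ref{extvanishesforfiltratedmodules}(c), giving $Z\simeq T_{M_{n-1}}\oplus(\St(\l_n)\otimes_R U_n)$. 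Embedding the second summand into $T(\l_n)\otimes_R U_n$ via (b) and setting $T_M:=T_{M_{n-1}}\oplus(T(\l_n)\otimes_R U_n)$ produces a partial tilting module containing $M$, whose cokernel is an extension of $T_{M_{n-1}}/M_{n-1}$ (filtered by $\St$ by induction) and $X(\l_n)\otimes_R U_n$ (coming from (b)).

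\textbf{Main obstacle.} The delicate part is the inductive step of (b). Over a field one would simply take a naive universal extension $0\to\St(\l)\to E\to\bigoplus_{\mu<\l}\St(\mu)^{n_\mu}\to 0$ with $n_\mu=\dim\Ext_A^1(\St(\mu),\St(\l))$ to kill the Ext-obstruction to $\St(\l)\in\mathcal{F}(\Cssim)$. Over a Noetherian base, however, the Ext-modules $\Ext_A^1(\St(\mu),\St(\l))$ need not be projective (let alone finitely generated free) over $R$, so this construction fails. Routing the construction through the $(A,R)$-injective hull and its $\Cssim$-filtration sidesteps the issue, because Proposition \ref{relativeinjectiveshavefiltrations} ensures every factor $V_\mu$ lies in $R\proj$. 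A secondary subtlety is that the cokernel $X(\l)$ in (b) is required to be filtered by plain $\St(\mu)$ (rather than $\Stsim_{\mu<\l}$), which requires careful bookkeeping on the indexing of the $\Cssim$-filtration of $I$ and on the tensor factors appearing.
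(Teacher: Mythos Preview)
Your main obstacle is misdiagnosed, and the workaround you propose has a genuine gap. The universal-extension construction does \emph{not} fail over a Noetherian base: one simply chooses a \emph{free} $R$-module $U$ together with a surjection $\pi\colon U\twoheadrightarrow \Ext_A^1(\St(\mu),N)$ (no projectivity of the Ext-group is needed), and takes the extension $0\to N\to X\to \St(\mu)\otimes_R U\to 0$ corresponding to $\pi$ under the isomorphism $\Hom_R(U,\Ext_A^1(\St(\mu),N))\simeq\Ext_A^1(\St(\mu)\otimes_R U,N)$. In the long exact sequence of $\Hom_A(\St(\mu),-)$ the connecting map identifies with $\pi$ via $\End_A(\St(\mu))\simeq R$, so $\Ext_A^1(\St(\mu),X)=0$; together with $\Ext_A^1(\St(\mu),\St(\mu))=0$ this is all that is required. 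This is precisely the paper's argument: starting from $T_i=\St(\l)$ one iterates these universal extensions downward through the indices $i-1,i-2,\ldots,1$, obtaining $T(\l)\in\mathcal{F}(\St_{\mu\le\l})$ with $\Ext_A^1(\St(\mu),T(\l))=0$ for all $\mu$, whence $T(\l)\in\mathcal{F}(\Cssim)$ by Theorem~\ref{filtrationsintermsofext}. The same construction with an arbitrary $M\in\mathcal{F}(\Stsim)$ in place of $\St(\l)$ gives (a) directly and ensures $\coker i_M\in\mathcal{F}(\St)$ because the $U_i$ are free.

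Your alternative route through $I=\Hom_R(A,\St(\l))$ does not produce a partial tilting module. If $T(\l)$ is the submodule of $I$ whose $\Cssim$-filtration has factors $\Cs(\mu)\otimes_R V_\mu$ for $\mu\le\l$, there is no reason for $\Ext_A^1(T(\l),\Cs(\nu))$ to vanish: the $(A,R)$-injectivity of $I$ controls $\Ext_{(A,R)}^{>0}(-,I)$, not $\Ext_A^{>0}(I,-)$ or $\Ext_A^{>0}(T(\l),-)$, and the long exact sequence attached to $0\to T(\l)\to I\to I/T(\l)\to 0$ gives you nothing here. Moreover the top factor of your $T(\l)$ is $\Cs(\l)\otimes_R V_\l$ with $V_\l\simeq\Hom_A(\St(\l),I)\simeq\Hom_R(\St(\l),\St(\l))$, which has $R$-rank $>1$ as soon as $\St(\l)$ does; so (\ref{eq121b}) would fail. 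Your inductive scheme for (a) is a reasonable alternative once (b) is in hand, but as written the cokernel lands only in $\mathcal{F}(\Stsim)$ (you tensor $X(\l)$ by the projective $U$), not in $\mathcal{F}(\St)$; this is easily repaired by enlarging to free modules, but it is automatic in the paper's direct construction.
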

	\begin{proof} We follow the same argument given in \citep[Proposition 4.26]{Rouquier2008}. There is a subtle difference, here, we argue that $T$ can be constructed using only free modules in the filtration.
		Let $M\in \mathcal{F}(\Stsim)$. Fix $\L\rightarrow \{1, \ldots, t\}$, $\l\mapsto i_\l$ an increasing bijection and set $\St_{i_\l}:=\St(\l)$. We construct by induction an object $T_M$ with a filtration \begin{align}
			0=T_{n+1}\subset M=T_n\subset \cdots \subset T_0=T_M, \quad T_{i-1}/T_i\simeq \St_i\otimes_R U_i, \ U_i\in \free.
		\end{align}
		The case $n=1$ follows from $\Ext_A^1(\St_1, \St_1)=0$ and the fact that $\St_1\otimes_R U_1$ is a direct sum of copies of $\St_1$ for any $U_1\in R\proj$. Assume $n>1$. Assume $T_i$ is defined for some $i$, $2\leq i\leq n$. We shall construct $T_{i-1}$. Let $U_i$ be a free $R$-module defined by the following map $U_i\xrightarrow{\pi} \Ext_A^1(\St_i, T_i)$ being surjective. Consider the extension
		\begin{align}
			0\rightarrow T_i\rightarrow X\rightarrow \St_i\otimes_R U_i\rightarrow 0\label{eq125}
		\end{align}corresponding to $\pi$ via the isomorphism $\Hom_R(U_i, \Ext_A^1(\St_i, T_i))\rightarrow \Ext_A^1(\St_i\otimes_R U_i, T_i)$ (see for example \citep[Lemma B.0.2]{cruz2021cellular}). In particular, \mbox{$\Ext_A^1(\St_i, X)=0$.} Define $T_{i-1}=X$. Since $T_i\in \mathcal{F}(\St_{j>i})$ we obtain that $T_{i-1}\in \mathcal{F}(\St_{j\geq i})$. On the other hand, for $j>i$, applying $\Hom_A(\St_j, -)$ to (\ref{eq125}) yields \begin{align}
			\Hom_A(\St_j, \St_i\otimes_R U_i)\rightarrow \Ext_A^1(\St_j, T_i)\rightarrow \Ext_A^1(\St_j, X)\rightarrow \Ext_A^1(\St_j, \St_i\otimes_R U_i)=0.
		\end{align}We can assume by induction that $\Ext_A^1(\St_j, T_i)=0$ for $j>i$. Hence, $\Ext_A^1(\St_j, T_{i-1})=0$ for $j>i$. Hence, $\Ext_A^1(\St_j, T_{i-1})=0$ for all $j\geq i$. Hence, by induction, we obtain a module $T_M\in \mathcal{F}(\St)$ with $\Ext_A^1(\St_j, T_M)=0$ for all $j$ and satisfying $T_M/M\in \mathcal{F}(\St)$. By Theorem \ref{filtrationsintermsofext}, $T_M$ is partial tilting.
		
		Now consider $M=\St(\l)=\St_i=T_i$. Notice that we can start the construction of $T$ at $i$ since for $j>i$ we have $\Ext_A^1(\St_j, \St_i)=0$. Applying the previous construction we have a filtration \begin{align}
			0\subset \St_i=T_i\subset T_{i-1}\subset \cdots \subset T_0=T(i), \ T_{j-1}/T_j\simeq \St_j\otimes_R F_j, \ F_j\in R\free 
		\end{align} with $T(i)$  being a partial tilting module.  
		Since $T(i)\in \mathcal{F}(\Cssim)$ there exists a filtration $
		0=I_0\subset I_1\subset \cdots \subset I_n=T(i)$ with $I_j/I_{j-1}\simeq \Cs_j\otimes_R U_j, \ 1\leq j \leq n.
		$ Consider the exact sequences \begin{align}
			0\rightarrow I_{j-1}\rightarrow I_j\rightarrow \Cs_j\otimes_R U_j\rightarrow 0, \ 1\leq j\leq n.\label{eqqh128}
		\end{align} It is enough to show that $U_j=0$ for $j>i$ and $U_i\simeq R$. Let $1\leq k\leq n$. Applying the functor $\Hom_A(\St_k, -)$ we obtain the exact sequences
		\begin{align}
			0\rightarrow \Hom_A(\St_k, I_{j-1})\rightarrow \Hom_A(\St_k, I_j)\rightarrow \Hom_A(\St_k, \Cs_j\otimes_R  U_j)\rightarrow \Ext_A^1(\St_k, I_{j-1})=0.
		\end{align}Hence, for $k\neq j$ we obtain \begin{align}
			\Hom_A(\St_k, I_{j-1})\simeq \Hom_A(\St_k, I_j).\label{eq131}
		\end{align} For $k=j$, the following is exact\begin{align}
			0\rightarrow \Hom_A(\St_j, I_{j-1})\rightarrow \Hom_A(\St_j, I_j)\rightarrow U_j\rightarrow 0, \ 1\leq j\leq n. \label{eq132}
		\end{align}
		Combining (\ref{eq131}) with (\ref{eq132}) we obtain the exact sequence
		\begin{align}
			0\rightarrow \Hom_A(\St_k, \Cs_1\otimes_R U_1)\rightarrow \Hom_A(\St_k, T(i))\rightarrow U_k\rightarrow 0, \ k>1. \label{eq133}
		\end{align}
		If $k>i$ then since $T(i)\in \mathcal{F}(\St_{j\leq i})$ we obtain $\Hom_A(\St_k, T(i))=0$. By (\ref{eq133}), $U_j=0$ for $j>i$.
		
		If $i=1$, it follows by (\ref{eqqh128}) and (\ref{eq131})\begin{align}
			U_1\simeq \Hom_A(\St_1, I_1)\simeq \Hom_A(\St_1, I_n)=\Hom_A(\St_1, T(1))=\Hom_A(\St_1, \St_1)\simeq R.
		\end{align} 
		Assume $i>1$. By (\ref{eq133}), $U_i=\Hom_A(\St_i, T(i))$.  Finally, observe that $\Hom_A(\St_i, T(i))=R$. In fact, using the exact sequence constructed $0\rightarrow\St_i\rightarrow T(i)\rightarrow X(i)\rightarrow 0$, every morphism in $\Hom_A(\St_i, T(i))$ factors through $\St_i$ since $X(i)\in \mathcal{F}(\St_{j<i})$. 
	\end{proof}
	
	We say that $T=\sumSt T(\l)$ is a \textbf{characteristic tilting module} of $(A, \{\Delta(\lambda)_{\lambda\in \Lambda}\})$ (or just of $A$ when there is no confusion on the underlying quasi-hereditary structure of $A$), if each $T(\l)$ is a partial tilting with exact sequences as in Theorem \ref{tiltingconstruction}, where we can relax the conditions on $X(\l)$ and $Y(\l)$ to $X(\l)\in \mathcal{F}(\Stsim_{\mu<\l})$ and $Y(\l)\in \mathcal{F}(\Cssim_{\mu<\l})$.
	As we will see, a characteristic tilting module is a full tilting module justifying the modules $T(\l)$ being called (partial) tilting.

	In practice, the short exact sequences (\ref{tiltingconstruction}) provide a way for determining the  (partial) tilting modules. But, as we will see next, these short exact sequences are also approximations. Recall that given a subcategory of $A\m$, $\mathcal{C}$, and a module $M\in A\m$, a \textbf{left $\mathcal{C}$-approximation} of $M$ (if it exists) is a map $f\colon M\rightarrow N$ so that the induced map $\Hom_A(f, C)$ is surjective for any $C\in \mathcal{C}$ with $N\in \mathcal{C}$.

	\begin{Prop}\label{approximationstilting}
		Let $(A, \{\Delta(\lambda)_{\lambda\in \Lambda}\})$ be a split quasi-hereditary algebra. Let $\l\in\L$.
		
		The homomorphism $\St(\l)\hookrightarrow T(\l)$ constructed in Proposition \ref{tiltingconstruction} is an injective left $\mathcal{F}(\Cssim)$-approximation of $\St(\l)$. The homomorphism $T(\l)\rightarrow \Cs(\l)$ constructed in Proposition \ref{tiltingconstruction} is a surjective right $\mathcal{F}(\Stsim)$-approximation of $\Cs(\l)$.
	\end{Prop}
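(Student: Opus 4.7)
The plan is to apply the appropriate $\Hom$ functor to each of the two short exact sequences from Proposition \ref{tiltingconstruction} and use the $\Ext$-vanishing between $\mathcal{F}(\Stsim)$ and $\mathcal{F}(\Cssim)$ recorded in Lemma \ref{extvanishesforfiltratedmodules}. Both $T(\l)$'s lie in $\mathcal{F}(\Stsim)\cap\mathcal{F}(\Cssim)$ by construction, so the target objects of the claimed approximations already sit in the correct subcategory; only the lifting property remains to check.

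First I would address the left $\mathcal{F}(\Cssim)$-approximation claim. Starting from
\begin{equation*}
0\rightarrow \St(\l)\rightarrow T(\l)\rightarrow X(\l)\rightarrow 0
\end{equation*}
with $X(\l)\in \mathcal{F}(\Stsim_{\mu<\l})\subseteq \mathcal{F}(\Stsim)$, I would apply $\Hom_A(-,N)$ for an arbitrary $N\in\mathcal{F}(\Cssim)$ to obtain the long exact sequence
\begin{equation*}
\Hom_A(T(\l),N)\rightarrow \Hom_A(\St(\l),N)\rightarrow \Ext_A^1(X(\l),N).
\end{equation*}
By Lemma \ref{extvanishesforfiltratedmodules}(c), $\Ext_A^1(X(\l),N)=0$, so the restriction map is surjective. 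Since $T(\l)$ is partial tilting and the inclusion $\St(\l)\hookrightarrow T(\l)$ is, by construction, an honest monomorphism, this gives exactly the desired injective left $\mathcal{F}(\Cssim)$-approximation.

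For the right $\mathcal{F}(\Stsim)$-approximation claim I would argue symmetrically from
\begin{equation*}
0\rightarrow Y(\l)\rightarrow T(\l)\rightarrow \Cs(\l)\rightarrow 0
\end{equation*}
with $Y(\l)\in \mathcal{F}(\Cssim_{\mu<\l})\subseteq \mathcal{F}(\Cssim)$. Applying $\Hom_A(M,-)$ for an arbitrary $M\in\mathcal{F}(\Stsim)$ yields
\begin{equation*}
\Hom_A(M,T(\l))\rightarrow \Hom_A(M,\Cs(\l))\rightarrow \Ext_A^1(M,Y(\l)),
\end{equation*}
and again Lemma \ref{extvanishesforfiltratedmodules}(c) kills the $\Ext^1$ term. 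Since $T(\l)\in\mathcal{F}(\Stsim)$ and the map $T(\l)\twoheadrightarrow \Cs(\l)$ is surjective by construction, this is a surjective right $\mathcal{F}(\Stsim)$-approximation.

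There is essentially no obstacle here: the content of the proposition is just the combination of the filtration data from Proposition \ref{tiltingconstruction} with the Ext-orthogonality of Lemma \ref{extvanishesforfiltratedmodules}. The only thing one has to be mildly careful about is that $X(\l)$ and $Y(\l)$ genuinely lie in $\mathcal{F}(\Stsim)$ and $\mathcal{F}(\Cssim)$ respectively (not just in the smaller subcategories indexed by $\mu<\l$), which is immediate from the inclusions $\mathcal{F}(\Stsim_{\mu<\l})\subseteq \mathcal{F}(\Stsim)$ and $\mathcal{F}(\Cssim_{\mu<\l})\subseteq \mathcal{F}(\Cssim)$.
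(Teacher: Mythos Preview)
Your proof is correct and follows essentially the same approach as the paper: apply $\Hom_A(-,X)$ (resp.\ $\Hom_A(Y,-)$) to the exact sequences (\ref{eq121}) and (\ref{eq121b}) and invoke Lemma~\ref{extvanishesforfiltratedmodules} to kill the relevant $\Ext^1$ term. The paper's argument is the same, only slightly terser.
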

	\begin{proof}
		Let $X\in \mathcal{F}(\Cssim)$. Applying $\Hom_A(-, X)$ to (\ref{eq121}) yields the exact sequence
		\begin{align}
			0\rightarrow \Hom_A(X(\l), X)\rightarrow \Hom_A(T(\l), X)\rightarrow \Hom_A(\St(\l), X)\rightarrow \Ext_A^1(X(\l), X).
		\end{align}By Lemma \ref{extvanishesforfiltratedmodules}, $\Ext_A^1(X(\l), X)=0$  since $X(\l)\in \mathcal{F}(\Stsim)$. 
		Thus, $\Hom_A(T(\l), X)\rightarrow \Hom_A(\St(\l), X)$ is surjective. 
		
		Let $Y\in \mathcal{F}(\Stsim)$. Applying $\Hom_A(Y, -)$ to (\ref{eq121b}) yields that the map ${\Hom_A(Y, T(\l))\rightarrow \Hom_A(Y, \Cs(\l))}$ is surjective.
	\end{proof}
	
	There is naturally a version of Corollary \ref{dualofstandardiscostandard} and Proposition \ref{existenceofcostandardmodules} for partial tilting modules.
	
	\begin{Lemma}\label{characteristictiltingforaop}
		Let $(A, \{\Delta(\lambda)_{\lambda\in \Lambda}\})$ be a split quasi-hereditary algebra and let $T$ be a partial tilting module. Then, $DT$ is a partial tilting module in the split highest weight category $(A^{op}, \{{D\Cs(\l)}_{\lambda\in \Lambda}\})$. Moreover, if $T$ is a characteristic tilting module over $A$, then $DT$ is a characteristic tilting module over $A^{op}$.
	\end{Lemma}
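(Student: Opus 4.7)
My plan is first to identify, via Proposition \ref{existenceofcostandardmodules}(i) together with Corollary \ref{dualofstandardiscostandard} applied to $(A, \{\St(\l)_{\l\in \L}\})$, that in the split highest weight category $(A^{op}, \{D\Cs(\l)_{\l\in \L}\})$ the standard modules are the $D\Cs(\l)$ and the costandard modules are the $D\St(\l)$. Under this identification, being partial tilting in $A^{op}$ means lying in $\mathcal{F}_{A^{op}}(\widetilde{D\Cs})\cap \mathcal{F}_{A^{op}}(\widetilde{D\St})$, and the characteristic tilting condition is given by the analogue of the two short exact sequences in Proposition \ref{tiltingconstruction}.

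Next I transfer the filtrations through $D$. By Lemma \ref{extvanishesforfiltratedmodules}(d) every module in $\mathcal{F}(\Stsim)\cup \mathcal{F}(\Cssim)$ is $R$-projective, so the contravariant functor $D=\Hom_R(-,R)$ is exact on all the short exact sequences appearing in their filtrations. Using the natural isomorphism $D(N\otimes_R U)\simeq DN\otimes_R DU$ for $N,U\in R\proj$, a filtration of $M\in \mathcal{F}(\Stsim)$ with factors $\St(\l_i)\otimes_R U_{\l_i}$ induces a filtration of $DM$ with factors $D\St(\l_i)\otimes_R DU_{\l_i}$, where $DU_{\l_i}\in R\proj$. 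Thus $DM\in \mathcal{F}_{A^{op}}(\widetilde{D\St})$, and symmetrically $D$ maps $\mathcal{F}(\Cssim)$ into $\mathcal{F}_{A^{op}}(\widetilde{D\Cs})$. Applied to $T\in \mathcal{F}(\Stsim)\cap \mathcal{F}(\Cssim)$, this shows $DT$ is partial tilting in $(A^{op}, \{D\Cs(\l)_{\l\in\L}\})$.

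For the moreover part, write $T=\sumSt T(\l)$ and, for each $\l\in\L$, apply $D$ to the two defining short exact sequences
\begin{align*}
0\to \St(\l)\to T(\l)\to X(\l)\to 0, \qquad 0\to Y(\l)\to T(\l)\to \Cs(\l)\to 0,
\end{align*}
where $X(\l)\in \mathcal{F}(\Stsim_{\mu<\l})$ and $Y(\l)\in \mathcal{F}(\Cssim_{\mu<\l})$. Since all the modules involved are $R$-projective, the dualised sequences $0\to DX(\l)\to DT(\l)\to D\St(\l)\to 0$ and $0\to D\Cs(\l)\to DT(\l)\to DY(\l)\to 0$ remain exact. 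By the step above applied to the subposet $\{\mu\in\L\colon \mu<\l\}$, we get $DX(\l)\in \mathcal{F}_{A^{op}}(\widetilde{D\St}_{\mu<\l})$ and $DY(\l)\in \mathcal{F}_{A^{op}}(\widetilde{D\Cs}_{\mu<\l})$, which are exactly the filtration conditions required for $DT(\l)$ to be the characteristic tilting module indexed by $\l$ in $A^{op}$. Summing over $\l$ yields that $DT=\sumSt DT(\l)$ is a characteristic tilting module over $A^{op}$. The only real subtlety is the bookkeeping of which modules play the role of standard versus costandard after dualising; once the identifications of the first paragraph are fixed, the poset order on $\L$ is inherited unchanged and the rest is routine.
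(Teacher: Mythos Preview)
Your proof is correct and follows essentially the same strategy as the paper: identify the standard/costandard modules of $(A^{op},\{D\Cs(\l)\})$ via Proposition~\ref{existenceofcostandardmodules} and Corollary~\ref{dualofstandardiscostandard}, then dualise. The only difference is that for the first assertion the paper invokes the Ext-vanishing criterion of Theorem~\ref{filtrationsintermsofext} (together with the duality $\Ext^i_{A^{op}}(DN,DM)\simeq \Ext^i_A(M,N)$) rather than dualising the filtrations directly as you do; your direct argument is equally valid and arguably more elementary. For the characteristic tilting part both proofs coincide: dualise the $(A,R)$-exact sequences (\ref{eq121}) and (\ref{eq121b}).
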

	\begin{proof}
		By Theorem \ref{filtrationsintermsofext}, $DT\in \mathcal{F}(D\Stsim)\cap \mathcal{F}(D\Cssim)$. Assume that $T$ is a characteristic tilting module. The exact sequences (\ref{eq121}) and (\ref{eq121b}) are $(A, R)$-exact since $X(\l), \Cs(\l)\in R\proj$. Applying $D$, it follows that $DT$ is a characteristic tilting module over $A^{op}$.
	\end{proof}
	
	\subsubsection{Characterizations of $\mathcal{F}(\Stsim)$ and $\mathcal{F}(\Cssim)$ in terms of characteristic tilting modules}
	
	Let $\mathcal{C}$ be a subcategory of $A\m$. In the following, we will denote by $\widehat{\mathcal{C}}$ the subcategory of $A\m$ whose modules $M$ fit into an exact sequence $0\rightarrow C_t\rightarrow C_{t-1}\rightarrow \cdots\rightarrow C_0\rightarrow M\rightarrow 0$ with all $C_i\in \mathcal{C}$. Dually, we will consider the subcategory $\widecheck{\mathcal{C}}$.
	
	\begin{Lemma}\label{lemmaforresultiontilting}
		Let $(A, \{\Delta(\lambda)_{\lambda\in \Lambda}\})$ be a split quasi-hereditary algebra. Assume that $T$ is a characteristic tilting module.	Let $X, Z\in \widehat{\add T}\cap R\proj$. Assume there is an exact sequence $
		0\rightarrow X\xrightarrow{k}Y\xrightarrow{\pi}Z\rightarrow 0.
		$ Then, $Y\in \widehat{\add T}\cap R\proj$.
	\end{Lemma}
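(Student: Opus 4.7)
The plan is to argue by induction on $n(X)+n(Z)$, where $n(M)$ denotes the minimal length of a resolution of $M$ by modules in $\add T$. Before starting the induction, I would note that since $Z\in R\proj$, the sequence $0\to X\to Y\to Z\to 0$ is $R$-split, so $Y\in R\proj$ automatically. Thus I only need to produce a resolution of $Y$ by modules in $\add T$.

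For the base case $n(X)=n(Z)=0$, i.e.\ $X,Z\in \add T$, I would use that $\add T\subseteq \mathcal{F}(\Stsim)\cap \mathcal{F}(\Cssim)$ and invoke Lemma \ref{extvanishesforfiltratedmodules}(c) to get $\Ext_A^1(Z,X)=0$. Hence the given sequence splits and $Y\simeq X\oplus Z\in \add T\subseteq \widehat{\add T}$.

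For the inductive step I would choose exact sequences $0\to K_X\to T_0^X\to X\to 0$ and $0\to K_Z\to T_0^Z\to Z\to 0$ with $T_0^X,T_0^Z\in \add T$ and $K_X,K_Z\in \widehat{\add T}$ of strictly shorter resolution length; since $X,Z,T_0^X,T_0^Z\in R\proj$, the kernels $K_X,K_Z$ also lie in $R\proj$. The key step is to lift the map $T_0^Z\to Z$ along $Y\to Z$, which requires showing $\Ext_A^1(T_0^Z,X)=0$. I would establish this by dimension-shifting along the given $\add T$-resolution of $X$: writing $M_0=X$ and $M_{i+1}$ for the kernel of $T_i^X\to M_i$, the vanishing $\Ext_A^{>0}(T_0^Z,T_j^X)=0$ (Lemma \ref{extvanishesforfiltratedmodules}(c), using $T_0^Z\in \mathcal{F}(\Stsim)$ and $T_j^X\in \mathcal{F}(\Cssim)$) yields isomorphisms $\Ext_A^1(T_0^Z,X)\simeq \Ext_A^{n+1}(T_0^Z,T_n^X)=0$. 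This is the main obstacle, and it is the one place where the characteristic Ext-orthogonality plays a decisive role.

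With the lift in hand, I would form a surjection $\pi\colon T_0^X\oplus T_0^Z\twoheadrightarrow Y$ via the embedding $T_0^X\to X\hookrightarrow Y$ on the first summand and the chosen lift on the second. Applying the snake lemma to the commutative diagram
\begin{equation*}
\begin{tikzcd}
0 \arrow[r] & T_0^X \arrow[r] \arrow[d] & T_0^X\oplus T_0^Z \arrow[r] \arrow[d, "\pi"] & T_0^Z \arrow[r] \arrow[d] & 0 \\
0 \arrow[r] & X \arrow[r] & Y \arrow[r] & Z \arrow[r] & 0
\end{tikzcd}
\end{equation*}
produces an exact sequence $0\to K_X\to W\to K_Z\to 0$ for $W:=\ker \pi$, and $W\in R\proj$ because it extends two modules in $R\proj$. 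Since $n(K_X)+n(K_Z)<n(X)+n(Z)$, the inductive hypothesis gives $W\in \widehat{\add T}\cap R\proj$. Splicing a resolution $0\to T_r\to\cdots\to T_0\to W\to 0$ by $\add T$-modules with $0\to W\to T_0^X\oplus T_0^Z\to Y\to 0$ finally produces the desired $\add T$-resolution of $Y$, completing the induction.
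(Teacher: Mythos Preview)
Your proof is correct and follows essentially the same Horseshoe-type construction as the paper: build a surjection $T_0^X\oplus T_0^Z\twoheadrightarrow Y$ by lifting $T_0^Z\to Z$ along $\pi$, apply the snake lemma to obtain an exact sequence on kernels, and iterate. The paper carries this out step by step and handles termination explicitly (once one of the two resolutions reaches $\add T$, the remaining kernel sequence splits by Ext-vanishing), whereas you package the same recursion as an induction on $n(X)+n(Z)$; these are equivalent organisations of the same argument.

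There is one genuine, if minor, difference worth noting. To obtain $\Ext_A^1(T_0^Z,X)=0$, the paper asserts $X\in\mathcal{F}(\Cssim)$ and invokes Lemma~\ref{extvanishesforfiltratedmodules}(c) directly; this implicitly uses the inclusion $\widehat{\add T}\cap R\proj\subset\mathcal{F}(\Cssim)$, which relies on $\mathcal{F}(\Cssim)$ being closed under cokernels of $(A,R)$-monomorphisms. Your dimension-shifting along the $\add T$-resolution of $X$ avoids this detour and is more self-contained, using only $\Ext_A^{>0}(T_0^Z,T_j^X)=0$ for modules already known to lie in $\add T$. Both are valid; yours is marginally cleaner from a logical-dependency standpoint, while the paper's is shorter once the coresolving property is in hand. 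One cosmetic point: in your inductive step you should allow $n(X)=0$ (taking $T_0^X=X$, $K_X=0$) rather than insisting both $K_X,K_Z$ have \emph{strictly} shorter length, but this does not affect the validity of the induction on the sum.
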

	\begin{proof}
		Consider the following diagram with exact rows and columns
		\begin{equation*}
			\begin{tikzcd}
				0\arrow[r] & X \arrow[r, "k"] &Y\arrow[r, "\pi"] &Z\arrow[r]& 0\\
				0\arrow[r] & T_0'\arrow[u, "p_0'", twoheadrightarrow] \arrow[r, "k_0"]& T_0'\bigoplus T_0''\arrow[r, "\pi_0"]& T_0'' \arrow[u, twoheadrightarrow, "p_0''"] \arrow[r]& 0\\
				& K_0'\arrow[u, hookrightarrow] & & K_0''\arrow[u, hookrightarrow]& 
			\end{tikzcd}
		\end{equation*} with $T_0', T_0''\in \mathcal{F}(\Stsim)\cap \mathcal{F}(\Cssim)$.
		Applying $\Hom_A(T_0'', -)$ to the top row yields
		\begin{align}
			0\rightarrow \Hom_A(T_0'', X)\rightarrow \Hom_A(T_0'', Y)\rightarrow \Hom_A(T_0'', Z)\rightarrow \Ext_A^1(T_0'', X)=0.
		\end{align}This is an immediate consequence of $T_0''\in \mathcal{F}(\Stsim)$ and $X\in \mathcal{F}(\Cssim)$. Hence, the map $p_0''$ lifts to $f\in \Hom_A(T_0'', Y)$ such that $p_0''=\pi\circ f$. Now consider $g\colon T_0'\bigoplus T_0''\rightarrow Y$, given by $g(x, y)=k\circ p_0'(x)+f(y), \ (x, y)\in T_0'\bigoplus T_0''$. Then, for $(x, y)\in T_0'\bigoplus T_0''$,
		\begin{align}
			g\circ k_0(x)=g(x, 0)&=k\circ p_0'(x)\\
			\pi\circ g(x, y)=\pi(k\circ p_0'(x)+f(y))=\pi\circ f(y)=p_0''(y)&=p_0''\circ \pi_0(x, y).
		\end{align}
		Hence, $g$ makes the previous diagram commutative. By Snake Lemma, $g$ is surjective. Define $K_0=\ker g$. $k_0|_{K_0'}\colon K_0'\rightarrow K_0$ is well defined and it is clearly a monomorphism since 
		$g\circ k_0(x)=k\circ p_0'(x)=k(0)=0, \ x\in K_0.$
		Now $\pi_0|_{K_0}\colon K_0\rightarrow K_0''$ is well defined since
		$
		p_0''\circ \pi_0|_{K_0}(x, y)=p_0''\circ \pi_0(x, y)=\pi\circ g(x, y)=0, \ (x, y)\in K_0.
		$
		Therefore, we have the commutative diagram with exact columns and the two top rows exact,
		\begin{equation*}
			\begin{tikzcd}
				0\arrow[r] & X \arrow[r, "k"] &Y\arrow[r, "\pi"] &Z\arrow[r]& 0\\
				0\arrow[r] & T_0'\arrow[u, twoheadrightarrow, "p_0'"] \arrow[r, "k_0"]& T_0'\bigoplus T_0''\arrow[u,twoheadrightarrow, "g"]\arrow[r, "\pi_0"]& T_0'' \arrow[u, twoheadrightarrow,"p_0''"] \arrow[r]& 0\\
				0\arrow[r]	& K_0'\arrow[u, hookrightarrow]\arrow[r, "k_0|_{K_0'}"] & K_0\arrow[u, hookrightarrow] \arrow[r, "\pi_0|_{K_0}"] & K_0''\arrow[u, hookrightarrow]\arrow[r]& 0 
			\end{tikzcd}.
		\end{equation*}
		Let $y\in K_0''$. Then,$
		\pi\circ g(0, y)=\pi\circ f(y)=p_0''(y)=0.$Thus, $g(0, y)=k(p_0'(t))=g\circ k_0(t)$ for some $t\in T_0'$. Hence, $(0, y)-k_0(t)\in K_0$ and its image under $\pi_0$ is $y$. Thus, $\pi_0|_{K_0}$ is surjective.
		Let $(x, y)\in \ker \pi|_{K_0}$. Then, $(x, y)\in K_0\cap \im k_0$, so there exists $z\in T_0'$ such that $k_0(z)=(x,y)$. Thus,
		$
		k\circ p_0'(z)=g\circ k_0(z)=0, 
		$ and consequently $p_0'(z)=0.$ Thus, $z\in K_0'$. So, the bottom row is also exact.
		
		Now continue with the construction with the bottom row. Note that both $K_0', K_0''$ have partial tilting resolutions by construction. After a finite number of steps either we must proceed with an exact sequence \begin{align}
			0\rightarrow K_t'\rightarrow K_t\rightarrow K_t''\rightarrow 0,
		\end{align}with  $K_t'\in \add T, K_t''\in \mathcal{F}(\Cssim)$ or  $T_{t+1}''=K_t''\in \add T, K_t'\in \mathcal{F}(\Cssim)$. In the first case, proceed one more step and we end up with $K_{t+1}\simeq K_{t+1}''$. So,\begin{align}
			0\rightarrow T_r''\rightarrow \cdots \rightarrow T_{t+2}'' \rightarrow T_{t+1}'\bigoplus T_{t+1}''\rightarrow \cdots \rightarrow T_0'\bigoplus T_0''\rightarrow Y\rightarrow 0
		\end{align}is a partial tilting resolution for $Y$. In the second case, $\Ext_A^1(K_t'', K_t')=0$, so it splits, that is $K_t\simeq K_t''\bigoplus K_t'$. Hence
		\begin{align}
			0\rightarrow T_r'\rightarrow \cdots \rightarrow T_{t+2}'\rightarrow T_{t+1}'\bigoplus T_{t+1}''\rightarrow \cdots \rightarrow T_0'\bigoplus T_0''\rightarrow Y\rightarrow 0
		\end{align} is a partial tilting resolution for $Y$. The assertion that $Y\in R\proj$ is clear.
	\end{proof}

	\begin{Theorem}\label{severalpropertiestilting}
		Let $(A, \{\Delta(\lambda)_{\lambda\in \Lambda}\})$ be a split quasi-hereditary algebra. Assume that $T$ is a characteristic tilting module. The following assertions hold true.
		\begin{enumerate}[(a)]
			\item $\mathcal{F}(\Stsim)=\{M\in A\m\cap R\proj\colon \Ext_A^{i>0}(M, T)=0 \}$.
			\item $\mathcal{F}(\Stsim)=\widecheck{\add T}$.
			\item $\mathcal{F}(\Cssim)=\{N\in A\m\cap R\proj\colon \Ext_A^{i>0}(T, N)=0 \}$.
			\item $\mathcal{F}(\Cssim)=\widehat{\add T}\cap R\proj$.
			\item $\add T =\mathcal{F}(\Stsim)\cap \mathcal{F}(\Cssim)$.
		\end{enumerate}
	\end{Theorem}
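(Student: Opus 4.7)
The plan is to prove (a) and (c) first, then (b) and (d) by parallel (dual) arguments, and finally to deduce (e) from the preceding parts.

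For (a), the inclusion $\mathcal{F}(\Stsim)\subseteq\{M\in A\m\cap R\proj\colon \Ext_A^{>0}(M,T)=0\}$ is immediate from Lemma \ref{extvanishesforfiltratedmodules}(c) and (d), applied to $T\in \mathcal{F}(\Cssim)$. For the reverse, suppose $M\in R\proj$ satisfies $\Ext_A^{>0}(M,T)=0$; by Theorem \ref{filtrationsintermsofext}(1) it suffices to show $\Ext_A^1(M,\Cs(\l))=0$ for each $\l\in\L$. Fixing a linear extension $\leq'$ of $\L$, I would induct upwards on $\l$, exploiting the exact sequence $0\to Y(\l)\to T(\l)\to \Cs(\l)\to 0$ from Proposition \ref{tiltingconstruction}(b): $\Ext_A^{>0}(M,T(\l))=0$ because $T(\l)\in \add T$, and $Y(\l)\in\mathcal{F}(\Cssim_{\mu<\l})$ together with the inductive hypothesis (iterated over a $\Cs$-filtration of $Y(\l)$) yields $\Ext_A^{>0}(M,Y(\l))=0$; the long exact sequence then delivers $\Ext_A^{>0}(M,\Cs(\l))=0$. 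Statement (c) is proved completely dually, using $0\to\St(\l)\to T(\l)\to X(\l)\to 0$ and Theorem \ref{filtrationsintermsofext}(2), or equivalently by transport through $A^{op}$ via Corollary \ref{dualofstandardiscostandard} and Lemma \ref{characteristictiltingforaop}.

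For (b), the inclusion $\widecheck{\add T}\subseteq \mathcal{F}(\Stsim)$ is a dimension-shifting argument: $R$-projectivity propagates down a finite coresolution by $\add T$ because each short exact sequence is $R$-split, and iterating $\Ext_A^{>0}(T^i,T)=0$ together with dimension shifting gives $\Ext_A^{>0}(M,T)=0$, so (a) applies. The reverse inclusion is the technical core. I would first observe that $\widecheck{\add T}\cap R\proj$ is closed under extensions in $A\m$: this is Lemma \ref{lemmaforresultiontilting} applied to $A^{op}$ and transported back via Lemma \ref{characteristictiltingforaop}. Fix $M\in \mathcal{F}(\Stsim)$, fix a linear extension $\leq'$ of $\L$, and induct on the $\leq'$-maximum $\l^\ast$ in the support of a $\Stsim$-filtration of $M$. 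Using that $\Ext_A^1(\St(\mu),\St(\nu))\ne 0$ forces $\mu<\nu$ (so reordering is admissible past any $\St(\nu)$ with $\nu\not>\l^\ast$), collapse the $\St(\l^\ast)$-factors to the bottom to obtain $0\to \St(\l^\ast)\otimes_R U\to M\to M'\to 0$ in which the $\leq'$-maximum of the support of $M'$ is strictly below $\l^\ast$. By the inductive hypothesis both $M'$ and $X(\l^\ast)\otimes_R U\in \mathcal{F}(\Stsim_{\mu<\l^\ast})$ lie in $\widecheck{\add T}$; splicing a coresolution of $X(\l^\ast)\otimes_R U$ after the exact sequence $0\to \St(\l^\ast)\otimes_R U\to T(\l^\ast)\otimes_R U\to X(\l^\ast)\otimes_R U\to 0$ from Proposition \ref{tiltingconstruction}(b) exhibits $\St(\l^\ast)\otimes_R U\in \widecheck{\add T}$, and closure under extensions then places $M$ in $\widecheck{\add T}$. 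Statement (d) is proved dually, invoking Lemma \ref{lemmaforresultiontilting} directly and the dual of Proposition \ref{tiltingconstruction}(a) obtained via Lemma \ref{characteristictiltingforaop}.

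Finally, (e) drops out from (d). The inclusion $\add T\subseteq \mathcal{F}(\Stsim)\cap \mathcal{F}(\Cssim)$ is definitional. Conversely, given $M\in \mathcal{F}(\Stsim)\cap \mathcal{F}(\Cssim)$, part (d) provides a finite resolution $0\to T_n\to\cdots \to T_0\to M\to 0$ by $\add T$; truncating yields $0\to K\to T_0\to M\to 0$ with $K=\ker(T_0\to M)$. The tail of the resolution exhibits $K\in\widehat{\add T}$, and $K\in R\proj$ because $T_0,M\in R\proj$ force the sequence to be $R$-split. Hence $K\in \mathcal{F}(\Cssim)$ by (d) applied once more, and Lemma \ref{extvanishesforfiltratedmodules}(c) then yields $\Ext_A^1(M,K)=0$; the sequence splits over $A$ and $M\in \add T_0\subseteq \add T$. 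The main obstacle is the forward direction of (b) (dually of (d)): the failure of Krull--Schmidt for $A\m$ precludes any argument via indecomposable summands, forcing the inductive reordering of standard filtrations together with the extension-closure of $\widecheck{\add T}\cap R\proj$ (dually, Lemma \ref{lemmaforresultiontilting}) to close the induction. Once these combinatorial ingredients are in place, the remaining parts follow routinely from dimension shifting and Lemma \ref{extvanishesforfiltratedmodules}(c).
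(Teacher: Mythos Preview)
Your proof is correct and follows essentially the same strategy as the paper. The only organisational difference is that the paper proves (d) directly (by induction on $\l$, showing each $\Cs(\l)\in\widehat{\add T}$ and then invoking Lemma~\ref{lemmaforresultiontilting} once more for arbitrary $N\in\mathcal{F}(\Cssim)$) and then deduces (b) by applying $D$ and Lemma~\ref{characteristictiltingforaop}, whereas you run the mirror-image argument directly on the $\Stsim$-side and declare (d) dual; your stronger inductive hypothesis (over all $M$ with support bounded by $\l^\ast$) folds the paper's final application of Lemma~\ref{lemmaforresultiontilting} into the induction itself. For (e), your conclusion ``the sequence splits, hence $M\in\add T_0\subseteq\add T$'' is cleaner than the paper's detour through projectivization, and is valid because $\add T$ is by definition closed under direct summands.
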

	\begin{proof}
		Let $M\in \mathcal{F}(\Stsim)$. As $T\in \mathcal{F}(\Cssim)$ then $\Ext_A^{i>0}(M, T)=0$ by Lemma \ref{extvanishesforfiltratedmodules}. 
		Conversely, assume that $\Ext_A^{i>0}(M, T)=0$. Then, $\prod_{\l\in\L} \Ext_A^{i>0}(M, T(\l))=0$ and by consequence for each $\l\in \L$, $\Ext_A^{i>0}(M, T(\l))=0$.
		We claim that $\Ext_A^{i>0}(M, \Cs(\l))=0$ for every $\l\in\L$. 
		If $\l$ is minimal, then $T(\l)=\Cs(\l)$, so there is nothing to show. Assume that, for all $\mu<\l$, $\Ext_A^{i>0}(M, \Cs(\mu))=0$. Then, $\Ext_A^i(M, X)=0$ for every $X\in \mathcal{F}(\Cssim_{\mu<\l})$, $i>0$. 
		Applying $\Hom_A(M, -)$ to (\ref{eq121b}) we obtain that $\Ext_A^1(M, \Cs(\l))\simeq \Ext_A^2(M, Y(\l))=0$. By induction, $\Ext_A^1(M, \Cs(\l))=0$ for all $\l\in\L$. By Proposition \ref{filtrationsintermsofext}, $M\in \mathcal{F}(\Stsim)$. Hence, (a) follows. By a symmetric argument, we obtain statement (c).
		
		We will now prove (d). Let $X\in \widehat{\add T}\cap R\proj$. So $X$ has a finite resolution by $\add T$ which splits as sequence of $R$-modules. As $\mathcal{F}(\Cssim)$ is closed under quotients of $(A, R)$-monomorphisms it follows that \mbox{$X\in \mathcal{F}(\Cssim)$.} 
		Now we will show that each costandard module $\Cs(\mu)$ has a partial tilting resolution. If $\l$ is minimal, then \mbox{$\St(\l)=T(\l)=\Cs(\l)$.} So, there is nothing to prove in such a case. Assume by induction that each $\Cs(\mu)$ with $\mu<\l$ has a resolution by partial tilting modules.
		By Lemma \ref{lemmaforresultiontilting}, every module in $\mathcal{F}(\Cs_{\mu<\l})$ has a finite partial tilting resolution. Hence $Y(\l)$, as in Proposition \ref{tiltingconstruction}, has a finite partial tilting resolution. Now using the exact sequence (\ref{eq121b}) and the partial tilting resolution for $Y(\l)$, it follows that $\Cs(\l)$ has a finite partial tilting resolution. Applying Lemma \ref{lemmaforresultiontilting}, it follows that any module in $\mathcal{F}(\Cssim)$ has a partial tilting resolution. So, (d) follows.
		
		We will now proceed to prove (b). Let $M\in \widecheck{\add T}$. Since $\mathcal{F}(\Stsim)$ is closed under kernels of epimorphisms it follows that $M\in \mathcal{F}(\Stsim)$.
		Conversely, assume that $M\in \mathcal{F}(\Stsim)$. Then $DM\in \mathcal{F}(D\Stsim)$. By (d), Corollary \ref{dualofstandardiscostandard} and Lemma \ref{characteristictiltingforaop}, $DM\in \widehat{\add DT}$. Since $M\in R\proj$, $M\simeq DDM\in \widecheck{\add DDT}=\widecheck{\add T}$.
		So, (b) follows.
		
		It remains to prove assertion (e). The inclusion $\add T\subset \mathcal{F}(\Stsim)\cap \mathcal{F}(\Cssim)$ was established in Proposition \ref{tiltingconstruction}. 
		Let $X\in \mathcal{F}(\Stsim)\cap \mathcal{F}(\Cssim)$. By (d), there exists an $(A, R)$-exact sequence $0\rightarrow L\rightarrow T_0\rightarrow X\rightarrow 0$ with $L\in \mathcal{F}(\Cssim)$ and $T_0\in \add T$. Further, since $\mathcal{F}(\Stsim)$ is closed under kernels of epimorphisms $L$ also belongs to $\mathcal{F}(\Stsim)$. In particular, $\Ext_A^1(X, L)=0$ and therefore $T_0\simeq X\oplus L$. Applying $\Hom_A(T, -)$ we obtain that $\Hom_A(T, X)$ is projective over $\End_A(T)^{op}$. Using projectivization together with $T\otimes_{\End_A(T)^{op}} \Hom_A(T, -)$ being left exact on $T_1\rightarrow T_0\rightarrow X\rightarrow 0$ (given by (d)) we obtain by diagram chasing that $X\in \add T$.
	\end{proof}
	
	We should remark that a characteristic tilting module $T$ is in fact a \textbf{full generalized tilting module}, that is, it has finite projective dimension over $A$, it has no self-extensions, that is, $\Ext_A^{i>0}(T, T)=0$ and by Theorem \ref{severalpropertiestilting}(b) there exists an exact sequence $0\rightarrow A\rightarrow T_0\rightarrow \cdots \rightarrow T_r\rightarrow 0$ where $T_i\in \add T$ for all $0\leq i\leq r$ for some $r\in \mathbb{N}$. In the integral setup, characteristic tilting modules are not necessarily unique but they are unique up to "multiplicities".
	
	\begin{Cor}\label{uniquenessofringeldual}
		Let $(A, \{\Delta(\lambda)_{\lambda\in \Lambda}\})$ be a split quasi-hereditary algebra. Assume there are modules $T(\l)$ and $Q(\l), \l\in \L$ with exact sequences
		\begin{align*}
			0\rightarrow \St(\l)\rightarrow T(\l)\rightarrow X(\l)\rightarrow 0\\
			0\rightarrow Y(\l)\rightarrow T(\l)\rightarrow \Cs(\l)\rightarrow 0\\
			0\rightarrow \St(\l)\rightarrow Q(\l)\rightarrow X'(\l)\rightarrow 0\\
			0\rightarrow Y'(\l)\rightarrow Q(\l)\rightarrow \Cs(\l)\rightarrow 0
		\end{align*} where $X(\l), X'(\l)\in \mathcal{F}(\Stsim_{\mu<\l})$ and $Y(\l), Y'(\l)\in \mathcal{F}(\Cssim_{\mu<\l})$. Let $T=\sumSt T(\l), \ Q=\sumSt Q(\l)$. Then, $\add T=\add Q$. Further, $\End_A(T)^{op}$ and $\End_A(Q)^{op}$ are Morita equivalent.
	\end{Cor}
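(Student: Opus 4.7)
The plan is to reduce this corollary almost entirely to Theorem \ref{severalpropertiestilting}(e). That theorem characterises the additive closure of any characteristic tilting module as the intersection $\mathcal{F}(\Stsim)\cap\mathcal{F}(\Cssim)$, a subcategory that is defined purely in terms of the quasi-hereditary structure of $A$ and does not depend on a specific choice of characteristic tilting module. So once I verify that both $T$ and $Q$, as built from the given exact sequences, qualify as characteristic tilting modules in the sense of the definition preceding Proposition \ref{approximationstilting}, the equality $\add T=\add Q$ will be immediate.

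First I would check that $T$ and $Q$ are partial tilting: the exact sequence $0\to\St(\l)\to T(\l)\to X(\l)\to 0$ with $X(\l)\in\mathcal{F}(\Stsim_{\mu<\l})$ and $\St(\l)\in\mathcal{F}(\Stsim)$ shows $T(\l)\in\mathcal{F}(\Stsim)$, while $0\to Y(\l)\to T(\l)\to\Cs(\l)\to 0$ with $Y(\l)\in\mathcal{F}(\Cssim_{\mu<\l})$ and $\Cs(\l)\in\mathcal{F}(\Cssim)$ shows $T(\l)\in\mathcal{F}(\Cssim)$. Hence $T(\l)\in\mathcal{F}(\Stsim)\cap\mathcal{F}(\Cssim)$, and the same argument applies to each $Q(\l)$. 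Since both $T=\sumSt T(\l)$ and $Q=\sumSt Q(\l)$ are thus characteristic tilting modules, Theorem \ref{severalpropertiestilting}(e) yields
\begin{equation*}
\add T\;=\;\mathcal{F}(\Stsim)\cap\mathcal{F}(\Cssim)\;=\;\add Q.
\end{equation*}

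For the Morita equivalence of $\End_A(T)^{op}$ and $\End_A(Q)^{op}$, I would invoke the standard projectivization argument: the functor $\Hom_A(T,-)\colon\add T\to\End_A(T)^{op}\proj$ is an equivalence of categories (with quasi-inverse $T\otimes_{\End_A(T)^{op}}-$), and similarly for $Q$. Since $\add T=\add Q$, composing these two equivalences produces an equivalence $\End_A(T)^{op}\proj\simeq\End_A(Q)^{op}\proj$, which by the usual Morita theorem lifts to a Morita equivalence of the algebras themselves. Concretely, from $\add T=\add Q$ one obtains integers $n,m$ and splittings $T\oplus T'\simeq Q^{n}$ and $Q\oplus Q'\simeq T^{m}$, so each of $T,Q$ is a progenerator in $\add T=\add Q$, and the bimodules $\Hom_A(T,Q)$ and $\Hom_A(Q,T)$ implement the Morita equivalence.

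The only step that requires any care is making sure the projectivization functor lands in (and is an equivalence onto) the subcategory of finitely generated projective modules over $\End_A(T)^{op}$; this is where $T$ being additively closed in $A\m$ under summands plays its role, and it follows from the general fact that $\Hom_A(T,-)$ is fully faithful on $\add T$ and turns $T$ into the regular module. No genuinely new ingredient beyond Theorem \ref{severalpropertiestilting}(e) is needed.
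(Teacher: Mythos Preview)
Your proposal is correct and follows essentially the same approach as the paper's own proof: both argue that $T$ and $Q$ are characteristic tilting modules, invoke Theorem~\ref{severalpropertiestilting}(e) to obtain $\add T=\mathcal{F}(\Stsim)\cap\mathcal{F}(\Cssim)=\add Q$, and then conclude the Morita equivalence via projectivization. The paper's proof is terser, but the underlying argument is identical.
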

	\begin{proof}By assumption, both $Q$ and $T$ are characteristic tilting modules of  $(A, \{\Delta(\lambda)_{\lambda\in \Lambda}\})$.
		By Theorem \ref{severalpropertiestilting}(e), $\add T=\mathcal{F}(\Stsim)\cap \mathcal{F}(\Cssim)=\add Q$. By projectivization, $\End_A(T)^{op}\proj\simeq \End_A(Q)^{op}\proj$. Therefore, $\End_A(T)^{op}$ and $\End_A(Q)^{op}$ are Morita equivalent.
	\end{proof}

	\section{Change of rings and filtrations of $\Hom(\mathcal{F}(\Stsim), \mathcal{F}(\Cssim))$} \label{Change of rings and filtrations of}
	We will see now that costandard modules and partial tilting modules behave well under change of ground rings.
	
	\begin{Prop}\label{costandardsundergroundringchange}
		Let $S$ be a commutative $R$-algebra and a Noetherian ring. 	Let $(A, \{\Delta(\lambda)_{\lambda\in \Lambda}\})$ be a split quasi-hereditary algebra. Then, the following assertions hold.
		\begin{enumerate}[(a)]
			\item $(S\otimes_R A, \{S\otimes_R \Delta(\lambda)_{\lambda\in \Lambda}\})$ has costandard modules $S\otimes_R \Cs(\l)\otimes_S U(\l)$ for some $U(\l)\in Pic(S)$. Moreover, if $S$ is flat over $R$, then the costandard modules can be written in the form $S\otimes_R \Cs(\l)$.
			\item Let $T=\bigoplus_{\l\in \L} T(\l)$ be a characteristic tilting module of $A$. Assume that $S$ is flat over $R$ or that $S$ has a trivial Picard group then $S\otimes_R T(\l)$ is a partial tilting module (it satisfies (\ref{eq121}) and (\ref{eq121b})) for $S\otimes_RA$ and $S\otimes_R T$ is a characteristic tilting module.
		\end{enumerate} 
	\end{Prop}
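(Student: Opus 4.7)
My plan for part (a) is to invoke the stability of the split quasi-hereditary structure under base change (see \citep[Subsection 3.1]{cruz2021cellular}): $(S\otimes_R A, \{S\otimes_R \St(\l)\})$ is split quasi-hereditary, and applying $S\otimes_R -$ to a split heredity chain of $A^{op}$ gives one of $(S\otimes_R A)^{op}$, so its standard modules are $\{S\otimes_R \St_{op}(\l)\}$. Following the construction in the proof of Proposition \ref{existenceofcostandardmodules} applied to $S\otimes_R A$, the costandard module indexed by $\l$ has the form $\Hom_S(V_\l, S)\otimes_S \Hom_S(S\otimes_R \St_{op}(\l), S)$, where $V_\l=\Hom_{S\otimes_R A}(S\otimes_R \St(\l), \Hom_S(S\otimes_R \St_{op}(\l), S))$ is an invertible $S$-module. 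Since $\St_{op}(\l)\in R\proj$, Hom-tensor adjunction yields $\Hom_S(S\otimes_R \St_{op}(\l), S)\simeq S\otimes_R D\St_{op}(\l)$. Combining with the formula $\Cs(\l)=DU_\l\otimes_R D\St_{op}(\l)$ from Proposition \ref{existenceofcostandardmodules} and the identity $(S\otimes_R DU_\l)\otimes_S (S\otimes_R U_\l)\simeq S$, one identifies this costandard module as $(S\otimes_R \Cs(\l))\otimes_S U(\l)$ with $U(\l):=(S\otimes_R U_\l)\otimes_S \Hom_S(V_\l, S)\in Pic(S)$. When $S$ is flat over $R$, Lemma \ref{tensorprojcommutingonHom} together with flat base change for Hom gives $V_\l\simeq S\otimes_R U_\l$, so $U(\l)\simeq S$ and the costandard module is just $S\otimes_R \Cs(\l)$.

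For part (b), the plan is to apply $S\otimes_R -$ to the defining exact sequences (\ref{eq121}) and (\ref{eq121b}) of $T(\l)$. All modules appearing belong to $R\proj$ by Lemma \ref{extvanishesforfiltratedmodules}(d), so the sequences are $(A, R)$-exact and base change preserves their exactness. A filtration of $X(\l)$ by modules of the form $\St(\mu)\otimes_R U_\mu$ (with $\mu<\l$ and $U_\mu\in R\proj$) base changes to a filtration of $S\otimes_R X(\l)$ with factors $(S\otimes_R \St(\mu))\otimes_S(S\otimes_R U_\mu)$ where $S\otimes_R U_\mu\in S\proj$, placing $S\otimes_R X(\l)$ in $\mathcal{F}(\Stsim_{\mu<\l})$ for $S\otimes_R A$. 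The corresponding base-changed filtration of $Y(\l)$ has factors $(S\otimes_R \Cs(\mu))\otimes_S(S\otimes_R U_\mu)$; under the hypothesis that $S$ is flat or $Pic(S)$ is trivial, part (a) identifies these as costandard modules of $S\otimes_R A$ tensored with projective $S$-modules. Hence $S\otimes_R T(\l)$ satisfies (\ref{eq121}) and (\ref{eq121b}) over $S\otimes_R A$, and summing over $\l$ shows that $S\otimes_R T$ is a characteristic tilting module.

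The main obstacle is the bookkeeping of the invertible twist $U(\l)$ in part (a) when $S$ is not flat: one must verify that the candidate $(S\otimes_R \Cs(\l))\otimes_S U(\l)$ truly coincides with the costandard module produced by the construction of Proposition \ref{existenceofcostandardmodules}, which amounts to checking that the canonical isomorphisms $\Hom_S(S\otimes_R M, S)\simeq S\otimes_R DM$ for $M\in R\proj$ and $U_\l\otimes_R DU_\l\simeq R$ interact correctly with the relevant tensor products over $S$. This invertible twist collapses under either hypothesis in part (b); without one of them, the factors of the base-changed $\Cs$-filtration of $S\otimes_R Y(\l)$ would only agree with the costandard modules of $S\otimes_R A$ up to an invertible $S$-module, obstructing $S\otimes_R T(\l)$ from being directly a partial tilting module in the required sense.
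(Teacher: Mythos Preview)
Your proposal is correct and follows essentially the same route as the paper. Both arguments base-change the split quasi-hereditary structure on $A^{op}$, identify $\Hom_S(S\otimes_R M,S)\simeq S\otimes_R DM$ for $M\in R\proj$, and then read off the costandard modules of $S\otimes_R A$ from Proposition~\ref{existenceofcostandardmodules}; the paper invokes the uniqueness clause there to obtain the invertible twist, whereas you re-run the explicit construction and track the twist by hand, but the content is the same. For part~(b) the two arguments are identical.

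Two small remarks. First, in the flat case the paper argues slightly differently: instead of showing $V_\l\simeq S\otimes_R U_\l$ via base change on $\Hom$, it uses flat base change on $\Ext$ to verify condition~(ii) of Proposition~\ref{existenceofcostandardmodules} directly for $S\otimes_R\Cs(\l)$ and then appeals to uniqueness. Your route is equally valid, but note that Lemma~\ref{tensorprojcommutingonHom} is not quite the right reference (it concerns tensoring the target by a projective $R$-module, not scalar extension along $R\to S$); you want the standard flat base change isomorphism $S\otimes_R\Hom_A(M,N)\simeq\Hom_{S\otimes_R A}(S\otimes_R M,S\otimes_R N)$ for $M$ finitely presented. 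Second, with the paper's convention $\St_{op}(\l)=D\Cs(\l)$ one has $U_\l=\Hom_A(\St(\l),\Cs(\l))\simeq R$, so your formula for $U(\l)$ simplifies to $D_SV_\l$; this is consistent with what you wrote but worth noting to avoid confusion about which ``$\St_{op}$'' you mean.
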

	\begin{proof}
		By Proposition \ref{existenceofcostandardmodules}, $(A^{op}, \{D\Cs(\lambda)_{\lambda\in \Lambda}\})$ is a split quasi-hereditary algebra. By \citep[Proposition 4.14]{Rouquier2008}, \mbox{$(S\otimes_R A^{op}, \{S\otimes_R D\Cs(\lambda)_{\lambda\in \Lambda}\})$} is a split quasi-hereditary algebra. Now note that $(S\otimes_R A)^{op}=S\otimes_R A^{op}$, since $S$ is a commutative ring. Moreover,
		\begin{align}
			S\otimes_R D\Cs(\l)\simeq \Hom_{S\otimes_R R}(S\otimes_R \Cs(\l), S\otimes_R R)\simeq \Hom_S(S\otimes_R \Cs(\l), S).
		\end{align}So, $S\otimes_R \Cs(\l)\otimes_S U_\l$, for a fixed $U_\l\in Pic(S)$, is a costandard module of $S\otimes_R A$ by Proposition \ref{existenceofcostandardmodules}. Now assume that $S$ is a flat $R$-algebra. Then, \begin{align}
			\Ext_{S\otimes_R A}^j(S\otimes_R \St(\l), S\otimes_R \Cs(\beta))\simeq S\otimes_R \Ext_A^j(\St(\l), \Cs(\beta))\simeq \begin{cases}
				S\otimes_R R & \text{ if } \l=\beta, \ i=0\\
				0 & \text{ otherwise }
			\end{cases}.
		\end{align}
		By the uniqueness, $S\otimes_R \Cs(\l)$ are costandard modules of $S\otimes_R A$. 
		
		Assume that either $S$ is an $R$-flat or $S$ has trivial Picard group. Then, by $(b)$ the costandard modules of $S\otimes_R A$ are of the form $S\otimes_R \Cs(\l)$. Since the exact sequences given by filtrations are all $(A, R)$-exact,  the functor $S\otimes_R -$ is exact on the exact sequences of Proposition \ref{tiltingconstruction}. 
		Therefore, $S\otimes_R T$ is a characteristic tilting module for $S\otimes_R A$.
	\end{proof}
	
	\begin{Remark}
		We cannot expect that the isomorphism $T(\l)(\mi)\simeq T_{(\mi)}(\l)$ holds in this generality, where $T_{(\mi)}(\l)$ is a partial tilting indecomposable module of $A(\mi)$ for $\mi$ a maximal ideal of $R$. For example, the rank of $(U_i)$ at each localization $\mi$ ($\mi$ a maximal ideal of $R$) might not be constant for some $i$.
	\end{Remark}

An analogue of the following result in a slightly different setup can be found in \citep[Proposition 2.11]{zbMATH06751586}.
	
	\begin{Prop}\label{tensorproductstandcostand}
		Let $(A, \{\Delta(\lambda)_{\lambda\in \Lambda}\})$ be a split quasi-hereditary algebra. Suppose that $M\in \mathcal{F}(\Stsim)$ and $N\in \mathcal{F}(\Cssim)$. 
		Then, the following assertions hold.
		\begin{enumerate}[(a)]
			\item The functor $-\otimes_A M\colon \mathcal{F}(D\Cssim)\rightarrow R\proj$ is a well-defined exact functor.
			\item The functor $DN\otimes_A -\colon \mathcal{F}(\Stsim)\rightarrow R\proj$ is a well-defined exact functor.
		\end{enumerate}
	\end{Prop}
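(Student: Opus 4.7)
The plan is to prove (b) first and then obtain (a) by applying (b) to the split quasi-hereditary algebra $A^{op}$: by Proposition \ref{existenceofcostandardmodules} and Corollary \ref{dualofstandardiscostandard}, $\{D\Cs(\l)\}$ and $\{D\St(\l)\}$ are respectively standard and costandard modules of $A^{op}$, and the canonical identification $X\otimes_A M\simeq M\otimes_{A^{op}}X$ together with $M\simeq DDM$ transforms (a) into (b) for $A^{op}$ with input variable $X$ and fixed parameter $DM$.

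For (b), I would first establish the auxiliary statement that $\Hom_A(M,N)\in R\proj$ for all $M\in\mathcal{F}(\Stsim)$ and $N\in\mathcal{F}(\Cssim)$. The argument is a double induction on filtration lengths. The inductive step uses the $\Ext$-vanishing from Lemma \ref{extvanishesforfiltratedmodules} to produce short exact sequences of Hom spaces, combined with the observation that $R\proj$ is closed under extensions (since projectives split any surjection onto them). The base case, $M = \St(\l)\otimes_R U$ and $N = \Cs(\mu)\otimes_R V$ with $U,V\in R\proj$, reduces by tensor-Hom adjunction and Lemma \ref{tensorprojcommutingonHom} to $\Hom_A(\St(\l),\Cs(\mu))$, which is either $0$ or $R$ by Proposition \ref{existenceofcostandardmodules}.

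I would then construct a natural isomorphism $\Phi_M\colon DN\otimes_A M\xrightarrow{\simeq} D\Hom_A(M,N)$ given by $\varphi\otimes m\mapsto(f\mapsto \varphi(f(m)))$, which in combination with the previous step implies $DN\otimes_A M\in R\proj$. Because $\mathcal{F}(\Stsim)$ is a resolving subcategory of $A\m\cap R\proj$ (Theorem \ref{filtrationsintermsofext} and Lemma \ref{extvanishesforfiltratedmodules}(d)), every $M\in\mathcal{F}(\Stsim)$ fits into a two-term projective presentation $P_1\to P_0\to M\to 0$ with $P_i\in A\proj$ and $\ker(P_0\to M)\in\mathcal{F}(\Stsim)$. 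The functor $DN\otimes_A-$ is right exact, and $D\Hom_A(-,N)$ is right exact on $\mathcal{F}(\Stsim)$ because, by the previous step together with Lemma \ref{extvanishesforfiltratedmodules}(c), $\Hom_A(-,N)$ sends short exact sequences in $\mathcal{F}(\Stsim)$ to short exact sequences of $R$-projective modules, on which $D$ is exact. Since $\Phi_A$ is the obvious isomorphism $DN\simeq D\Hom_A(A,N)$, the map $\Phi_{P_i}$ is an isomorphism for every $P_i\in A\proj$ by additivity, and a five-lemma comparison on the presentation forces $\Phi_M$ to be an isomorphism.

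Exactness of $DN\otimes_A-$ on $\mathcal{F}(\Stsim)$ then follows immediately from the exactness of $\Hom_A(-,N)$ on $\mathcal{F}(\Stsim)$, the exactness of $D$ on sequences of $R$-projectives, and the naturality of $\Phi$. The main obstacle I anticipate is precisely the construction of $\Phi$: one cannot bypass the five-lemma argument by a direct application of tensor-Hom adjunction and double-dualising, since doing so would require knowing a priori that $DN\otimes_A M\in R\proj$, which is part of what is being proved. The reason I resolve by $A$-projective modules (rather than by partial tilting modules from Theorem \ref{severalpropertiestilting}) is exactly that this makes the identification $\Phi_{P_i}$ on the resolution terms tautological, so that $\Phi_M$ is forced by the right-exactness of both functors.
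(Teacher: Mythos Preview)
Your proof is correct and takes a genuinely different route from the paper. The paper argues via the K\"unneth spectral sequence
\[
E^2_{p,q}=\Tor_p^R\bigl(\Tor_q^A(DN,M),R(\mi)\bigr)\;\Rightarrow\;\Tor_{p+q}^{A(\mi)}\bigl(DN(\mi),M(\mi)\bigr),
\]
reducing to residue fields: over each $A(\mi)$ the classical $\Ext$-vanishing gives $\Tor_{i>0}^{A(\mi)}(DN(\mi),M(\mi))=0$, and a careful induction on the spectral sequence pages then forces $DN\otimes_A M\in R\proj$ and $\Tor_{i>0}^A(DN,M)=0$. Corollary~\ref{homofstandardcostandardisproj} is derived afterwards as a consequence.

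You invert this logical order: you first prove $\Hom_A(M,N)\in R\proj$ directly by induction on filtration lengths (essentially a stripped-down version of what the paper later does in Proposition~\ref{filtrationofhom}), and then establish the natural isomorphism $DN\otimes_A M\simeq D\Hom_A(M,N)$ by a five-lemma comparison on a projective presentation in $\mathcal{F}(\Stsim)$. Your argument is more elementary---no spectral sequences, no passage to residue fields---and entirely self-contained within the integral setup. The paper's approach, by contrast, buys a reusable template: the same spectral-sequence computation is recycled verbatim in the proof of Proposition~\ref{standardscotiltingsreductiontofields}, and it yields the $\Tor$-vanishing $\Tor_{i>0}^A(DN,M)=0$ explicitly (which your argument also gives, since a projective $A$-resolution of $M$ lies in $\mathcal{F}(\Stsim)$, but you do not state it). Your reduction of (a) to (b) via $A^{op}$ is cleaner than the paper's symmetric treatment.
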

	\begin{proof}
		It is enough to show that $DN\otimes_A M\in R\proj$ and $\Tor_{i>0}^A(DN, M)=0$.  Let $\mi$ be a  maximal ideal of $R$.  Let $M^{\bullet}$ be a deleted projective (left) $A$-resolution of $M$. Since $M\in R\proj$, $M^{\bullet}(\mi)=R(\mi)\otimes_R M^{\bullet}$ is a deleted projective $A(\mi)$-resolution of $M(\mi)$ for every maximal ideal $\mi$ in $R$.  Further, each module in the complex $DN\otimes_A M^{\bullet}$ belongs to $\add_R DN$. So, the complex $DN\otimes_A M^{\bullet}$ is a flat chain complex. Consider the K\"unneth spectral sequence for chain complexes (see for example \citep[Theorem 5.6.4]{Weibel2003})
		\begin{align*}
			E^2_{p, q}= \Tor_p^R(\Tor_q^A(DN, M), R(\mi))\implies H_{p+q}(DN\otimes_A M^{\bullet}\otimes_R R(\mi))=\Tor_{p+q}^{A(\mi)}(DN(\mi), M(\mi)).
		\end{align*}
		Observe that \begin{align}
			\Tor_{i>0}^{A(\mi)}(DN(\mi), M(\mi))&=\Tor_{i>0}^{A(\mi)}(D_{(\mi)}N(\mi), M(\mi))=H_{i>0}(D_{(\mi)} N(\mi)\otimes_{A(\mi)}M^{\bullet}(\mi)  )\\
			&\simeq H_{i>0}(D_{(\mi)}\Hom_{A(\mi)}(M^{\bullet}(\mi), N(\mi)))\simeq D_{(\mi)}H^{i>0}(\Hom_{A(\mi)}(M^{\bullet}(\mi), N(\mi))) \nonumber\\
			&\simeq D_{(\mi)} \Ext_{A(\mi)}^{i>0}(M(\mi), N(\mi))=0.
		\end{align}The last equality follows from Proposition \ref{standardscotiltingsreductiontofields} and Lemma \ref{extvanishesforfiltratedmodules}.
		
		By \citep[Lemma A.3]{CRUZ2022410}, for each maximal ideal $\mi$ in $R$, we obtain that \begin{align}
			0=E_{1, 0}^2=\Tor_1^R(DN\otimes_A M, R(\mi)).
		\end{align}Therefore, $DN\otimes_A M\in R\proj$. Moreover, $E^2_{i, 0}=0$ for all $i>0$. Again, by \citep[Lemma A.3]{CRUZ2022410}, it follows that \begin{align}
			\Tor_1^A(DN, M)(\mi) = E^2_{0, 1}\simeq E^2_{2, 0}=0.
		\end{align}Thus, $\Tor_1^A(DN, M)=0$ and consequently $E^2_{i, 1}=0$ for all $i\geq 0$. We can proceed by induction on $q$ to show that $E^2_{i, j}=0$ for all $i\geq 0$, $1\leq j\leq q$. In fact, assume that $E^2_{i, j}=0$ for all $i\geq 0$, $1\leq j\leq q$ for a given $q$. By \citep[Lemma A.4]{CRUZ2022410}, there exists an exact sequence
		\begin{align}
			0=E^2_{q+2, 0}\rightarrow E^2_{0, q+1}\rightarrow H_{q+1}=0.
		\end{align}So, $\Tor_{q+1}(DN, M)(\mi)=0$. Hence, $\Tor_{q+1}(DN, M)=0$. Therefore, $E^2_{i, q+1}=0$ for all $i\geq 0$. We showed that $E^2_{i, j}=0$ for all $i\geq 0$ and $j\geq 1$. This means that $\Tor_{q>0}^A(DN, M)=0$.
	\end{proof}
	
	As a direct consequence of Proposition \ref{tensorproductstandcostand}.
	\begin{Cor}\label{homofstandardcostandardisproj} Let $(A, \{\Delta(\lambda)_{\lambda\in \Lambda}\})$ be a split quasi-hereditary algebra.
		Let $M\in \mathcal{F}(\Stsim)$ and let ${N\in \mathcal{F}(\Cssim)}$. Then, $\Hom_A(M, N)\in R\proj$.
	\end{Cor}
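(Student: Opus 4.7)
The plan is to reduce the corollary to Proposition \ref{tensorproductstandcostand}(b) via Tensor-Hom adjunction. The key observation is that $N \in \mathcal{F}(\Cssim)$ is projective over $R$ by Lemma \ref{extvanishesforfiltratedmodules}(d), so the natural evaluation map gives $DDN \simeq N$ as $A$-modules. This will let me rewrite $\Hom_A(M, N)$ in terms of a tensor product to which Proposition \ref{tensorproductstandcostand} applies directly.

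First I would use the Tensor-Hom adjunction
\begin{equation*}
\Hom_R(DN \otimes_A M, R) \simeq \Hom_A(M, \Hom_R(DN, R)) = \Hom_A(M, DDN),
\end{equation*}
which is valid because $DN$ carries an $(A^{op}, R)$-bimodule structure and $M$ is a left $A$-module. Combining with the isomorphism $DDN \simeq N$ gives $\Hom_A(M, N) \simeq D(DN \otimes_A M)$.

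By Proposition \ref{tensorproductstandcostand}(b), the functor $DN \otimes_A -$ sends $\mathcal{F}(\Stsim)$ into $R\proj$, so $DN \otimes_A M \in R\proj$. Since $D = \Hom_R(-,R)$ preserves finitely generated projective $R$-modules (the dual of $R^n$ is $R^n$, and $D$ is additive), it follows that $\Hom_A(M, N) \simeq D(DN \otimes_A M) \in R\proj$.

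The main obstacle is really just a bookkeeping check that the adjunction isomorphism above is $R$-linear and respects the relevant structures; no new ideas are needed beyond Proposition \ref{tensorproductstandcostand}, and the argument is a one-line consequence once the adjunction is written down. This is why the corollary is stated as an immediate consequence in the paper.
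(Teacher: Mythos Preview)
Your proof is correct and is precisely the argument the paper intends: the paper states the corollary as ``a direct consequence of Proposition \ref{tensorproductstandcostand}'' without further detail, and your use of Tensor-Hom adjunction together with $N\simeq DDN$ (since $N\in R\proj$ by Lemma \ref{extvanishesforfiltratedmodules}(d)) to obtain $\Hom_A(M,N)\simeq D(DN\otimes_A M)$ is exactly how one unpacks that consequence. There is nothing to add.
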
We can say even more about $\Hom_A(M, N)$.
	In fact, applying the same idea used in \citep{Koenig2001} to construct a filtration to $\End_A(T)$, for $T$ we can construct a filtration for $\Hom_A(M, N)$.
	\begin{Prop}\label{filtrationofhom}
		Let $(A, \{\Delta(\lambda)_{\lambda\in \Lambda}\})$ be a split quasi-hereditary algebra. Let $M\in \mathcal{F}(\Stsim),$  $L\in\mathcal{F}(\Cssim)$. Let $\L\rightarrow \{1, \ldots, t\}$, $\l\mapsto i_\l$ be an increasing bijection and set $\St_{i_\l}:=\St(\l)$. So, there exists $U_i, S_i\in R\proj$ such that\begin{align*}
			&0=M_{n+1}\subset M_n\subset \cdots \subset M_1=M \text{ with } M_i/M_{i+1}\simeq \St_i\otimes_R U_i\\
			&0=L_{n+1}\subset L_n\subset \cdots \subset L_1=L \text{ with } L_i/L_{i+1}\simeq \Cs_{n-i+1}\otimes_R S_{n-i+1}, \ i=1, \ldots, n.
		\end{align*} Then, $\Hom_A(M, L)$ has a filtration \begin{equation*}
			\begin{gathered}
				0=X_{n+1}\subset X_n\subset X_{n-1}\subset \cdots\subset X_1=X=\Hom_A(M, N), \\ X_i=\Hom_A(M/M_{n-i+2}, L_i)=\Hom_{A/J_{n-i+2}}(M/M_{n-i+2}, L_i), \quad X_i/X_{i+1}\simeq \Hom_R(U_{n-i+1}, S_{n-i+1}).
			\end{gathered}	
		\end{equation*}
	\end{Prop}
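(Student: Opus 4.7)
The plan is to construct the $X_i$ as described and verify the filtration by iterated applications of $\Hom_A(-,-)$ to the two short exact sequences extracted from the given filtrations of $M$ and $L$, namely
\begin{align*}
0\to M_{n-i+1}/M_{n-i+2}\to M/M_{n-i+2}\to M/M_{n-i+1}\to 0,\\
0\to L_{i+1}\to L_i\to \Cs_{n-i+1}\otimes_R S_{n-i+1}\to 0.
\end{align*}
The essential ingredients are Lemma \ref{extvanishesforfiltratedmodules}(c), which supplies the necessary $\Ext_A^1$-vanishings between modules in $\mathcal{F}(\Stsim)$ and $\mathcal{F}(\Cssim)$, together with Proposition \ref{existenceofcostandardmodules}(ii) (giving $\Hom_A(\St(\l),\Cs(\beta))=0$ for $\l\neq\beta$ and $\simeq R$ otherwise) and Lemma \ref{tensorprojcommutingonHom} to handle the $\otimes_R$-factors.

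First I would define the embedding $X_{i+1}\hookrightarrow X_i$ as the map sending $g\colon M/M_{n-i+1}\to L_{i+1}$ to the composition $M/M_{n-i+2}\twoheadrightarrow M/M_{n-i+1}\xrightarrow{g} L_{i+1}\hookrightarrow L_i$; injectivity is immediate from surjectivity of the projection and injectivity of the inclusion. To rewrite this inclusion via the second sequence above, apply $\Hom_A(M/M_{n-i+1},-)$ to it: since $M/M_{n-i+1}$ is filtered by $\St_k\otimes_R U_k$ with $k<n-i+1$, a short induction on the filtration length combining Proposition \ref{existenceofcostandardmodules}(ii) with Lemma \ref{tensorprojcommutingonHom} gives $\Hom_A(M/M_{n-i+1},\Cs_{n-i+1}\otimes_R S_{n-i+1})=0$, and Lemma \ref{extvanishesforfiltratedmodules}(c) supplies $\Ext_A^1(M/M_{n-i+1},L_{i+1})=0$; together these yield $X_{i+1}\simeq\Hom_A(M/M_{n-i+1},L_i)$.

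Next, applying $\Hom_A(-,L_i)$ to the first sequence and using $\Ext_A^1(M/M_{n-i+1},L_i)=0$ from Lemma \ref{extvanishesforfiltratedmodules}(c) produces the short exact sequence
\[
0\to X_{i+1}\to X_i\to \Hom_A(\St_{n-i+1}\otimes_R U_{n-i+1},L_i)\to 0.
\]
To identify the rightmost term, I would apply $\Hom_A(\St_{n-i+1}\otimes_R U_{n-i+1},-)$ to the second sequence. Since $L_{i+1}$ is filtered by $\Cs_k\otimes_R S_k$ with $k<n-i+1$, Proposition \ref{existenceofcostandardmodules}(ii) forces $\Hom_A(\St_{n-i+1}\otimes_R U_{n-i+1},L_{i+1})=0$, and Lemma \ref{extvanishesforfiltratedmodules}(c) kills the connecting $\Ext_A^1$; hence $\Hom_A(\St_{n-i+1}\otimes_R U_{n-i+1},L_i)\simeq\Hom_A(\St_{n-i+1}\otimes_R U_{n-i+1},\Cs_{n-i+1}\otimes_R S_{n-i+1})$. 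Applying Tensor--Hom adjunction with $U_{n-i+1}\in R\proj$, Lemma \ref{tensorprojcommutingonHom}, and $\Hom_A(\St_{n-i+1},\Cs_{n-i+1})\simeq R$ reduces this to $\Hom_R(U_{n-i+1},S_{n-i+1})$, as claimed.

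Finally, the identification $\Hom_A(M/M_{n-i+2},L_i)=\Hom_{A/J_{n-i+2}}(M/M_{n-i+2},L_i)$ follows from the fact that both $M/M_{n-i+2}$ and $L_i$ are annihilated by $J_{n-i+2}$, since their filtration factors $\St_k\otimes_R U_k$ and $\Cs_k\otimes_R S_k$ for $k\le n-i+1$ are standards and costandards of the split quasi-hereditary quotient $A/J_{n-i+2}$. The only delicate aspect is the bookkeeping of indices across the two filtrations so that the four successive Hom/Ext applications line up correctly; no individual step presents a genuine obstacle.
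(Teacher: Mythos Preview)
Your argument is correct and uses the same ingredients as the paper's proof: the $\Ext^1$-vanishing of Lemma~\ref{extvanishesforfiltratedmodules}(c), the orthogonality $\Hom_A(\St_j,\Cs_k)=0$ for $j\neq k$ from Proposition~\ref{existenceofcostandardmodules}(ii), and Lemma~\ref{tensorprojcommutingonHom} to pull the $R$-projective factors through. The only organisational difference is that the paper proceeds by induction on $|\Lambda|$, peeling off the top layer $\St_n\otimes_R U_n$ and $\Cs_n\otimes_R S_n$ once and then invoking the induction hypothesis on the split quasi-hereditary quotient $A/J_n$ to handle $\Hom_{A/J_n}(M/M_n,L_2)$, whereas you unroll this induction and verify the quotient $X_i/X_{i+1}$ directly at each level. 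Your version is slightly more self-contained (it does not rely on the quotient $A/J_n$ being split quasi-hereditary until the very last identification), while the paper's version makes the recursive structure more visible; neither gains anything substantial over the other.
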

	\begin{proof}
		We will proceed by induction on $n=|\L|$. Assume $n=1$. Then, $M\simeq \St_1\otimes_R U_1$ and $L\simeq \Cs_1\otimes_R S_1$. Then, \begin{align}
			\Hom_A(M, N)&=\Hom_A(\St_1\otimes_R U_1, \Cs_1\otimes_R S_1)\simeq \Hom_R(U_1, \Hom_A(\St_1, \Cs_1\otimes_R S_1))\\&\simeq \Hom_R(U_1, \Hom_A(\St_1, \Cs_1)\otimes_R S_1)\simeq \Hom_R(U_1, S_1).
		\end{align} So, the filtration $0\subset \Hom_R(U_1, S_1)=X_1$ is the desired one. Assume the result holds for $n-1$. Consider the short exact sequences
		\begin{align}
			0\rightarrow \St_n\otimes_R U_n\xrightarrow{k_M} M\xrightarrow{\pi_M}M/M_n\rightarrow 0\label{eq152}\\
			0\rightarrow L_2\xrightarrow{k_L}L\xrightarrow{\pi_L}\Cs_n\otimes_R S_n\rightarrow 0.\label{eq153}
		\end{align}
		Applying the functor $\Hom_A(M, -)$ to (\ref{eq153}) gives 
		\begin{align*}
			0\rightarrow \Hom_A(M, L_2)\xrightarrow{\Hom_A(M, k_L)} \Hom_A(M, L)\xrightarrow{\Hom_A(M, \pi_L)} \Hom_A(M, \Cs_n\otimes_R S_n)\rightarrow \Ext_A^1(M, L_2)=0. 
		\end{align*}Applying $\Hom_A(-, L)$ to (\ref{eq152}) gives
		\begin{align*}
			\Hom_A(M/M_n, L)\xhookrightarrow{\Hom_A(\pi_M, L)} \Hom_A(M, L)\xrightarrow{\Hom_A(k_M, L)} \Hom_A(\St_n\otimes_R U_n, L) \rightarrow \Ext_A^1(M/M_n, L)=0.
		\end{align*}
		Applying the functor $\Hom_A(-, L_2)$ to (\ref{eq152}) we get the following exact sequence
		\begin{equation*}
			\begin{tikzcd}
				0 \arrow[r]   &[-1.5em] \Hom_A(M/M_n, L_2) \arrow[r, "\Hom_A(\pi_M{,} L_2)", outer sep=0.75ex] & \Hom_A(M, L_2) \arrow[r, "\Hom_A(k_M{,} L_2)", outer sep=0.75ex] & \Hom_A(\St_n\otimes_R U_n, L_2)
				\arrow[r]   &[-1em] 0=\Ext_A^1(M/M_n, L_2). 
			\end{tikzcd}\label{eq156}
		\end{equation*}
		Since $L_2\in \mathcal{F}(\Cs_{i<n})$ we obtain $\Hom_A(\St_n\otimes_R U_n, L_2)=0$. Hence, $\Hom_A(\pi_M, L_2)$ is an isomorphism. Applying the functor $\Hom_A(-, \Cs_n\otimes_R S_n)$ to (\ref{eq152}) yields the exact sequence
		\begin{equation*}
			\begin{tikzcd}
				& \Hom_A(M/M_n, \St_n\otimes_R S_n) \arrow[r, hookrightarrow] & \Hom_A(M, \St_n\otimes_R U_n) \ar[r, "\Hom_A(k_M{,} \St_n\otimes_R U_n)", outer sep=0.75ex, twoheadrightarrow] & \Hom_A(\St_n\otimes_R U_n, \Cs_n\otimes_R S_n)  
			\end{tikzcd}.
		\end{equation*}
		Since $M/M_n\in \mathcal{F}(\Stsim_{i<n})$ we obtain $\Hom_A(M/M_n, \Cs_n\otimes_R S_n)=0$. Hence, $\Hom_A(k_M, \St_n\otimes_R U_n)$ is an isomorphism.
		Therefore, we have an exact sequence
		\begin{equation*}
			\begin{tikzcd}[sep=7em]
				\Hom_A(M/M_n, L_2) \arrow[ hookrightarrow, r, "\Hom_A(M{,} k_L)\circ \Hom_A(\pi_M{,} L_2)", outer sep=0.75ex] & \Hom_A(M, L) 
				\arrow[r, "\Hom_A(k_M{,} \St_n\otimes_R U_n)\circ \Hom_A(M{,} \pi_L)", twoheadrightarrow, outer sep=0.75ex]  
				& \Hom_A(\St_n\otimes_R U_n, \Cs_n\otimes_R S_n).  
			\end{tikzcd}
		\end{equation*}
		Furthermore, \begin{align}
			\Hom_A(\St_n\otimes_R U_n, \Cs_n\otimes_R S_n)&\simeq \Hom_R(U_n, \Hom_A(\St_n, \Cs_n\otimes_R S_n))\simeq \Hom_R(U_n, \Hom_A(\St_n, \Cs_n)\otimes_R S_n)\nonumber \\&\simeq \Hom_R(U_n, S_n).
		\end{align} Fix $J_n=\im \tau_{\St_n}$. Because of $M/M_n\in \mathcal{F}(\Stsim_{i<n})$ and $L_2\in \mathcal{F}(\Cssim_{i<n})$, we have $\Hom_A(M/M_n, L_2)=\Hom_{A/J_n}(M/M_n, L_2)$. Therefore, $X/\Hom_{A/J_n}(M/M_n, L_2)\simeq \Hom_R(U_n, S_n)$. By induction, \linebreak$\Hom_{A/J_n}(M/M_n, L_2)$ admits a filtration
		$
		0\subset X_n\subset X_{n-1}\subset \cdots \subset X_2=\Hom_A(M/M_n, L_2),
		$ with $X_i\simeq \Hom_{A/J_n/J_{n-i+2}/J_n}(M/M_{n-i+2}, L_i)\simeq \Hom_{A/J_{n-i+2}}(M/M_{n-i+2}, L_i), \ i=2, \ldots n$. Thus,  
		$
		0\subset X_n\subset X_{n-1}\subset \cdots\subset X_2\subset X 
		$ is the desired filtration.
	\end{proof}
	
	The following result has been observed in the literature several times in particular cases (see for example Lemma 4.2 of \citep{zbMATH01294605}). 
	\begin{Cor}\label{hommaximalforfiltrations}
		Let $(A, \{\Delta(\lambda)_{\lambda\in \Lambda}\})$ be a split quasi-hereditary algebra.	Let $M\in \mathcal{F}(\Stsim)$ and let $N\in \mathcal{F}(\Cssim)$. Let $Q$ be a commutative $R$-algebra and commutative Noetherian ring. Then, $$Q\otimes_R \Hom_A(M, N)\simeq \Hom_{Q\otimes_RA}(Q\otimes_R M, Q\otimes_R N).$$
	\end{Cor}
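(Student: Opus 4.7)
My plan is to convert the $\Hom$ on both sides into a dualized tensor product, which has transparent base change behavior, by exploiting the $R$-projectivity guaranteed by Proposition \ref{tensorproductstandcostand}. The starting observation is tensor-hom adjunction combined with $N\in R\proj$ (so that $DDN\simeq N$): one has a natural isomorphism
\[
\Hom_A(M,N)\;\simeq\;\Hom_A(M,\Hom_R(DN,R))\;\simeq\;\Hom_R(DN\otimes_A M,R)\;=\;D(DN\otimes_A M).
\]
Since $DN\in \mathcal{F}(D\Cssim)$ (its filtration factors are $D(\Cs(\l)\otimes_R V_\l)\simeq D\Cs(\l)\otimes_R DV_\l$, with $V_\l\in R\proj$) and $M\in\mathcal{F}(\Stsim)$, Proposition \ref{tensorproductstandcostand}(a) yields $DN\otimes_A M\in R\proj$.

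I would next apply $Q\otimes_R-$ to both sides. Because $DN\otimes_A M$ is finitely generated projective over $R$, the standard identification $Q\otimes_R\Hom_R(X,R)\simeq \Hom_R(X,Q)\simeq \Hom_Q(Q\otimes_R X,Q)$ (valid for any finitely generated projective $X$) applies, giving
\[
Q\otimes_R\Hom_A(M,N)\;\simeq\;Q\otimes_R D(DN\otimes_A M)\;\simeq\;\Hom_Q\bigl(Q\otimes_R(DN\otimes_A M),\,Q\bigr).
\]
The inner tensor is then rewritten using the associativity $Q\otimes_R(-\otimes_A M)\simeq (Q\otimes_R-)\otimes_A M$, the analogous identification $Q\otimes_R DN\simeq \Hom_Q(Q\otimes_R N,Q)$ (valid since $N\in R\proj$), and the change-of-rings iso $(-)\otimes_A M\simeq (-)\otimes_{Q\otimes_R A}(Q\otimes_R M)$, producing
\[
Q\otimes_R(DN\otimes_A M)\;\simeq\;\Hom_Q(Q\otimes_R N,Q)\otimes_{Q\otimes_R A}(Q\otimes_R M).
\]

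Finally, I would run the $\Hom\simeq D(D(-)\otimes-)$ identity in reverse over $Q\otimes_R A$: since $Q\otimes_R N\in Q\proj$, tensor-hom adjunction gives
\[
\Hom_Q\!\Bigl(\Hom_Q(Q\otimes_R N,Q)\otimes_{Q\otimes_R A}(Q\otimes_R M),\,Q\Bigr)\;\simeq\;\Hom_{Q\otimes_R A}(Q\otimes_R M,\,Q\otimes_R N).
\]
Chaining the isomorphisms delivers the claim. The crucial technical input is Proposition \ref{tensorproductstandcostand}, without which the duality $D$ would not commute with $Q\otimes_R-$ in the way required. A more direct filtration-by-filtration approach via Proposition \ref{filtrationofhom} would instead have to contend with the Picard twists distinguishing $Q\otimes_R\Cs(\l)$ from the costandards of $Q\otimes_R A$, a complication this argument bypasses.
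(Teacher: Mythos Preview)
Your argument is correct, and it is precisely the route the paper itself signals in its opening sentence (``It follows immediately from Proposition~\ref{tensorproductstandcostand}''): rewrite $\Hom_A(M,N)$ as $D(DN\otimes_A M)$, invoke the $R$-projectivity of $DN\otimes_A M$ from Proposition~\ref{tensorproductstandcostand} to make the dual commute with $Q\otimes_R-$, then unwind over $Q\otimes_R A$. You have simply written out what the paper leaves implicit.

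Where the two diverge is that the paper then supplies a second, filtration-based proof by induction on $|\Lambda|$ via Proposition~\ref{filtrationofhom}, comparing the top layer of the Hom-filtration over $A$ with its base-changed counterpart and reducing to the quotient $A/J$. Your approach is cleaner and more conceptual; the filtration argument is more hands-on but, as the paper notes, ``better equipped for some relevant cases'' where one wants to track the filtration explicitly. One small correction to your closing remark: the filtration argument does not actually need to contend with Picard twists. It works directly with the modules $Q\otimes_R\Cs(\lambda)$ as filtration factors and never needs to identify them as the abstract costandards of $Q\otimes_R A$; indeed, the paper uses the present Corollary afterwards (Proposition~\ref{costandardsundergroundringchangetwo}) to show those twists are trivial, so your worry would have been circular had it been genuine.
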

	\begin{proof}It follows immediately from Proposition \ref{tensorproductstandcostand}. However, the following argument using Proposition \ref{filtrationofhom} is better equipped for some relevant cases.
		We shall proceed by induction on $n=|\L|$. Assume $n=1$. Then, $\Hom_A(M, N)\simeq \Hom_R(U_1, S_1)$. So, \begin{align}
			Q\otimes_R\Hom_A(M, N)&\simeq 	Q\otimes_R \Hom_R(U_1, S_1)\simeq \Hom_{Q\otimes_R R}(Q\otimes_RU_1, Q\otimes_R S_1)\\&\simeq \Hom_{Q\otimes_RA}(Q\otimes_R\St_1\otimes_{Q\otimes_R R}Q\otimes_RU_1, Q\otimes_R\Cs_1\otimes_{Q\otimes_R R}Q\otimes_R S_1)\\&\simeq \Hom_{Q\otimes_RA}(Q\otimes_RM, Q\otimes_RN).
		\end{align} Assume that the result holds for $n-1$.
		Consider $A$ with $|\L|=n$. Consider the exact sequence given by the filtration of $\Hom_A(M, N)$:
		\begin{align}
			0\rightarrow \Hom_{A/J}(M/M_n, L_2)\rightarrow \Hom_A(M, L)\rightarrow \Hom_R(U_n, S_n )\rightarrow 0.\label{eq165}
		\end{align}Since $\Hom_R(U_n, S_n)\in R\proj$, (\ref{eq165}) is $(A, R)$-exact. We will denote by $X(Q)$ the tensor product $Q\otimes_R X$. Applying $Q\otimes_R -$ we get the following commutative diagram with exact rows
		\begin{center}
			\hspace*{-1em}	\begin{tikzpicture}[baseline= (a).base]
				\node[scale=.85] (a) at (0,0){	\begin{tikzcd}
						Q\otimes_R\Hom_{A/J}(M/M_n, L_2)\arrow[r, hookrightarrow]\arrow[d, "\alpha_1"]&[-0.75em] Q\otimes_R\Hom_A(M, L)\arrow[r, twoheadrightarrow]\arrow[d, "\alpha"]&[-0.75em]Q\otimes_R \Hom_A(\St_n\otimes_R U_n, \Cs_n\otimes_R S_n)\arrow[d, "\alpha_2"] \\
						\Hom_{ A/J(Q)}( M/M_n(Q),  L_2(Q))\arrow[r, hookrightarrow] &[-0.75em]\Hom_{A(Q)}(M(Q), L_2(Q))\arrow[r, twoheadrightarrow]&[-0.75em] \Hom_{A(Q)}(\St_n(Q)\otimes_{Q} U_n(Q), \Cs_n(Q)\otimes_Q S_n(Q))
					\end{tikzcd}.};
			\end{tikzpicture}
		\end{center}
		Note that the bottom row is exact since we use the same exact sequences given by filtrations of \linebreak\mbox{$M(Q)\in \mathcal{F}(\St(Q))$} and $L(Q)\in \mathcal{F}(\Cs(Q))$ in view of Proposition 4.14 of \citep{Rouquier2008} (see also \citep[Proposition 3.1.1]{cruz2021cellular}). This is admissible because all the modules involved in the filtrations are projective over $R$. So, the functor $Q\otimes_R -$ preserves the given filtrations.
		By induction, $\alpha_1$ is an isomorphism. Since \mbox{$\St_n\otimes_R U_n\in A\proj$,} $\alpha_2$ is an isomorphism. By Snake Lemma, $\alpha$ is an isomorphism.
	\end{proof}

	\begin{Prop}\label{standardscotiltingsreductiontofields}
		\citep[Proposition 4.30]{Rouquier2008} Let $(A, \{\Delta(\lambda)_{\lambda\in \Lambda}\})$ be a split quasi-hereditary algebra. Let \mbox{$M\in A\m$.} Then, the following assertions hold.\begin{enumerate}[(a)]
			\item $M\in \mathcal{F}(\Stsim)$ if and only if $M(\mi)\in \mathcal{F}(\St(\mi))$ for all maximal ideals $\mi$ of $R$ and $M\in R\proj$.
			\item $M\in \mathcal{F}(\Cssim)$ if and only if $M(\mi)\in \mathcal{F}(\Cs(\mi))$ for all maximal ideals $\mi$ of $R$ and $M\in R\proj$.
			\item Let $T$ be a characteristic tilting module. $M\in \add T$ if and only if $M(\mi)\in \add T(\mi)$ for all maximal ideals $\mi$ of $R$ and $M\in R\proj$.
		\end{enumerate}  
	\end{Prop}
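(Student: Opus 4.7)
My plan is to prove the forward implications of (a), (b), (c) directly by base changing filtrations, and to establish the converses by reducing to the Ext-vanishing criterion of Theorem \ref{filtrationsintermsofext} and combining Nakayama's lemma with a universal coefficient argument.

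For the forward direction of (a), any filtration of $M\in \mathcal{F}(\Stsim)$ by modules of the form $\St(\l)\otimes_R U_\l$ with $U_\l\in R\proj$ is automatically $(A,R)$-exact because every factor is $R$-projective; applying $R(\mi)\otimes_R -$ preserves the filtration and yields one with factors $\St(\l)(\mi)\otimes_{R(\mi)} U_\l(\mi)$, which lie in $\mathcal{F}(\St(\mi))$ since $U_\l(\mi)$ is a finite-dimensional $R(\mi)$-vector space. That $M\in R\proj$ is Lemma \ref{extvanishesforfiltratedmodules}(d). The same idea handles the forward direction of (b) and, using Proposition \ref{costandardsundergroundringchange} to see that $T(\mi)$ is a characteristic tilting of $A(\mi)$, also of (c).

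For the converse of (a) I would invoke Theorem \ref{filtrationsintermsofext}: since $M\in R\proj$ by hypothesis, it suffices to show $\Ext_A^1(M, \Cs(\l))=0$ for every $\l\in \L$. Because $A$ is Noetherian over $R$ and $M, \Cs(\l)$ are finitely generated, these Ext modules are finitely generated over $R$, so by Nakayama it is enough to prove $\Ext_A^1(M, \Cs(\l))\otimes_R R(\mi)=0$ for every $\mi\in \MaxSpec R$. Choose a resolution $P^\bullet\twoheadrightarrow M$ by finitely generated projective $A$-modules; the $R$-projectivity of $M$ makes this resolution $R$-split, so $P^\bullet(\mi)\twoheadrightarrow M(\mi)$ is a projective $A(\mi)$-resolution. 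Each $\Hom_A(P^i, \Cs(\l))$ is an $R$-summand of a finite power of $\Cs(\l)$, hence finitely generated projective over $R$, and the canonical map $\Hom_A(P^i, \Cs(\l))\otimes_R R(\mi)\to \Hom_{A(\mi)}(P^i(\mi), \Cs(\l)(\mi))$ is an isomorphism. Thus $\Hom_A(P^\bullet, \Cs(\l))$ is a bounded-below complex of flat $R$-modules whose tensor product with $R(\mi)$ computes $\Ext_{A(\mi)}^\bullet(M(\mi), \Cs(\l)(\mi))$. The universal coefficient short exact sequence then gives
\begin{align*}
0\to \Ext_A^1(M, \Cs(\l))\otimes_R R(\mi)\to \Ext_{A(\mi)}^1(M(\mi), \Cs(\l)(\mi))\to \Tor_1^R(\Ext_A^2(M, \Cs(\l)), R(\mi))\to 0.
\end{align*}
By Proposition \ref{costandardsundergroundringchange}, $\Cs(\l)(\mi)$ is a costandard for $A(\mi)$, and $M(\mi)\in \mathcal{F}(\St(\mi))$ by assumption; Lemma \ref{extvanishesforfiltratedmodules}(b) forces the middle term to vanish, giving the required residue-level vanishing.

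Part (b) I would obtain by the symmetric argument applied to $\Ext_A^1(\St(\l), M)$, or alternatively by dualising via $D$ using Corollary \ref{dualofstandardiscostandard} to reduce to (a) over $A^{op}$. For (c), if $M\in R\proj$ and $M(\mi)\in \add T(\mi)$ for every $\mi$, then $M(\mi)\in \mathcal{F}(\St(\mi))\cap \mathcal{F}(\Cs(\mi))$, so (a) and (b) yield $M\in \mathcal{F}(\Stsim)\cap \mathcal{F}(\Cssim)=\add T$ via Theorem \ref{severalpropertiestilting}(e). The main obstacle is the converse of (a): upgrading pointwise Ext vanishing at residue fields to an integral statement. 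The key technical input is the universal coefficient sequence for a complex of flat $R$-modules, which works here precisely because $M\in R\proj$ guarantees an $R$-split resolution and $\Cs(\l)\in R\proj$ guarantees flatness of the Hom complex; the Noetherian/projective-over-$R$ hypotheses ensure finite generation of the Ext modules so that Nakayama applies.
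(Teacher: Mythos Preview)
Your overall strategy coincides with the paper's: the forward implications are obtained by base-changing the $(A,R)$-exact filtrations, and for the converse of (a) you reduce via Theorem~\ref{filtrationsintermsofext} to proving $\Ext_A^1(M,\Cs(\l))=0$ by comparing with the residue-field computation $\Ext_{A(\mi)}^1(M(\mi),\Cs(\l)(\mi))=0$. Your treatment of (b) by dualising and of (c) via Theorem~\ref{severalpropertiestilting}(e) is exactly what the paper does. The only substantive difference is that the paper works on the Tor side (with $D\Cs(\l)\otimes_A-$) and then dualises back to Ext, whereas you work directly with $\Hom_A(P^\bullet,\Cs(\l))$; these are dual and equally valid.

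There is, however, a genuine gap in your converse argument for (a). The universal-coefficient short exact sequence you invoke,
\[
0\longrightarrow \Ext_A^1(M,\Cs(\l))\otimes_R R(\mi)\longrightarrow \Ext_{A(\mi)}^1(M(\mi),\Cs(\l)(\mi))\longrightarrow \Tor_1^R\bigl(\Ext_A^2(M,\Cs(\l)),R(\mi)\bigr)\longrightarrow 0,
\]
is only available when $R$ is hereditary. Over a general Noetherian ring the correct tool is the K\"unneth spectral sequence $E_2^{p,q}=\Tor_p^R\bigl(H^q(C^\bullet),R(\mi)\bigr)\Rightarrow H^{p+q}(C^\bullet\otimes_R R(\mi))$ for $C^\bullet=\Hom_A(P^\bullet,\Cs(\l))$, and the edge map $E_2^{0,1}\to H^1$ need not be injective: differentials $d_r\colon E_r^{r,2-r}\to E_r^{0,1}$ may have non-trivial image. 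So the vanishing of $\Ext_{A(\mi)}^1(M(\mi),\Cs(\l)(\mi))$ alone does not yet give $\Ext_A^1(M,\Cs(\l))\otimes_R R(\mi)=0$.

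The paper closes this by running the spectral sequence inductively (this is the ``argument developed in the proof of Proposition~\ref{tensorproductstandcostand}''): from $H^n=0$ for $n>0$ one first reads off $E_2^{1,0}=E_\infty^{1,0}=0$, hence $H^0\in R\proj$ and thus $E_2^{p,0}=0$ for all $p>0$; only then does $E_2^{0,1}=E_\infty^{0,1}=0$ follow, yielding $H^1\otimes_R R(\mi)=0$ and $H^1=0$ by Nakayama, and one climbs the $q$-axis. An alternative repair, available here because $A$ has finite global dimension, is to choose a \emph{bounded} projective $A$-resolution of $M$: then $C^\bullet$ is a perfect complex of $R$-modules, and one argues from the top degree downward (Nakayama on the top cohomology, split off the top term, and descend) that acyclicity in positive degrees over every $R(\mi)$ forces acyclicity in positive degrees over $R$.
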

	\begin{proof}
		Assume that $M\in \mathcal{F}(\Stsim)$. The functor $R(\mi)\otimes_R -\colon A\m\rightarrow A(\mi)\m$ is exact on filtrations of $M$ and therefore $M(\mi)\in \mathcal{F}(\St(\mi))$.
		
		Conversely, let $M\in A\m\cap R\proj$ such that $M(\mi)\in \mathcal{F}(\St(\mi))$ for every maximal $\mi$ in $R$. Let $N\in \mathcal{F}(\Cssim)$. In particular, $N(\mi)\in \mathcal{F}(\Cs(\mi))$. The projectivity of $M$ as $R$-module implies that $R(\mi)\otimes_R -$ sends a deleted projective $A$-resolution of $M$ to a deleted projective $A(\mi)$-resolution of $M(\mi)$, $\mi\in \MaxSpec R$. Using the  K\"unneth spectral sequence for chain complexes and the argument developed in the proof of Proposition \ref{tensorproductstandcostand} we obtain that $\Tor_{i>0}^A(DN, M)=0$ and $DN\otimes_A M\in R\proj$. This implies that $\Ext_A^{i>0}(M, N)=0$. In fact, $DN\otimes_A M^{\bullet}$ is an exact sequence, where $M^{\bullet}$ denotes a projective resolution of $M$. Since $DN\otimes_A M\in R\proj$ applying $D$ we obtain that $\Hom_A(M^{\bullet}, DDN)$ is an exact sequence. By Theorem \ref{filtrationsintermsofext}, (a) follows.

		Let $N\in \mathcal{F}(\Cssim)$. Then, again since the functor $R(\mi)\otimes_R -\colon A\m\rightarrow A(\mi)\m$ is exact on filtrations of $N$, $N(\mi)\in \mathcal{F}(\Cs(\mi))$, $\mi\in \MaxSpec R$.
		Conversely, assume that $N\in R\proj$ and $N(\mi)\in \mathcal{F}(\Cs(\mi))$ for every maximal ideal $\mi$ in $R$. Then, $DN(\mi)\simeq\Hom_{R(\mi)}(N(\mi), R(\mi))\in \mathcal{F}(\St_{A^{op}}(\mi))$. By (a), $DN\in \mathcal{F}(\Stsim_{A^{op}})$ hence $N\in \mathcal{F}(\Cssim)$. So, (b) follows.
		
		Applying (a) and (b) to Theorem \ref{severalpropertiestilting}, (c) follows. 
	\end{proof}
	
	\begin{Prop}\label{standardsrreductionfields}
		Let $(A, \{\Delta(\lambda)_{\lambda\in \Lambda}\})$ be a split quasi-hereditary algebra. Let $M\in A\m$. Then, the following assertions hold.
		\begin{enumerate}
			\item If $M\in \mathcal{F}(\Stsim)$ and $M(\mi)\simeq \St(\l)(\mi)$ for some $\l\in \L$ for every maximal ideal $\mi$ of $R$, then $M\simeq \St(\l)\otimes F$ for some $F\in Pic(R)$.
			\item If $M\in \mathcal{F}(\Cssim)$ and $M(m)\simeq \Cs(\l)(\mi)$ for some $\l\in \L$ for every maximal ideal $\mi$ of $R$, then $M\simeq \Cs(\l)\otimes F$ for some $F\in Pic(R)$.
		\end{enumerate}
	\end{Prop}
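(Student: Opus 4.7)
The plan is to establish part (1) directly and then deduce part (2) by duality through Corollary \ref{dualofstandardiscostandard}. My strategy for part (1) has three ingredients: reduce an arbitrary $\St$-filtration of $M$ to one involving only $\St(\l)$-factors; show that such factors admit no nontrivial self-extensions; then split the filtration and identify the remaining multiplicity with an invertible $R$-module. For the reduction, I fix a filtration $0=M_{t+1}\subset \cdots \subset M_1=M$ with $M_i/M_{i+1}\simeq \St(\sigma(i))\otimes_R U_i$, $U_i\in R\proj$. Base-changing to each residue field $R(\mi)$ yields a filtration of $M(\mi)\simeq \St(\l)(\mi)$ over the finite-dimensional split quasi-hereditary algebra $A(\mi)$, with factors $\St(\sigma(i))(\mi)^{\rank_{R(\mi)} U_i(\mi)}$. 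Since multiplicities of standards in $\St$-filtrations over a field are well-defined (and can be read off from $\dim \Hom_{A(\mi)}(M(\mi),\Cs(\mu)(\mi))$), they must equal $\delta_{\mu,\l}$, so $\sum_{i\colon \sigma(i)=\mu} \rank U_i(\mi)=\delta_{\mu,\l}$ at every $\mi$. For each $i$ with $\sigma(i)\neq \l$ this forces $U_i(\mi)=0$ at every $\mi$, hence $U_i=0$ by Nakayama.

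Next I would verify $\Ext_A^1(\St(\l)\otimes_R V,\St(\l)\otimes_R W)=0$ for all $V,W\in R\proj$, using the presentation $0\to C(\l)\to P(\l)\to \St(\l)\to 0$ of Definition \ref{qhdef}(iv). Since $\St(\l)\in R\proj$, the sequence splits over $R$, so tensoring with $V$ preserves exactness while keeping $P(\l)\otimes_R V$ in $A\proj$. Tensor-hom adjunction and Lemma \ref{tensorprojcommutingonHom} identify $\Hom_A(C(\l)\otimes_R V,\St(\l)\otimes_R W)$ with $\Hom_R(V,\Hom_A(C(\l),\St(\l))\otimes_R W)$, which vanishes because $C(\l)\in \mathcal{F}(\Stsim_{\mu>\l})$ and axiom (ii) of Definition \ref{qhdef} gives $\Hom_A(\St(\mu),\St(\l))=0$ for $\mu>\l$. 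A downward induction on the reduced filtration then splits every short exact sequence, producing $M\simeq \St(\l)\otimes_R F$ with $F:=\bigoplus_i U_i\in R\proj$; the rank count $\rank F(\mi)=\sum \rank U_i(\mi)=1$ combined with Remark \ref{invertiblemodulesresidue} places $F$ in $Pic(R)$.

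Part (2) then follows by duality: by Lemma \ref{extvanishesforfiltratedmodules}(d), $M\in R\proj$, so $DM\in \mathcal{F}(\Stsim_{A^{op}})$ via Corollary \ref{dualofstandardiscostandard}, with $DM(\mi)\simeq \St_{op}(\l)(\mi)$. Part (1) applied to $A^{op}$ yields $DM\simeq \St_{op}(\l)\otimes_R F'$ with $F'\in Pic(R)$, and applying $D$ together with $\St_{op}(\l)=D\Cs(\l)$, $M\simeq DDM$, and $F'\in R\proj$ gives $M\simeq DF'\otimes_R \Cs(\l)$, where $DF'\in Pic(R)$. The main obstacle is the reduction step: one must carefully transport the classical well-definedness of multiplicities at each residue field back to the vanishing of the finitely generated projective modules $U_i$ themselves. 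Once this and the Ext-vanishing step are in place, the remainder is a routine splitting argument combined with the standard duality already established in Corollary \ref{dualofstandardiscostandard}.
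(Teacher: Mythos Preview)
Your argument is correct, but the paper's proof is shorter because it exploits a canonical choice of filtration. The paper first reorders the filtration of $M$ so that each standard $\St_i$ appears exactly once, i.e.\ $M_i/M_{i+1}\simeq \St_i\otimes_R U_i$ with $i$ running over an increasing bijection $\L\to\{1,\ldots,t\}$. Then Proposition~\ref{filtrationofhom} identifies $\Hom_A(M,\Cs_i)\simeq DU_i$ directly, and the base change of Corollary~\ref{hommaximalforfiltrations} gives $DU_\mu(\mi)\simeq\Hom_{A(\mi)}(\St(\l)(\mi),\Cs(\mu)(\mi))=0$ for $\mu\neq\l$, whence $U_\mu=0$. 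Since only one factor survives, $M\simeq\St(\l)\otimes_R U(\l)$ on the nose, and the Ext-vanishing/splitting step you carry out is not needed. Your approach trades the dependence on Proposition~\ref{filtrationofhom} for a more hands-on argument (multiplicities at residue fields plus the explicit $\Ext^1$ computation via the presentation in Definition~\ref{qhdef}(iv)); this is slightly more self-contained but longer. Your duality argument for part~(2) is the same as what the paper leaves implicit.
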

	\begin{proof}
		Let $\L\rightarrow \{1, \ldots, t\}$, $\l\mapsto i_\l$ be an increasing bijection and set $\St_{i_\l}:=\St(\l)$.	Since $M\in \mathcal{F}(\Stsim)$ there is a filtration \begin{align}
			0=M_{n+1}\subset M_n\subset \cdots \subset M_1=M, \quad M_i/M_{i+1}\simeq \St_i\otimes_R U_i, \ U_i\in R\proj.
		\end{align}
		By Proposition \ref{filtrationofhom}, $
		\Hom_A(M, \Cs_i) \simeq \Hom_A(M/M_{i+1}, \Cs_i)\simeq \Hom_R(U_i, R)=DU_i.
		$
		Let $\mu\in \L\setminus \{\l \}$. Then, $
		\Hom_A(M, \Cs({\mu}))(\mi)\simeq \Hom_{A(\mi)}(M(\mi), \Cs({\mu})(\mi))\simeq \Hom_{A(\mi)}(\St(\l)(\mi), \Cs(\mu)(\mi))=0
		$ for every maximal ideal $\mi$ in $R$. Hence,  $DU(\mu)\simeq\Hom_A(M, \Cs(\mu))=0$, and thus ${M\simeq \St(\l)\otimes_R U(\l)}$ since each $U(\mu)\in R\proj$. Observe that $\St(\l)\simeq M(\mi)\simeq \St(\l)(\mi)\otimes_{R(\mi)} U(\l)(\mi)$ for every $\mi\in \MaxSpec R$. Hence, $U(\l)(\mi)\simeq R(\mi)$. Since $U(\l)\in R\proj$, Remark \ref{invertiblemodulesresidue} yields that $U(\l)\in Pic(R)$.
	\end{proof} Using the base change property presented in Corollary \ref{homofstandardcostandardisproj}, we can improve Proposition \ref{costandardsundergroundringchange} by seeing that the invertible module must be the regular module itself.

		\begin{Prop}\label{costandardsundergroundringchangetwo}
		Let $S$ be a commutative $R$-algebra and a Noetherian ring. 	Let $(A, \{\Delta(\lambda)_{\lambda\in \Lambda}\})$ be a split quasi-hereditary algebra. Then, the following assertions hold.
		\begin{enumerate}[(a)]
			\item The costandard modules of  $(S\otimes_R A, \{S\otimes_R \Delta(\lambda)_{\lambda\in \Lambda}\})$ are of the form $S\otimes_R \Cs(\l)$ with $\l\in \L$.
			\item Let $T=\bigoplus_{\l\in \L} T(\l)$ be a characteristic tilting module of $A$. Then $S\otimes_R T(\l)$ satisfies (\ref{eq121}) and (\ref{eq121b})) over $S\otimes_RA$ and $S\otimes_R T$ is a characteristic tilting module.
		\end{enumerate} 
	\end{Prop}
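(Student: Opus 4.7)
The plan is to exploit the existing Proposition \ref{costandardsundergroundringchange} together with the base change property for Hom from Corollary \ref{hommaximalforfiltrations}, so as to force the invertible twist to be trivial.

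First, by Proposition \ref{costandardsundergroundringchange}(a), the costandard modules of $(S\otimes_R A, \{S\otimes_R \Delta(\lambda)_{\lambda\in \Lambda}\})$ take the form $S\otimes_R \Cs(\l)\otimes_S U_\l$ for some $U_\l\in Pic(S)$. Thus to prove (a) it suffices to show $U_\l\simeq S$ as $S$-modules. By Proposition \ref{existenceofcostandardmodules}(ii) applied to the split quasi-hereditary $S$-algebra $S\otimes_R A$,
\begin{equation*}
\Hom_{S\otimes_R A}(S\otimes_R \St(\l), S\otimes_R \Cs(\l)\otimes_S U_\l)\simeq S.
\end{equation*}
On the other hand, since $U_\l\in Pic(S)\subset S\proj$, Lemma \ref{tensorprojcommutingonHom} (applied over the base ring $S$ and the projective Noetherian $S$-algebra $S\otimes_R A$) yields
\begin{equation*}
\Hom_{S\otimes_R A}(S\otimes_R \St(\l), S\otimes_R \Cs(\l)\otimes_S U_\l)\simeq \Hom_{S\otimes_R A}(S\otimes_R \St(\l), S\otimes_R \Cs(\l))\otimes_S U_\l.
\end{equation*}
Since $\St(\l)\in \mathcal{F}(\Stsim)$ and $\Cs(\l)\in \mathcal{F}(\Cssim)$, Corollary \ref{hommaximalforfiltrations} (with $Q=S$) together with Proposition \ref{existenceofcostandardmodules}(ii) for $A$ itself give
\begin{equation*}
\Hom_{S\otimes_R A}(S\otimes_R \St(\l), S\otimes_R \Cs(\l))\simeq S\otimes_R \Hom_A(\St(\l), \Cs(\l))\simeq S\otimes_R R\simeq S.
\end{equation*}
Combining the two displays yields $S\simeq S\otimes_S U_\l\simeq U_\l$, as desired, proving (a).

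For (b), the short exact sequences in Proposition \ref{tiltingconstruction} producing $T(\l)$,
\begin{equation*}
0\rightarrow \St(\l)\rightarrow T(\l)\rightarrow X(\l)\rightarrow 0, \qquad 0\rightarrow Y(\l)\rightarrow T(\l)\rightarrow \Cs(\l)\rightarrow 0,
\end{equation*}have all terms in $R\proj$ (by Lemma \ref{extvanishesforfiltratedmodules}(d)), hence they are $(A,R)$-exact. Therefore the functor $S\otimes_R -$ preserves them. Part (a) tells us that $S\otimes_R \Cs(\l)$ is the costandard module of $S\otimes_R A$ indexed by $\l$, and $S\otimes_R \St(\l)$ is the standard module by Proposition 4.14 of \cite{Rouquier2008}. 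Moreover, since the filtrations witnessing $X(\l)\in \mathcal{F}(\Stsim_{\mu<\l})$ and $Y(\l)\in \mathcal{F}(\Cssim_{\mu<\l})$ are also $(A,R)$-split, applying $S\otimes_R -$ yields analogous filtrations of $S\otimes_R X(\l)$ and $S\otimes_R Y(\l)$ over $S\otimes_R A$. Hence the resulting sequences are precisely those of the form \eqref{eq121} and \eqref{eq121b} for $S\otimes_R T(\l)$ over $S\otimes_R A$, so $S\otimes_R T=\bigoplus_{\l}S\otimes_R T(\l)$ is a characteristic tilting module for $(S\otimes_R A, \{S\otimes_R \Delta(\lambda)_{\lambda\in \Lambda}\})$, establishing (b).

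The only nontrivial step is extracting the triviality of the Picard twist $U_\l$ in part (a); once the Hom-base-change of Corollary \ref{hommaximalforfiltrations} is available, part (b) reduces to the observation already used inside the proof of Proposition \ref{costandardsundergroundringchange}(b) that the relevant short exact sequences are $(A,R)$-split and hence preserved by $S\otimes_R -$.
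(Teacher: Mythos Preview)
Your proof is correct and follows essentially the same approach as the paper: both arguments use Proposition~\ref{costandardsundergroundringchange} to get the costandard modules up to a Picard twist $U_\l$, then combine the characterisation $\Hom_{S\otimes_R A}(S\otimes_R \St(\l), S\otimes_R \Cs(\l)\otimes_S U_\l)\simeq S$ with the Hom base-change property (Corollary~\ref{hommaximalforfiltrations}) and $\Hom_A(\St(\l),\Cs(\l))\simeq R$ to force $U_\l\simeq S$; part~(b) then follows in both cases from the $(A,R)$-splitness of the relevant short exact sequences exactly as in Proposition~\ref{costandardsundergroundringchange}(b). Your write-up is slightly more detailed (you make the use of Lemma~\ref{tensorprojcommutingonHom} explicit), but the logical content is identical.
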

\begin{proof}
	By Proposition \ref{costandardsundergroundringchange}, for each $\l\in \L$ we can obtain $S\simeq \Hom_{S\otimes_R A}(S\otimes_R \St(\l), S\otimes_R \Cs(\l)\otimes_R U(\l))\simeq \Hom_{S\otimes_R A}(S\otimes_R \St(\l), S\otimes_R \Cs(\l))\otimes_SU_\l$. By Corollary \ref{homofstandardcostandardisproj}, the latter is isomorphic to \begin{align*}
		S\otimes_R \Hom_A(\St(\l), \Cs(\l))\otimes_S U(\l)\simeq (S\otimes_R R)\otimes_S U(\l)\simeq S\otimes_S U(\l)\simeq U(\l).
	\end{align*}
So, (a) follows. Using (a) and the same argument as in Proposition \ref{costandardsundergroundringchange}, (b) is clear.
\end{proof}

	\section{$\mathcal{F}(\Stsim)$ determines all equivalences of split highest weight categories} \label{Structure of split quasi-hereditary algebras}
	
	The full subcategory of $A\m $ $\mathcal{F}(\Stsim)$ completely determines the split quasi-hereditary algebra $A$.
	This result over finite-dimensional algebras appears in \cite{zbMATH00140218} and in \cite{MR1284468}.

	\begin{Theorem}\label{standardsgivesthewholealgebra}
		Let $(A\m, \{\Delta(\lambda)_{\lambda\in \Lambda}\})$ and $(B\m, \{\Omega(\theta)_{\theta\in \Theta}\})$ be two split highest weight categories. There exists an exact equivalence between $\mathcal{F}(\Stsim)$ and $\mathcal{F}(\tilde{\Omega})$ if and only if $A\m$ and $B\m$ are equivalent as split highest weight categories in the sense of \cite{Rouquier2008}.
	\end{Theorem}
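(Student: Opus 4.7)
The $(\Leftarrow)$ direction is routine: an equivalence of split highest weight categories in the sense of \cite{Rouquier2008} sends each $\St(\l)$ to a module of the form $\Omega(\sigma(\l)) \otimes_R U_\l$ for a poset isomorphism $\sigma \colon \L \to \Theta$ and some $U_\l \in Pic(R)$, and hence restricts to an exact equivalence between $\mathcal{F}(\Stsim)$ and $\mathcal{F}(\tilde{\Omega})$. All the work is in the $(\Rightarrow)$ direction, where one must lift an exact equivalence $\Phi\colon \mathcal{F}(\Stsim)\to \mathcal{F}(\tilde{\Omega})$ to a Morita equivalence $F\colon A\m \to B\m$ and then verify it preserves the highest weight structure.

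The first task is to show $\Phi(A)$ is a progenerator of $B\m$. The regular module $A$ lies in $\mathcal{F}(\Stsim)$: the modules $P(\l)$ of Definition \ref{qhdef}(iv) belong there by downward induction on the poset using extension-closure, and $\add_A A \subset \mathcal{F}(\Stsim)$ by Definition \ref{qhdef}(v). Lemma \ref{projectiveandinjectiveinsidefiltrations}(1) intrinsically characterises the $A$-projective objects inside $\mathcal{F}(\Stsim)$ as those $M \in \mathcal{F}(\Stsim)$ with $\Ext_A^1(M,\bigoplus_{\l\in\L} \St(\l))=0$. Since $\mathcal{F}(\Stsim)$ is closed under extensions, its Yoneda $\Ext^1$ as an exact category coincides with the ambient $\Ext_A^1$ on objects of $\mathcal{F}(\Stsim)$, and $\Phi$ preserves the former; hence $\Phi(A)$ is $B$-projective. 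The symmetric argument for $\Phi^{-1}$ shows every $B$-projective module, being in $\mathcal{F}(\tilde{\Omega})$, pulls back to a module in $\add_A A$, so that $\Phi(A)$ is also a generator.

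Morita theory then furnishes a $(B,A)$-bimodule structure on $\Phi(A)$, with right $A$-action coming from $\End_A(A)^{op}=A \xrightarrow{\sim} \End_B(\Phi(A))^{op}$ induced by $\Phi$, and yields the equivalence $F=\Phi(A)\otimes_A-\colon A\m \to B\m$. To see $F|_{\mathcal{F}(\Stsim)} \cong \Phi$, note both functors are exact and send $A$ to $\Phi(A)$; by construction of the right $A$-action on $\Phi(A)$, they also agree on morphisms between finitely generated free $A$-modules, and every $M\in \mathcal{F}(\Stsim)\subset A\m$ admits a projective presentation $P_1\to P_0\to M\to 0$, so exactness propagates the isomorphism to all of $\mathcal{F}(\Stsim)$.

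The main obstacle is showing $F$ is an equivalence of split highest weight categories. The plan is to reduce to the classical finite-dimensional case. For each $\mi\in\MaxSpec R$, base change preserves projectivity and the membership $B \in \add_B \Phi(A)$, so $\Phi(A)(\mi)$ is a progenerator of $B(\mi)\m$, and $F$ descends to a Morita equivalence $F(\mi)\colon A(\mi)\m \to B(\mi)\m$ which restricts to an exact equivalence $\mathcal{F}(\St(\mi)) \to \mathcal{F}(\Omega(\mi))$. The classical theorem over fields (cf.~\cite{zbMATH00140218,MR1284468}) then produces a poset isomorphism $\sigma_\mi\colon \L\to \Theta$ with $F(\St(\l))(\mi)\cong \Omega(\sigma_\mi(\l))(\mi)$. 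The delicate point is globalising: $F(\St(\l))\in \mathcal{F}(\tilde{\Omega})$ has a filtration with factors $\Omega(\theta)\otimes_R U_\theta^{(\l)}$, and the length-one condition at each residue forces all but one $U_\theta^{(\l)}$ to vanish at $\mi$ while the surviving one has residue-rank one. On each connected component of $\Spec R$ the winning index $\theta=\sigma(\l)$ is therefore constant, and combining Proposition \ref{standardsrreductionfields}(1) with Remark \ref{invertiblemodulesresidue} identifies $F(\St(\l))\cong \Omega(\sigma(\l))\otimes_R U_\l$ with $U_\l\in Pic(R)$. The induced bijection $\sigma$ is order-preserving because the partial order on $\L$ can be recovered from the Hom- and $\Ext^1$-vanishing among standard modules, a datum $\Phi$ preserves.
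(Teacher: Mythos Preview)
Your overall strategy—showing $\Phi(A)$ is a progenerator via the intrinsic characterisation of projectives in Lemma~\ref{projectiveandinjectiveinsidefiltrations}, then lifting via Morita theory—matches the paper's. The paper phrases the lift as $\Hom_B(\Phi(A),-)$ extending $\Phi^{-1}$; your $\Phi(A)\otimes_A-$ extending $\Phi$ is the adjoint formulation and the verification via projective presentations (using that $\mathcal{F}(\Stsim)$ is resolving) is fine.

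The genuine difference is in proving the lift $F$ sends each $\St(\l)$ to $\Omega(\sigma(\l))\otimes_R U_\l$ with $U_\l\in Pic(R)$. The paper argues \emph{directly over $R$} by reverse induction on a linearisation of $\L$: for each $t$ it shows that any $\Omega(\nu)$ appearing at the bottom of a filtration of $H\St_{t-1}$ either accounts for all of $H\St_{t-1}$ or has zero multiplicity, by pulling back along the quasi-inverse $G$ and analysing which quotients of $\St_{t-1}$ can lie in $\mathcal{F}(\Stsim)$. Your route instead base-changes to each residue field $R(\mi)$, invokes the classical field case to obtain a bijection $\sigma_\mi$, and globalises via local constancy of the ranks of the filtration multiplicities $U_\theta^{(\l)}$. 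Your approach is more conceptual and leverages the finite-dimensional theory; the paper's is self-contained and avoids both the external citation and the need to check that $F(\mi)$ really restricts to an equivalence $\mathcal{F}(\St(\mi))\simeq\mathcal{F}(\Omega(\mi))$.

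One subtlety your globalisation step does not fully resolve: when $\Spec R$ is disconnected the bijection $\sigma_\mi$ can in principle vary across components (take $R=k\times k$, $A=B$ semisimple with two standards, and $\Phi=\id\times(\text{swap})$). Your sentence ``on each connected component the winning index is constant'' acknowledges this but you then write $\Omega(\sigma(\l))$ as though $\sigma(\l)$ were a single element of $\Theta$. To be fair, the paper's direct argument has the same lacuna: its assertion that the surjection $\St_{t-1}\to\St_{t-1}\otimes_R S_{t-1}$ ``becomes an isomorphism over $A(\mi)$ for all $\mi$'' fails on components where $S_{t-1}$ has rank zero. For the existential ``if and only if'' of Theorem~\ref{standardsgivesthewholealgebra} this is harmless (one may work componentwise), but the stronger claim in the introduction that the \emph{given} $\Phi$ lifts to an equivalence of highest weight categories needs $\Spec R$ connected or additional argument.
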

	\begin{proof}
		Assume that $A\m$ and $B\m$ are equivalent as split highest weight categories. By definition, there exists an exact equivalence functor $F\colon A\m\rightarrow B\m$ satisfying $F\St(\l)\simeq \Omega(\phi(\l))\otimes_R U_\l$, $\l\in \L$, $U_\l\in Pic(R)$. It follows that the restriction of $F$ to $\mathcal{F}(\Stsim)$ has image in $\mathcal{F}(\tilde{\Omega})$ which is again fully faithful and exact.
		
		Conversely, let $H\colon \mathcal{F}(\Stsim)\rightarrow \mathcal{F}(\tilde{\Omega})$ and $G\colon \mathcal{F}(\tilde{\Omega})\rightarrow \mathcal{F}(\Stsim)$ be exact equivalences. We claim that $H$ sends projective $A$-modules to projective $B$-modules. Let $P\in A\proj$. Let $0\rightarrow \Omega_\theta \rightarrow X\rightarrow HP\rightarrow 0$ be a short exact sequence  for some $\theta\in \Theta$ and denote it by $\rho$. By assumption, $HP\in \mathcal{F}(\tilde{\Omega})$ and so $\mathcal{F}(\tilde{\Omega})$ being closed under extensions implies that $\rho$ is an exact sequence of modules belonging to $\mathcal{F}(\tilde{\Omega})$.  Since $GHP\simeq P$ applying $G$ to $\rho$  yields a split exact sequence over $A$. Hence the exact sequence $HG\rho$ is also split and being equivalent to $\rho$ we obtain that $\rho$ splits over $B$. This means that $\Ext_B^1(HP, \Omega(\theta))=0$ for all $\theta\in \Theta$. 
		By Theorem \ref{projectiveandinjectiveinsidefiltrations}, $HP\in B\proj$. In particular, $HA\in B\proj$. Symmetrically, $GQ\in A\proj$ for all $Q\in B\proj$, and in particular, $GB\in A\proj$. Therefore $GB\bigoplus K\simeq A^s$ for some $K\in A\m$ and some $s\in \mathbb{N}$. Applying $H$ yields that $B\bigoplus HK\simeq HA^s$. Therefore, $HA$ is a $B$-progenerator. 	By Morita theory, the functor $\Hom_B(HA, -)\colon B\m\rightarrow A\m$ is an exact equivalence of categories. In particular, this functor lifts $G$. In fact, for any $X\in \mathcal{F}(\tilde{\Omega})$, $\Hom_B(HA, X)\simeq \Hom_A(GHA, GX)\simeq \Hom_A(A, GX)\simeq GX$. By applying $R(\mi)\otimes_R -$ it follows that the cardinality of $\L$ and of $\Theta$ coincide with the number of non-isomorphic simple $A(\mi)$-modules.
		
		Let $\L\rightarrow \{1, \ldots, |\L|\}$, $\l\mapsto i_\l$ be an increasing bijection and set $\St_{i_\l}:=\St(\l)$.	We now claim that for each $t=1, \ldots, |\L|$ there exists $\theta_t\in \Theta$ and $U_t\in R\proj$ so that $H\St_t\simeq \Omega(\theta_t)\otimes_R U_t$.
		We shall proceed by reverse induction on $t$. 	Assume the result is known for some $t\in \mathbb{N}$ with $1<t<|\L|$. 
		First we must observe that $H\St_{t-1}\in \mathcal{F}(\tilde{\Omega}_{\Theta\setminus\{\theta_{{|\L|}}, \ldots, \theta_{{t}} \} })$. In fact, assume that $l$ is the highest element in $\{t, \ldots, |\L|\}$ so that $\Omega(\theta_l)$ appears in a filtration of $H\St_{t-1}$. Then there exists an exact sequence $0\rightarrow \Omega(\theta_l)\otimes_R S_l\rightarrow H\St_{t-1}\rightarrow X\rightarrow 0$, with $S_l\in R\proj$. Applying $G$ after $U_l\otimes_R -$ yields by construction that there exists a non-zero homomorphism from $\St_l\otimes_R S_l$ to $\St_{t-1}\otimes_R U_l$ which cannot happen because $l>t-1$ (the latter can be seen by applying $R(\mi)\otimes_R -$ to this $(A, R)$-monomorphism).
		Thus, there exists an exact sequence $0\rightarrow \Omega(\nu)\otimes_R S_\nu \rightarrow H\St_{t-1}\xrightarrow{\pi} X\rightarrow 0$ with $\nu\in \Theta\setminus\{\theta_{{|\L|}}, \ldots, \theta_{{t+1}} \}$ a maximal element, $X\in \mathcal{F}(\tilde{\Omega}_{\Theta\setminus\{\theta_{{|\L|}}, \ldots, \theta_{{t+1}}, \nu \} })$ and $S_\nu\in R\proj$. If $\pi=0$ then we fix $\theta_{t-1}=\nu$. Otherwise, applying $G$ yields an exact sequence $0\rightarrow G \Omega(\nu)\otimes_R S_\nu\rightarrow \St_{t-1}\rightarrow GX\rightarrow 0$ with $GX\in \mathcal{F}(\Stsim_{l<t})$. It follows that since $G\pi\neq 0$ that $GX\simeq \St_{t-1}\otimes_R S_{t-1}$ for some $S_{t-1}\in R\proj$. Further since $R(\mi)\otimes_R -$ is right exact the surjective map $\St_{t-1}\rightarrow \St_{t-1}\otimes_R S_{t-1}$ induced by $G\pi$ becomes an isomorphism over $A(\mi)$ for  all $\mi\in \MaxSpec R$. By Nakayama's Lemma this map must be an isomorphism over $A$ and so $G\pi$ is also an isomorphism. This means that $S_\nu=0$ and $H\St_{t-1}\in\mathcal{F}(\tilde{\Omega}_{\Theta\setminus\{\theta_{{|\L|}}, \ldots, \theta_{{t+1}}, \nu \} })$.  By going through all possible standard modules that might appear in the filtration of $H\St_{t-1}$ we obtain (eventually after a finite number of steps)  that $H\St_{t-1} \simeq \Omega(\theta_{t-1})\otimes_R U_{t-1}$ for some $U_{t-1}\in R\proj$,  and $\theta_{t-1}\in \Theta\setminus\{\theta_{{|\L|}}, \ldots, \theta_{{t+1}} \} $.
		Using the same reasoning, we obtain that the result holds for $\St_{|\L|}\in A\proj$ and so the claim follows.
		
		Moreover, we constructed an increasing bijection $\{1, \ldots, |\L| \}\rightarrow \Theta$ and so a bijection of posets ${\phi\colon \L\rightarrow \Theta}$.
		Symmetrically applying the same arguments reversing the roles of $\St$ and $\Omega$, we obtain that, for all $t\in \{1, \ldots, |\L| \}$, there exists $i_t\in \{1, \ldots, |\L|\}$ so that $G\Omega(\theta_t)\simeq \St_{i_t}\otimes_R F_t$ for some $F_t\in R\proj$. Hence, $\Omega(\theta_t)\simeq H\St_{i_t}\otimes_R F_t\simeq \Omega(\theta_{i_t})\otimes_R U_{i_t}\otimes_R F_t$. It follows that $\theta_t=\theta_{i_t}$ and both $F_t$ and $U_{i_t}$ are invertible $R$-modules.
		
		It follows that $\Hom_B(HA, \Omega(\phi(\mu)))\simeq G\Omega(\phi(\mu))\simeq \St(\mu)\otimes_R U_\mu$, $U_\mu\in Pic(R)$, for all $\mu\in \L$.
	\end{proof}

	\section{Ringel duality}\label{Ringel duality}
	
	Characteristic tilting modules allow us also in the integral setup to construct new split quasi-hereditary algebras, and to see that in fact split quasi-hereditary algebras over Noetherian rings do come in pairs. The origin of this phenomenon is in \cite{MR1128706} inspired by the work developed in \cite{zbMATH00010169}. The former studies the endomorphism algebra of a characteristic tilting module over an Artinian quasi-hereditary algebra.

	\begin{Lemma}\label{functorringeldualpart1}
		Let $(A, \{\Delta(\lambda)_{\lambda\in \Lambda}\})$ be a split quasi-hereditary algebra. Assume that $T=\sumSt T(\l)$ is a characteristic tilting module. Fix $B=\End_A(T)^{op}$. 
		Then the following assertions hold.
		\begin{enumerate}[(a)]
			\item The functor $G=\Hom_A(T, -)\colon A\M\rightarrow B\M$ restricts to an exact equivalence between $\mathcal{F}(\Cssim)$ and $\mathcal{F}(\Stsim_B)$ with $\St_B(\l):=G\Cs(\l)$, $\l\in\L$. 
			\item $(B, \{\Delta_B(\lambda)_{\lambda\in \Lambda^{op}}\})$ is a split quasi-hereditary $R$-algebra, where $\L^{op}$ is the set $\L$ together with the partial order $\leq_B$ defined in the following way:
			$\l\leq_B \mu $ if and only if $\l\geq \mu$.
		\end{enumerate}
	\end{Lemma}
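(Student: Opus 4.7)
The overall plan is to view $T$ as a generalised tilting object inducing a Brenner--Butler-type equivalence between $\mathcal{F}(\Cssim)$ and the corresponding standardly-filtered subcategory over $B$. Parts (a) and (b) are logically intertwined, since identifying the essential image of $G$ with $\mathcal{F}(\Stsim_B)$ requires first knowing that $B$ is split quasi-hereditary with standards $\St_B(\l)=G\Cs(\l)$. I would therefore establish a fully faithful exact embedding first, then verify the axioms of Definition \ref{qhdef} for $B$, and only then conclude essential surjectivity.

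For (a), exactness of $G$ on $\mathcal{F}(\Cssim)$ is immediate from Theorem \ref{severalpropertiestilting}(c), which gives $\Ext_A^{i>0}(T, N) = 0$ for every $N \in \mathcal{F}(\Cssim)$. Projectivisation at the generator $T$ of $\add T$ (which sits inside $\mathcal{F}(\Cssim)$ by Theorem \ref{severalpropertiestilting}(e), so exactness of $G$ applies) realises $G$ as an exact equivalence $\add T \to B\proj$ with $GT\simeq B$. Theorem \ref{severalpropertiestilting}(d) then supplies every $N \in \mathcal{F}(\Cssim)$ with a finite $\add T$-resolution $0 \to T_r \to \cdots \to T_0 \to N \to 0$; applying $G$ produces a finite projective resolution of $GN$ in $B\m$. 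An induction using the five-lemma on the length of this resolution extends full faithfulness from $\add T$ to all of $\mathcal{F}(\Cssim)$, giving $\Hom_A(M, N) \simeq \Hom_B(GM, GN)$ for $M, N \in \mathcal{F}(\Cssim)$.

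For (b), I would verify the five axioms of Definition \ref{qhdef} for $(B, \{\St_B(\l)\}_{\l\in\L^{op}})$ directly. Axiom (i) follows from Corollary \ref{homofstandardcostandardisproj}, which gives $\St_B(\l) = \Hom_A(T, \Cs(\l)) \in R\proj$. Axiom (ii) follows from full faithfulness combined with Theorem \ref{quasihereditaryintermsofcostandards}(ii): $\Hom_B(\St_B(\l), \St_B(\mu)) \simeq \Hom_A(\Cs(\l), \Cs(\mu))$ is nonzero only if $\l \geq \mu$, i.e.\ $\l \leq_B \mu$. Axiom (iii) is $\End_B(\St_B(\l)) \simeq \End_A(\Cs(\l)) \simeq R$. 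For axiom (iv), apply $G$ to the sequence (\ref{eq121b}) from Proposition \ref{tiltingconstruction} to obtain $0 \to GY(\l) \to GT(\l) \to \St_B(\l) \to 0$: here $GT(\l)$ is a summand of $GT\simeq B$ hence projective, and by Lemma \ref{tensorprojcommutingonHom} applied termwise to an $R$-split filtration of $Y(\l) \in \mathcal{F}(\Cssim_{\mu<\l})$, $GY(\l)$ inherits a filtration by $\St_B(\mu) \otimes_R U_\mu$ with $\mu <_A \l$, equivalently $\mu >_B \l$. Axiom (v) is automatic from $\bigoplus_{\l\in\L} GT(\l) = GT \simeq B$. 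Essential surjectivity of $G$ onto $\mathcal{F}(\Stsim_B)$ then follows by induction on the length of $\St_B$-filtrations: the generators $\St_B(\l) \otimes_R U \simeq G(\Cs(\l) \otimes_R U)$ lie in the image, and full faithfulness plus exactness promote $B$-extensions of such modules to $A$-extensions inside $\mathcal{F}(\Cssim)$.

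The main obstacle I anticipate is the essential surjectivity together with the compatibility of $G$ with $\Ext^1$, since $\mathcal{F}(\Cssim)$ and $\mathcal{F}(\Stsim_B)$ are only exact (not abelian) subcategories of their ambient module categories. One needs to verify that every extension of $\St_B(\l) \otimes_R U$ by $GN'$ in $B\m$ is induced by a genuine extension of $\Cs(\l) \otimes_R U$ by $N'$ in $A\m$ which itself remains in $\mathcal{F}(\Cssim)$; this uses the $R$-splitness of filtrations in $\mathcal{F}(\Cssim)$ established in Lemma \ref{extvanishesforfiltratedmodules}(d) together with the projectivisation-based five-lemma comparison of $\Ext^1$-groups via the finite $\add T$-resolutions supplied by Theorem \ref{severalpropertiestilting}(d).
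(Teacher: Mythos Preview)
Your proposal is correct and follows essentially the same route as the paper: exactness via $\Ext^{i>0}_A(T,-)$ vanishing on $\mathcal{F}(\Cssim)$, full faithfulness extended from $\add T$ to $\mathcal{F}(\Cssim)$ via $\add T$-presentations, direct verification of the axioms of Definition~\ref{qhdef} for $B$ (using Corollary~\ref{homofstandardcostandardisproj}, Theorem~\ref{quasihereditaryintermsofcostandards}, and the image under $G$ of the sequence~(\ref{eq121b})), and essential surjectivity by induction on filtration length via an $\Ext^1$ comparison. The only cosmetic difference is in the $\Ext^1$ step you flag as the obstacle: the paper obtains $\Ext_A^1(\Cs_i\otimes_R U_i, N)\simeq \Ext_B^1(G(\Cs_i\otimes_R U_i), GN)$ directly from the single short exact sequence $0\to Y_i\to T_i\to \Cs_i\to 0$ (tensored with $U_i$) rather than from a general $\add T$-resolution, which is slightly more economical but amounts to the same diagram chase you describe.
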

	\begin{proof}
		The functor $\Hom_A(T, -)$ is exact on $\mathcal{F}(\Cssim)$ because $\Ext_A^1(T, M)=0$ for every $M\in \mathcal{F}(\Cssim)$.
		Let $N\in \mathcal{F}(\Cssim)$. Let $\St\rightarrow \{1, \ldots, n\}$, $\St_i\mapsto i$ be an increasing bijection.  Hence, we have a filtration \begin{align}
			0\subset I_1\subset \cdots \subset I_n=N, \quad I_i/I_{i-1}\simeq I_i\otimes_R U_i, \ i=1, \ldots, n.
		\end{align} Applying $\Hom_A(T, -)$ yields the exact sequence
		\begin{align}
			0\rightarrow \Hom_A(T, I_{i-1})\rightarrow \Hom_A(T, I_i)\rightarrow \Hom_A(T, \Cs_i\otimes_R U_i)\rightarrow 0.
		\end{align} By Lemma \ref{tensorprojcommutingonHom}, $\Hom_A(T, \Cs_i\otimes_R U_i)\simeq \Hom_A(T, \Cs_i)\otimes_R U_i$.
		So, $\Hom_A(T, -)$ sends a module $N\in \mathcal{F}_A(\Cssim)$ to $\Hom_A(T, N)\in \mathcal{F}_B(\widetilde{\Hom_A(T, \Cs))}$. Fix $\St_B(i)=G\Cs_i$. We shall now prove that $G$ is full and faithful on $\mathcal{F}(\Cssim)$. Let $Y\in A\m$. Then,
		\begin{align}
			\Hom_A(T, Y)\simeq G(Y)=\Hom_B(B, GY)\simeq \Hom_B(\Hom_A(T, T), GY)\simeq \Hom_B(GT, GY).
		\end{align}Hence, for any $X\in \add T$, we have $
		\Hom_A(X, Y)\simeq \Hom_B(GX, GY)$ for all  $Y\in A\m.
		$ Let $X\in \mathcal{F}_A(\Cssim)$. By Theorem \ref{severalpropertiestilting}, there exists an $\add T$-presentation
		$
		T_1\rightarrow T_0\rightarrow X\rightarrow 0
		$. Applying $\Hom_A(-, Y)$ and $\Hom_B(G-, GY)$ we obtain the following commutative diagram with exact rows
		\begin{center}
			\begin{tikzcd}
				0\arrow[r]& \Hom_A(X, Y) \arrow[r]\arrow[d]& \Hom_A(T_0, Y)\arrow[r]\arrow[d, "\simeq"]& \Hom_A(T_1, Y)\arrow[d, "\simeq"]\\
				0\arrow[r]& \Hom_B(GX, GY)\arrow[r]&\Hom_B(GT_0, GY)\arrow[r]&\Hom_B(GT_1, GY)
			\end{tikzcd}.
		\end{center} 
		By diagram chasing, $\Hom_A(X, Y)\simeq \Hom_B(GX, GY)$ for all $X, Y\in \mathcal{F}_A(\Cssim)$. 
		
		Now we claim that $\Ext_A^1(U_i\otimes_R \Cs_i, N)\simeq \Ext_B^1(G(U_i\otimes_R \Cs_i), GN)$ for all $N\in \mathcal{F}_A(\Cssim)$ and $U_i\in R\proj$.
		
		Consider the exact sequence
		$
		0\rightarrow Y_i\rightarrow T_i\rightarrow \Cs_i\rightarrow 0.
		$ Applying $U_i\otimes_R -$ we get the exact sequence
		\begin{align}
			0\rightarrow U_i\otimes_R Y_i\rightarrow U_i\otimes_R T_i\rightarrow U_i\otimes_R \Cs_i\rightarrow 0.
		\end{align}Let $N\in \mathcal{F}_A(\Cssim)$. Recall that $G(T_i\otimes_R U_i)\in B\proj$, so applying $\Hom_A(-, N)$ and $\Hom_B(-, GN)\circ G$ we obtain the following commutative diagram with exact rows
		\begin{center}
			\begin{tikzcd}
				\Hom_A(T_i\otimes_R U_i, N)\arrow[r]\arrow[d, "\simeq"]& \Hom_A(Y_i\otimes_R U_i, N)\arrow[r]\arrow[d, "\simeq"]& \Ext_A^1(\Cs_i\otimes_R U_i, N)\arrow[r]\arrow[d]& 0\\
				\Hom_B(G(T_i\otimes_R U_i), GN)\arrow[r]& \Hom_B(GY_i\otimes_R U_i, GN)\arrow[r]&\Ext_B^1(G\Cs_i\otimes_R U_i, GN)\arrow[r] & 0
			\end{tikzcd}.
		\end{center} It follows by diagram chasing that $\Ext_B^1(G(\Cs_i\otimes_R U_i), GN)\simeq \Ext_A^1(\Cs_i\otimes_R U_i, N)$.
		
		Now consider $X\in \mathcal{F}_B(\Stsim_B)$. Then, there is a filtration \begin{align}
			0\subset X_1\subset \cdots \subset X_n=X, \quad X_i/X_{i-1}\simeq \St_B(i)\otimes_R U_i, \ U_i\in R\proj.
		\end{align}We claim that there exists $N\in \mathcal{F}_A(\Cssim)$ such that $GN=X$. We will prove it by induction on the size of the filtration of $X$. It is clear that
		$
		X_1=\St_B(1)\otimes_R U_1\simeq G\Cs_1\otimes_R U_1\simeq G(\Cs_1\otimes_R U_1).
		$  Assume that the result holds for $X_{i-1}$. Consider the exact sequence 
		\begin{align}
			0\rightarrow X_{i-1}\rightarrow X_{i}\rightarrow \St_B(i)\otimes_R U_i\rightarrow 0.\label{eq176}
		\end{align}Here, $\St_B(i)\otimes_R U_i\simeq G(\Cs_i\otimes_R U_i)$. By induction, $X_{i-1}\simeq GN_{i-1}$ for some $N_{i-1}\in \mathcal{F}(\Cssim)$. So, the exact sequence in (\ref{eq176}) belongs to $\Ext_B^1(G(\Cs_i\otimes_R U_i), GN_{i-1})$. Hence, there exists an exact sequence 
		\begin{align}
			0\rightarrow N_{i-1}\rightarrow N_i\rightarrow \Cs_i\otimes_R U_i\rightarrow 0
		\end{align} and its image by $G$ is isomorphic to (\ref{eq176}). In particular, $GN_i\simeq X_i$. It follows that  $G\colon \mathcal{F}(\Cssim)\rightarrow \mathcal{F}(\Stsim_B)$ is dense and we obtain (a).
		
		By Corollary \ref{homofstandardcostandardisproj}, $\St_B(\l)=\Hom_A(T, \Cs(\l))\in R\proj$. Further, $GT\simeq B$ is a progenerator of $B\m$.
		
		Assume that $\Hom_B(\St_B(\l'), \St_B(\l''))\neq0$. Then, $
		0\neq \Hom_B(G\Cs(\l'), G\Cs(\l''))\simeq \Hom_A(\Cs(\l'), \Cs(\l'')).
		$ By Theorem \ref{quasihereditaryintermsofcostandards}, $\l'\geq \l''$. Thus, $\l'\leq_R \l''$.

		Since $Y(\l)\in \mathcal{F}(\Cssim_{\mu<\l})$, it follows that $GY(\l)\in \mathcal{F}(\Stsim_{B_{\mu<\l}})=\mathcal{F}(\Stsim_{B_{\mu>_B\l}})$.
		As $T(\l)\in \add T$, it follows that $GT(\l)\in B\proj$. So, the exact sequence $
		0 \rightarrow GY(\l)\rightarrow GT(\l)\rightarrow \St_B(\l)\rightarrow 0
		$ satisfies $(iv)$ of Definition \ref{qhdef}. Since $G$ is full and faithful on $\mathcal{F}(\Cssim)$, the following holds \begin{align}
			\End_B(\St_B(\l))\simeq \End_B(G\Cs(\l))\simeq \End_A(\Cs(\l))\simeq R.
		\end{align} Thus, (b) holds.
	\end{proof}

	The \textbf{Ringel dual of a split quasi-hereditary algebra} $(A, \{\Delta(\lambda)_{\lambda\in \Lambda}\})$ (or just of $A$ when the there is no ambiguity on the underlying quasi-hereditary structure), is, up to Morita equivalence, the endomorphism algebra $\End_A(T)^{op}$ of a characteristic tilting module $T$ of $A$. We will write $R(A)$ to denote a Ringel dual of $A$.  As in the classical case \citep[Theorem 7]{MR1128706} computing the Ringel dual of a Ringel dual yields back the original split quasi-hereditary structure.

	\begin{Prop}\label{ringeldualsplitqhmorita}
		Let $(A, \{\Delta(\lambda)_{\lambda\in \Lambda}\})$ be a split quasi-hereditary algebra. Let $R(A)$ be a Ringel dual of $A$. Then, $R(R(A))$ is Morita equivalent to $A$ as split quasi-hereditary algebra.
	\end{Prop}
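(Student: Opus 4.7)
The plan is to construct an exact equivalence $\mathcal{F}(\Stsim_A)\to \mathcal{F}(\Stsim_{R(R(A))})$ and then invoke Theorem \ref{standardsgivesthewholealgebra}, which automatically upgrades such an equivalence to a Morita equivalence between $A$ and $R(R(A))$ preserving the split quasi-hereditary structure. Write $B=R(A)=\End_A(T)^{op}$ for a chosen characteristic tilting module $T$ of $A$. Lemma \ref{functorringeldualpart1}(a) already supplies one half of the picture, namely the exact equivalence $G_A=\Hom_A(T,-)\colon \mathcal{F}(\Cssim_A)\to\mathcal{F}(\Stsim_B)$. What is missing is a bridge from $\mathcal{F}(\Stsim_A)$ to $\mathcal{F}(\Cssim_B)$, and the natural tool is the standard duality $D=\Hom_R(-,R)$ combined with Lemma \ref{characteristictiltingforaop}.

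Concretely, I would consider the composition
\begin{equation*}
	\mathcal{F}(\Stsim_A)\xrightarrow{D}\mathcal{F}(\Cssim_{A^{op}})\xrightarrow{G_{A^{op}}}\mathcal{F}(\Stsim_{B^{op}})\xrightarrow{D}\mathcal{F}(\Cssim_B)\xrightarrow{G_B}\mathcal{F}(\Stsim_{R(B)}),
\end{equation*}
where $G_{A^{op}}$ and $G_B$ arise from Lemma \ref{functorringeldualpart1}(a) applied to the split quasi-hereditary algebras $(A^{op},\{D\Cs(\l)\}_{\l\in\L})$ (whose characteristic tilting module is $DT$ by Lemma \ref{characteristictiltingforaop}) and $B$ (for any choice of characteristic tilting), respectively. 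Each $D$ is contravariant exact because Lemma \ref{extvanishesforfiltratedmodules}(d) forces all the modules in sight to lie in $R\proj$, while Corollary \ref{dualofstandardiscostandard}, applied to both $A$ and $B$, ensures that $D$ interchanges $\Stsim$ and $\Cssim$ upon passing to the opposite algebra. Each $G$ is covariant exact, so the composition, having two contravariant and two covariant factors, is overall covariant.

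To identify the right-hand end with $R(R(A))$, Lemma \ref{characteristictiltingforaop} yields the ring isomorphism $\End_{A^{op}}(DT)\cong \End_A(T)^{op}=B$, so $R(A^{op})=\End_{A^{op}}(DT)^{op}\cong B^{op}$; this, together with Corollary \ref{uniquenessofringeldual}, justifies the middle category $\mathcal{F}(\Stsim_{B^{op}})$ appearing in the display. The composite is then an exact equivalence $\mathcal{F}(\Stsim_A)\to\mathcal{F}(\Stsim_{R(R(A))})$, and invoking Theorem \ref{standardsgivesthewholealgebra} produces a Morita equivalence $A\m\to R(R(A))\m$ respecting the split quasi-hereditary structures. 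Tracking $\St(\l)$ through the chain shows that it is mapped to $\St_{R(R(A))}(\l)$ up to an invertible $R$-twist, and the poset on $R(R(A))$ is $(\L^{op})^{op}=\L$, consistent with this identification.

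The main technical obstacle, though more tedious than deep, is the bookkeeping to verify that each functor in the chain lands in the advertised subcategory and is exact in the required sense. This reduces to using the $R$-projectivity afforded by Lemma \ref{extvanishesforfiltratedmodules}(d), together with Corollary \ref{dualofstandardiscostandard} and Lemma \ref{functorringeldualpart1}; nothing beyond the tools developed in the earlier sections is required.
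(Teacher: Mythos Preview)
Your approach is sound and takes a genuinely different route from the paper. The paper proceeds concretely: it exhibits $GI=\Hom_A(T,\bigoplus_\lambda I(\lambda))$ as a characteristic tilting module for $B=R(A)$, identifies $\End_B(GI)^{op}$ directly with an algebra Morita equivalent to $A$, and then verifies by induction on $|\Lambda|$ that the resulting Morita equivalence carries $\Delta_A(\lambda)$ to $\Delta_{R(R(A))}(\lambda)$. Your route instead chains four exact equivalences and invokes Theorem~\ref{standardsgivesthewholealgebra}, avoiding both the explicit construction of a tilting module for $B$ and the inductive check on standard modules; this is more conceptual and makes good use of the paper's main theorem.

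The one step that is more than bookkeeping is the middle identification: you need $(R(A^{op}))^{op}$ and $R(A)$ to coincide \emph{as split quasi-hereditary algebras}, not merely as rings. The ring isomorphism $\End_{A^{op}}(DT)^{op}\simeq B^{op}$ is indeed immediate from Lemma~\ref{characteristictiltingforaop}, but in order to compose with $G_B$ (which is built from the costandards $\Cs_B$ of the structure $(B,\{\Hom_A(T,\Cs_A(\lambda))\})$) you must know that $D\Hom_A(\Delta_A(\lambda),T)\simeq \Cs_B(\lambda)$. This is true and follows from the uniqueness clause of Proposition~\ref{existenceofcostandardmodules} once one checks the orthogonality $\Ext^i_B(\Delta_B(\mu),D\Hom_A(\Delta_A(\lambda),T))=\delta_{i,0}\delta_{\lambda,\mu}R$, but that verification is a genuine computation which neither Corollary~\ref{dualofstandardiscostandard} nor Corollary~\ref{uniquenessofringeldual} supplies on its own. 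The paper's explicit construction sidesteps this compatibility question entirely.
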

	\begin{proof}
		Define $I=\sumSt I(\l)$, where $I(\l)$ is an $(A, R)$-injective module in the conditions of Theorem \ref{quasihereditaryintermsofcostandards}. In particular, each $I(\l)\in \mathcal{F}(\Cssim)$. We will denote by $G$ and $B$ the functor $\Hom_A(T, -)$ and the Ringel dual $R(A)$, respectively. By Lemma \ref{functorringeldualpart1}(b), $GI\in \mathcal{F}(\Stsim_B)$. By Lemma \ref{functorringeldualpart1}, $\Ext_B^1(G\Cs(\l), GN)\simeq \Ext_A^1(\Cs(\l), N)$ for every $N\in \mathcal{F}(\Cssim_B)$, for every $\l\in \L$. In particular, for $N=I$, and for every $\l\in \L$,
		\begin{align}
			\Ext_B^1(\St_B(\l), GI)\simeq \Ext_A^1(\Cs(\l), I)\simeq \Ext_{(A, R)}^1(\Cs(\l), I)=0.
		\end{align}
		By Theorem \ref{filtrationsintermsofext}, $GI\in \mathcal{F}(\Cssim_B)$. Hence, $GI$ is a partial tilting module. Applying $G$ to the exact sequence
		\begin{align}
			0\rightarrow \Cs(\l)\rightarrow I(\l)\rightarrow C(\l)\rightarrow 0
		\end{align}we obtain the exact sequence
		$
		0\rightarrow \St_B(\l)\rightarrow GI(\l)\rightarrow GC(\l)\rightarrow 0
		$ with $GC(\l)\in \mathcal{F}(\Cssim_{B_{\mu>\l}})=\mathcal{F}(\Cssim_{B_{\mu<_B \l}})$. Therefore, $GI$ is a characteristic tilting module. Furthermore, since $G$ is full and faithful on $\mathcal{F}(\Cssim_A)$, we can write $\End_B(GI)^{op}\simeq \End_A(I)^{op}\simeq \End_{A}(DI)$ which is Morita equivalent to $\End_{A}(A)\simeq A$ because $\add DI=\add A_A$.  
		
		Note that $R(R(A))$ is isomorphic to $\End_{A}(DI)\simeq \End_A(P_{op})$ as $R$-algebras, where $P_{op}$ is the progenerator $\bigoplus_{\l\in \L} P_{op}(\l)$ making $(A^{op}, D\Cs(\l))$ a split quasi-hereditary algebra. So, the equivalence of categories is given by the functor $\Hom_A(\Hom_A(P_{op}, A), -)\colon A\m\rightarrow R(R(A))\m$. Denote this functor by $H$. 
		It is enough to prove that $\Hom_A(\Hom_A(P_{op}, A), \St(\l))\simeq \St_{R(R(A))}(\l)$ for every $\l\in \L$.
		
		Observe that, if $\l\in \L$ is maximal, then $D\Hom_A(\St(\l), A)\simeq I(\l)$ (see Remark \ref{nakayamaonstandardmaximal}). Thus,  \begin{align}
			H\St(\l)&\simeq \Hom_A(\Hom_A(\St(\l), A), P_{op})\simeq \Hom_A(I, D\Hom_A(\St(\l), A))\simeq \Hom_A(I, I(\l))\\&\simeq \Hom_{R(A)}(GI, GI(\l))\simeq \St_{R(R(A))}(\l).
		\end{align} Assume that $|\L|>1$. Let $J$ be the split heredity ideal associated with $\St(\l)$. Denote by $H_J$ the functor $$\Hom_A(\Hom_A(\bigoplus_{\mu\in \L\backslash \{ \l\}} P(\mu)/JP(\mu), A/J ), -)\colon A/J\m\rightarrow R(R(A))\m.$$ By induction, $H_J\St(\mu)\simeq \St_{R(R(A/J))}(\mu)=\St_{R(R(A))}(\mu)$ for every $\mu\neq \l$, $\mu\in \L$. Hence, it is enough to check that $H_JX\simeq HX$ for all $X\in A/J\m$.  Since $J=J^2$ it follows $\Hom_A(P(\mu)/JP(\mu), A/J)\simeq \Hom_A(P(\mu), A/J)$ for every $\mu\in \L$ and $\Hom_A(P(\l), A/J)=0$ (see for example \citep[Proposition 4.7]{Rouquier2008}). Therefore, $$\Hom_A(\bigoplus_{\mu\in \L\backslash \{ \l\}} P(\mu)/JP(\mu), A/J )\simeq \Hom_A(P_{op}, A/J).$$ Moreover, $\Hom_A(\Hom_A(P_{op}, J), X)=0$ for all $X\in A/J\m$.
		Thus, $HX\simeq H_JX$ for every $X\in A/J\m$. Hence, $H$ sends $\St(\mu)$ to $\St_{R(R(A))}(\mu)$ for all $\mu\in \L$. 
	\end{proof}

	\begin{Cor}Let $(A\m, \{\Delta(\lambda)_{\lambda\in \Lambda}\})$ and $(B\m, \{\Omega(\chi)_{\chi\in X}\})$ be two split highest weight categories. 
		$B$ is a Ringel dual of $A$ if and only if there is an exact equivalence between the categories $\mathcal{F}(\Stsim)$ and $\mathcal{F}(\tilde{\mho}
		_{B}),$\label{standardstocostandardsringel}
		where $\mho$ denotes the set of costandard modules of $B$.
	\end{Cor}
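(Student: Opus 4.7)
The plan is to reduce both directions of the equivalence to Lemma \ref{functorringeldualpart1}(a), Proposition \ref{ringeldualsplitqhmorita} and Theorem \ref{standardsgivesthewholealgebra}. The key observation is that Lemma \ref{functorringeldualpart1}(a), applied to $B$ itself with a characteristic tilting module $T_B$, produces an exact equivalence
\begin{equation*}
\Hom_B(T_B,-)\colon \mathcal{F}(\tilde{\mho}_B)\longrightarrow \mathcal{F}(\Stsim_{R(B)}),
\end{equation*}
sending $\mho_B(\chi)$ to the standard module $\St_{R(B)}(\chi)$. So the corollary reduces to identifying $A$ and $R(B)$ as split quasi-hereditary algebras up to Morita equivalence.

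For the forward direction I would assume $B$ is a Ringel dual of $A$. Then by Proposition \ref{ringeldualsplitqhmorita}, $R(B)$ is Morita equivalent to $A$ as a split quasi-hereditary algebra, and consequently the exact categories $\mathcal{F}(\Stsim_{R(B)})$ and $\mathcal{F}(\Stsim_A)$ are exact-equivalent (any Morita equivalence that preserves the highest weight structure restricts). Composing this with the above displayed equivalence (inverted) gives the required exact equivalence $\mathcal{F}(\Stsim)\to \mathcal{F}(\tilde{\mho}_B)$.

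For the converse, given an exact equivalence $\Phi\colon \mathcal{F}(\Stsim)\to \mathcal{F}(\tilde{\mho}_B)$, I would compose with $\Hom_B(T_B,-)$ to obtain an exact equivalence
\begin{equation*}
\mathcal{F}(\Stsim_A)\xrightarrow{\Phi}\mathcal{F}(\tilde{\mho}_B)\xrightarrow{\Hom_B(T_B,-)} \mathcal{F}(\Stsim_{R(B)}).
\end{equation*}
Theorem \ref{standardsgivesthewholealgebra} then lifts this to a Morita equivalence $A\m\to R(B)\m$ which is simultaneously an equivalence of split highest weight categories. Proposition \ref{ringeldualsplitqhmorita} gives that $R(R(B))$ is Morita equivalent to $B$ as split quasi-hereditary algebra; since Ringel duals depend only on the Morita equivalence class of the underlying split quasi-hereditary structure, the Morita equivalence $A\Morita R(B)$ implies $R(A)\Morita R(R(B))\Morita B$, so $B$ is a Ringel dual of $A$.

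The argument is mostly bookkeeping — the substantive work has already been carried out in Lemma \ref{functorringeldualpart1}, Proposition \ref{ringeldualsplitqhmorita} and Theorem \ref{standardsgivesthewholealgebra}. The only subtle point to handle carefully is the invariance of ``being a Ringel dual'' under Morita equivalences that preserve the split highest weight structure; this needs to be checked at the level of characteristic tilting modules (noting that such a Morita equivalence sends characteristic tilting modules to characteristic tilting modules, by Theorem \ref{severalpropertiestilting}(e) and the fact that $\mathcal{F}(\Stsim)\cap \mathcal{F}(\Cssim)$ is preserved), so that the endomorphism algebras on both sides are Morita equivalent. That is the only place where anything beyond direct invocation of earlier results is needed.
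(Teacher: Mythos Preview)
Your proposal is correct and follows essentially the same route as the paper: both directions are obtained by composing the exact equivalence $\mathcal{F}(\tilde{\mho}_B)\simeq\mathcal{F}(\Stsim_{R(B)})$ from Lemma~\ref{functorringeldualpart1} with the identification $R(R(-))\Morita(-)$ from Proposition~\ref{ringeldualsplitqhmorita}, and the converse is closed off via Theorem~\ref{standardsgivesthewholealgebra}. Your explicit remark about the Morita-invariance of the Ringel dual (via Theorem~\ref{severalpropertiestilting}(e)) spells out a step the paper leaves implicit when passing from $A\Morita R(B)$ to $R(A)\Morita B$.
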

	\begin{proof}Let $B=R(A)$ be a Ringel dual of $A$. 
		By Lemma \ref{functorringeldualpart1}, there is an exact equivalence \linebreak\mbox{$\mathcal{F}(\Cssim_{R(A)})\simeq \mathcal{F}(\Stsim_{R(R(A))})$.} By  Proposition \ref{ringeldualsplitqhmorita}, there is an exact equivalence $\mathcal{F}(\Stsim_{R(R(A))})\simeq \mathcal{F}(\Stsim_A)$.
		
		Conversely, assume that there is an exact equivalence between the categories $\mathcal{F}(\Stsim_A)$ and $\mathcal{F}(\tilde{\mho}_{B}).$ By Lemma \ref{functorringeldualpart1}, there is an exact equivalence 
		$
		\mathcal{F}(\Stsim_A)\simeq \mathcal{F}(\tilde{\mho}_{B})\simeq \mathcal{F}(\tilde{\Omega}_{R(B)}).
		$ By Theorem \ref{standardsgivesthewholealgebra}, $R(B)\m$ and $A\m$ are equivalent as split highest weight categories. By Proposition \ref{ringeldualsplitqhmorita}, we conclude that $B\m$ and $R(A)\m$ are equivalent as split highest weight categories.
	\end{proof}

	\subsection{Ringel self-duality} We say that a split quasi-hereditary algebra $(A, \{\Delta(\lambda)_{\lambda\in \Lambda}\})$ over a commutative Noetherian ring $R$ is \textbf{Ringel self-dual} if $A\m$ and $R(A)\m$ are equivalent as split highest weight categories, where $R(A)$ is a Ringel dual of $A$. In view of Corollary \ref{standardstocostandardsringel}, $A$ is Ringel self-dual if and only if there exists an exact equivalence between $\mathcal{F}(\Stsim)$ and $\mathcal{F}(\Cssim)$.

	\begin{Lemma}\label{ringeldualintermsofresiduefield}
		Let $R$ be a local commutative Noetherian ring with maximal ideal $\mi$. Let $(A, \{\Delta(\lambda)_{\lambda\in \Lambda}\})$ be a split quasi-hereditary $R$-algebra. Then, $A$ is Morita equivalent to its Ringel dual as split quasi-hereditary algebra if and only if $A(\mi)$ is Morita equivalent to its Ringel dual as split quasi-hereditary algebra.
	\end{Lemma}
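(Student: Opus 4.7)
My plan is to first rephrase Ringel self-duality as the existence of a specific kind of progenerator, and then handle the two implications by base change and by lifting, respectively. Applying Corollary \ref{standardstocostandardsringel} with $B=A$ together with Theorem \ref{standardsgivesthewholealgebra}, $A$ is Morita equivalent to its Ringel dual as split quasi-hereditary algebra precisely when there is a progenerator $P$ of $A$ with $\End_A(P)^{op}\simeq R(A)$ and $\Hom_A(P,\St(\l))\simeq \St_{R(A)}(\phi(\l))\otimes_R U_\l$ for some bijection $\phi$ of posets and $U_\l\in Pic(R)$. Because $R$ is local the Picard group is trivial, so the twists $U_\l$ are all free of rank one and drop out; the same reduction applies at $A(\mi)$.

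For the forward direction I would simply base-change. If $P$ is such a progenerator over $A$, then $P(\mi)$ is a progenerator of $A(\mi)$ because projectivity and the generator property descend under $R(\mi)\otimes_R-$. Using the Hom-base-change isomorphism (Corollary \ref{hommaximalforfiltrations}, together with the fact that Hom out of a projective module commutes with tensor products over $R$) one obtains $\End_{A(\mi)}(P(\mi))^{op}\simeq \End_A(P)^{op}(\mi)\simeq R(A)(\mi)$, and Proposition \ref{costandardsundergroundringchangetwo} identifies $R(A)(\mi)$ with $R(A(\mi))$ as split quasi-hereditary $R(\mi)$-algebras. Applying Corollary \ref{hommaximalforfiltrations} to the standards, together with the triviality of the Picard group of the residue field, yields $\Hom_{A(\mi)}(P(\mi),\St_{A(\mi)}(\l))\simeq \St_{R(A(\mi))}(\phi(\l))$, so the equivalence descends.

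For the converse, I would begin with a progenerator $\overline{P}$ of $A(\mi)$ realising the split quasi-hereditary Morita equivalence $A(\mi)\sim R(A(\mi))$. Using Krull--Schmidt over the field $R(\mi)$, write $\overline{P}\simeq\bigoplus_{\l\in \L} P_{A(\mi)}(\l)^{m_\l}$ and lift this to $P:=\bigoplus_{\l\in \L} P_A(\l)^{m_\l}$, which is automatically a progenerator of $A$ with $P(\mi)\simeq\overline{P}$ (all multiplicities are positive since $\overline{P}$ is a generator). Setting $B:=\End_A(P)^{op}$, the equivalence $\Hom_A(P,-)$ makes $B$ into a split quasi-hereditary $R$-algebra with standards $\St_B(\l)=\Hom_A(P,\St(\l))$ which is Morita equivalent to $A$ as split quasi-hereditary algebra, and by the hypothesis combined with the base-change results used above, $B(\mi)$ is identified with $R(A)(\mi)$ as split quasi-hereditary algebras. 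The task then reduces to showing that $B$ and $R(A)$ are themselves Morita equivalent as split quasi-hereditary $R$-algebras.

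The main obstacle is producing such a global Morita equivalence from the residue-level isomorphism. The plan is to note that both $\St_B(\l)=\Hom_A(P,\St(\l))$ and $\St_{R(A)}(\phi(\l))=\Hom_A(T,\Cs(\phi(\l)))$ lie in $R\proj$ (by projectivity of $P$ over $A$ in the first case and Corollary \ref{homofstandardcostandardisproj} in the second), become isomorphic modulo $\mi$ by the hypothesis and Corollary \ref{hommaximalforfiltrations}, and that over a local ring a morphism of finitely generated projective modules is an isomorphism if and only if it becomes one modulo the maximal ideal. Lifting these isomorphisms and organising them compatibly --- using Corollary \ref{hommaximalforfiltrations} and the filtration of Hom-groups in Proposition \ref{filtrationofhom} to control how the Ext-groups between standards base-change --- produces an exact equivalence $\mathcal{F}(\Stsim_B)\simeq \mathcal{F}(\Stsim_{R(A)})$, which by Theorem \ref{standardsgivesthewholealgebra} lifts to a Morita equivalence of split quasi-hereditary $R$-algebras, completing the argument.
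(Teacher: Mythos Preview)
Your forward direction and the initial setup of the converse (lifting the residue-level progenerator to a progenerator $P$ of $A$ with $P(\mi)\simeq\overline{P}$) match the paper's argument exactly.

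The divergence, and the gap, is in your final paragraph. You set $B=\End_A(P)^{op}$ and then try to manufacture an exact equivalence $\mathcal{F}(\Stsim_B)\simeq\mathcal{F}(\Stsim_{R(A)})$ by ``lifting these isomorphisms and organising them compatibly''. This is not an argument. The objects $\St_B(\l)=\Hom_A(P,\St(\l))$ and $\St_{R(A)}(\phi(\l))$ live over \emph{different} algebras $B$ and $R(A)$; an isomorphism between their residue-field reductions is not a morphism you can lift, and matching Hom- and Ext-groups between standards (even with the base-change control of Corollary~\ref{hommaximalforfiltrations} and Proposition~\ref{filtrationofhom}) does not produce a functor. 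An exact equivalence requires an honest functor between the module categories, which you never specify. In general, two projective Noetherian $R$-algebras over a local ring whose reductions modulo $\mi$ are isomorphic need not be isomorphic or even Morita equivalent, so this step cannot be waved through.

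The paper avoids this entirely. Instead of trying to build a categorical equivalence from scratch and appealing to Theorem~\ref{standardsgivesthewholealgebra}, it applies Propositions~\ref{standardscotiltingsreductiontofields} and~\ref{standardsrreductionfields} directly: once one knows $\Hom_A(P,\St(\l))(\mi)\simeq\St_{R(A)}(\phi(\l))(\mi)$, Proposition~\ref{standardsrreductionfields} (a ready-made lifting lemma inside $\mathcal{F}(\Stsim)$) forces $\Hom_A(P,\St(\l))\simeq\St_{R(A)}(\phi(\l))$, since $Pic(R)$ is trivial. This is precisely the device that replaces your informal ``lift and organise'' step, and you should invoke it rather than attempting to reconstruct an equivalence by hand.
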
 
	\begin{proof}
		Assume that $A$ is Morita equivalent to its Ringel dual as split quasi-hereditary algebra. That is, there exists a progenerator $P$ of $A\m$ so that the Ringel dual of $A$ is the endomorphism algebra $\End_A(P)^{op}$ and $\Hom_A(P, -)\colon A\m\rightarrow R(A)\m$ is an equivalence of categories that sends $\St_A(\l)$ to $\St_{R(A)}(\phi(\l))$ for some bijection $\phi\colon \L\rightarrow \L^{op}$. Hence,
		$P(\mi)$ is a progenerator of $A(\mi)$ and \begin{align}
			\End_{A(\mi)}(P(\mi))^{op}\simeq \End_A(P)^{op}\otimes_R R(\mi)\simeq \End_A(T)^{op}\otimes_R R(\mi)\simeq \End_{A(\mi)}(T(\mi))^{op}.
		\end{align} Moreover, $$\Hom_{A(m)}(P(\mi), \St(\l)(\mi)) \simeq \St_{R(A)}(\phi(\l))\otimes_R U_\l (\mi)\simeq \St_{R(A)}(\phi(\l))(\mi).$$  Hence, $A(\mi)$ is Ringel self-dual. 
		
		Conversely, assume that $A(\mi)$ is Morita equivalent to its Ringel dual as split quasi-hereditary algebras. Since $A$ is semi-perfect we can assume that the projective modules $P(\l)$ are the projective covers of $\St(\l)$. Hence, if $P_{(\mi)}$ is the progenerator giving the Morita equivalence between $A$ and its Ringel dual, we can choose $P\in A\m$ so that $P(\mi)\simeq P_{(\mi)}$. In particular, $P$ is a progenerator of $A$ and for every $\l\in \L$,\begin{align}
			\Hom_A(P, \St(\l))(\mi)\simeq \Hom_{A(\mi)}(P_{(\mi)}, \St(\l)(\mi))\simeq \St_{R(A)}(\phi(\l))(\mi)
		\end{align}for some bijection $\phi\colon \L\rightarrow \L^{op}$. 
		By Propositions \ref{standardscotiltingsreductiontofields} and \ref{standardsrreductionfields}, $\Hom_A(P, \St(\l))\simeq \St_{R(A)}(\phi(\l))$ since the Picard group of $R$ is trivial. 
	\end{proof}
	
	\subsection{Endomorphism algebras of partial tilting modules}  We say that an algebra $A$ over a commutative Noetherian ring $R$ has a \textbf{duality} $\omega$  if $\omega\colon A\rightarrow A $ is an anti-isomorphism of algebras that satisfies $\omega^2=\id_A$ and it fixes a set of orthogonal idempotents $\{e_1, \ldots, e_t \}$ with the following property: for each maximal ideal $\mi$ of $R$ $\{e_1^\mi,\ldots, e_t^\mi  \}$ is a complete set of primitive orthogonal idempotents of $A/\mi A$, where $e_i^\mi$ denotes the image of $e_i$ in $A/\mi A$. 
	
	We say that $(A, \{e_1, \ldots, e_t\})$ is a \textbf{split quasi-hereditary algebra with a duality} $\iota$ if $\iota$ is a duality of $A$ (with respect to the set of orthogonal idempotents $\{e_1, \ldots, e_t\})$ and $A$ is split quasi-hereditary with split heredity chain $
	0\subset Ae_tA\subset \cdots \subset A(e_1+\cdots+ e_t)A=A. 
	$  In such a case, we also say $(A, \{\Delta(\lambda)_{\lambda\in \Lambda}\})$ is a split quasi-hereditary algebra with a duality $\iota$ if $(A, \{e_1, \ldots, e_t\})$ is a split quasi-hereditary algebra with a duality $\iota$ with standard modules $\St(\l)$, $\l\in \L.$
	
	We say that $(A, \{\Delta(\lambda)_{\lambda\in \Lambda}\})$ is a \textbf{split quasi-hereditary cellular algebra } if $(A, \{\Delta(\lambda)_{\lambda\in \Lambda}\})$ is a split quasi-hereditary algebra and a split heredity chain of $(A, \{\Delta(\lambda)_{\lambda\in \Lambda}\})$  is also a cell chain of $A$ with respect to some involution.
	
	The following result indicates that endomorphism algebras of partial tilting modules (which become indecomposable under extension of scalars to fields) over a split quasi-hereditary algebra with a duality are in some sense locally cellular. The classical case can be found in \cite{MR3708257} and \citep[Theorem 1.1]{MR3787549}. For a background to cellular algebras, we refer to \cite{zbMATH00871761, zbMATH01218863, zbMATH01463504}. 
	
	\begin{Theorem}\label{endomorphismoftiltingiscellular}
		Let $R$ be a Noetherian commutative local ring and $A$ a projective Noetherian $R$-algebra. Assume that there exists a set of orthogonal idempotents $\{e_1, \ldots, e_t\}$ so that $(A, \{e_1, \ldots, e_t\})$ is a split quasi-hereditary algebra with a duality $\iota$. Assume that $T$ is a characteristic tilting module of $A$ so that each summand $T(i)(\mi)$ is indecomposable over $A(\mi)$ for every $\mi\in \MaxSpec R$. Let $M\subset T$ be the direct sum of a finite number of summands $T(i)$, $i=1, \ldots, t$.
		Then, $\End_A(M)^{op}$ is a cellular algebra.
	\end{Theorem}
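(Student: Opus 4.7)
The plan is a reduce-and-lift strategy: produce an anti-involution on $B := \End_A(M)^{\mathrm{op}}$, apply the classical field-case cellularity theorem to $B(\mi) := B/\mi B$, and then lift the resulting cell chain back to $B$ using the integral Hom filtration of Proposition~\ref{filtrationofhom}.

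First, I would construct an anti-involution $\sigma$ on $B$. The duality $\iota$ gives an exact auto-equivalence $\iota^*$ of $A\m$; since $\iota$ fixes the heredity chain, $\iota^*$ interchanges standard and costandard modules up to a twist by some $U_\l \in \mathrm{Pic}(R)$, which is trivial because $R$ is local. Hence $\iota^*$ preserves $\add T$, and combined with the indecomposability of $T(i)(\mi)$ together with Proposition~\ref{standardscotiltingsreductiontofields}(c) and Nakayama, we get $\iota^*(T(i)) \simeq T(i)$ and thus $\iota^*(M) \simeq M$. This yields an $R$-algebra anti-involution $\sigma$ on $B$ with $\sigma^2 = \id_B$.

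Next, I would reduce modulo $\mi$. By Corollary~\ref{hommaximalforfiltrations}, $B(\mi) \simeq \End_{A(\mi)}(M(\mi))^{\mathrm{op}}$, and $\iota$ descends to a duality on $A(\mi)$, so $(A(\mi), \iota(\mi))$ satisfies the hypotheses of the classical cellularity theorem \cite{MR3708257, MR3787549}. Hence $B(\mi)$ is cellular with a cell chain $0 = \bar J_0 \subset \cdots \subset \bar J_n = B(\mi)$ whose pieces and cell modules $\bar\Delta_j$ come from standard and costandard filtrations of $M(\mi)$. Over $R$, Proposition~\ref{filtrationofhom} (applied with $L = M$, which lies in $\mathcal{F}(\Cssim)$ since $M \in \add T$) gives an $R$-filtration of $B$ by submodules $X_j = \Hom_A(M/M_{\geq j}, L_{\leq j})$ with free quotients $\Hom_R(U_j, S_j)$. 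Corollary~\ref{hommaximalforfiltrations} ensures $X_j(\mi) = \bar J_j$, so each $X_j$ is a two-sided ideal (ideal structure from the explicit description in Proposition~\ref{filtrationofhom}, and $\sigma$-invariance by Nakayama applied to the mod-$\mi$ invariance). Setting $\Delta_j := \Hom_A(\St(j), M) \otimes_R U_j$, which is a free $R$-module lifting $\bar\Delta_j$, the bimodule isomorphism $X_j/X_{j-1} \simeq \Delta_j \otimes_R \sigma(\Delta_j)$ over $R$ is assembled from the same Hom--tensor adjunctions used to identify $\bar J_j/\bar J_{j-1}$ over $R(\mi)$; these adjunctions are functorial, and everything in sight is free over $R$, so the identification lifts uniquely.

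The main obstacle will be verifying the cell-ideal bimodule isomorphism at the integral level, i.e. that the identification $X_j/X_{j-1} \simeq \Delta_j \otimes_R \sigma(\Delta_j)$ intertwines $\sigma$ in the way demanded by the Graham--Lehrer/K\"onig--Xi cellular axioms. The tools for this are the explicit form of $\sigma$ coming from $\iota^*$ and the adjunction-based description of standard modules of the Ringel dual from Lemma~\ref{functorringeldualpart1}; once the verification is carried out over $R(\mi)$ and both sides of the candidate isomorphism are shown to be free $R$-modules of matching rank, Nakayama promotes the residual isomorphism to an isomorphism of $B$-bimodules over $R$, completing the cell chain.
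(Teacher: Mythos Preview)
Your strategy is genuinely different from the paper's. The paper does not build a cell chain for $\End_A(M)^{op}$ directly. Instead it passes to the full Ringel dual $R(A)=\End_A(T)^{op}$, constructs an anti-involution $\tau$ on $R(A)$ from $\iota$ via the isomorphism $T\simeq {}^\natural T$, observes that $\tau$ fixes the idempotents $f_i\colon T\twoheadrightarrow T(i)\hookrightarrow T$, and then invokes two cited facts: a split quasi-hereditary algebra with a duality is cellular with its heredity chain as cell chain, and the corner algebra $fR(A)f$ at a $\tau$-fixed idempotent $f$ of a cellular algebra is again cellular. Since $\End_A(M)^{op}\simeq fR(A)f$ for the idempotent $f$ picking out the summands of $M$, this finishes the proof in two strokes. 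This route never touches Proposition~\ref{filtrationofhom} or any Nakayama lifting; the ideal structure and $\sigma$-compatibility come for free from the heredity chain $R(A)f_1R(A)\subset\cdots$ of $R(A)$.

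Your approach could in principle be completed, but there is a real gap at the step ``ideal structure from the explicit description in Proposition~\ref{filtrationofhom}''. That proposition produces an $R$-module filtration $X_i=\Hom_A(M/M_{n-i+2},L_i)$ depending on \emph{chosen} filtrations $M_\bullet$ and $L_\bullet$; nothing in its statement makes $X_i$ a two-sided ideal of $\End_A(M)$. To get the ideal property you would need to identify $M_{n-i+2}$ with $J_{n-i+2}M$ and $L_i$ with the $J_{n-i+2}$-annihilator in $M$, so that $X_i$ becomes $\{f:\ J_{n-i+2}\cdot\operatorname{im}f=0\}$, which is visibly two-sided. This identification is not stated anywhere in the paper and needs an argument. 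Similarly, your ``$\sigma$-invariance by Nakayama applied to the mod-$\mi$ invariance'' is too quick: Nakayama does not compare two submodules unless you first produce an inclusion between them; you would need to argue that $(X_j+\sigma(X_j))/X_j$ vanishes mod $\mi$ and then apply Nakayama to that quotient, and symmetrically. Finally, assembling the $\sigma$-equivariant bimodule isomorphism $X_j/X_{j-1}\simeq\Delta_j\otimes_R\sigma(\Delta_j)$ by adjunction requires you to know that the classical field-case isomorphism in \cite{MR3708257,MR3787549} is literally the reduction of your integral map, not merely an abstract isomorphism of the same modules; otherwise the Nakayama step does not apply to the map you care about. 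The paper's detour through $R(A)$ sidesteps every one of these verifications.
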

	\begin{proof}
		The duality $\iota$ induces a functor ${}^\iota(-) \colon A\m \rightarrow A^{op}\m$. In particular, ${}^\iota P(i)={}^\iota(Ae_i)=e_iA$ (see for example \cite{zbMATH05871076}) and so it follows that $\iota$ preserves the split quasi-hereditary structure. Consider the contravariant functor ${}^\natural(-)\colon A\m \rightarrow A\m$ given by $D\circ {}^\iota (-)$. So, ${}^\natural (-) $ is a duality functor that interchanges $\St(i)$ with $\Cs(i)$ and as in Lemma 3.2 of \cite{zbMATH05871076} ${}^\natural T(i)\simeq T(i)$. Let $s\colon T\rightarrow {}^\natural T$ be an isomorphism of $A$-modules. Denote by $\alpha\colon \End_A(T)\rightarrow \End_A({}^\natural T)$ the isomorphism of $R$-algebras, given by $\alpha(f)=s\circ f\circ s^{-1}$, $f\in \End_A(T)$ and denote by $\beta\colon \End_A(T)\rightarrow \End_A({}^\natural T)$ the anti-isomorphism of $R$-algebras, given by $\beta(f)(h)(t)=h(f(t))$, $h\in DT$, $t\in T$. Put $\tau=\beta^{-1}\circ \alpha$. By Proposition 2.4 of \cite{zbMATH05871076}, $\tau$ is a duality of the Ringel dual $R(A):=\End_A(T)^{op}$. That is, $\tau$ fixes all maps $T\twoheadrightarrow T(i)\hookrightarrow T$ for every $i$, and $\tau^2=\id_{R(A)}$.
		In particular, $\tau$ fixes the idempotent $f$ of $R(A)$ such that $\Hom_A(T, M)\simeq R(A)f$. Observe that $R(A)$ is split quasi-hereditary with standard modules $\Hom_A(T, \Cs(i))$ with the reversed order on $\{1, \ldots, t\}$. Thus, if we denote by $f_i$ the idempotents $T\twoheadrightarrow T(i)\hookrightarrow T$, $R(A)$ has the split heredity chain
		\begin{align}
			0\subset R(A) f_1 R(A)\subset \cdots \subset R(A) (f_1+\ldots+ f_t)R(A)=R(A).
		\end{align} 
		By \citep[Proposition 4.0.1]{cruz2021cellular}, $R(A)$ is a cellular algebra. By \citep[Proposition 2.2.11]{cruz2021cellular}, $\End_A(M)^{op}\simeq \End_{R(A)}(\Hom_A(T, M))^{op}$ is a cellular algebra.
	\end{proof}  
	
	Such condition on partial tilting modules holds for example if the ground ring is local regular with Krull dimension one.
	\begin{Prop}\label{locallytiltings}
		Let $R$ be a Noetherian commutative local regular ring with Krull dimension one and with maximal ideal $\mi$. Let $(A, \{\Delta(\lambda)_{\lambda\in \Lambda}\})$ be a split quasi-hereditary $R$-algebra. For each $\l\in \L$ there are partial tilting modules $T(\l)$ so that $T(\l)(\mi)$ is indecomposable over $A(\mi)$.
	\end{Prop}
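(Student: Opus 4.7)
The plan is to prove the statement by induction on the poset $\L$. For the base case, $\l$ minimal, take $T(\l)=\St(\l)=\Cs(\l)$; the reduction $T(\l)(\mi)\simeq \St(\l)(\mi)$ is indecomposable because $\End_{A(\mi)}(\St(\l)(\mi))\simeq R(\mi)$ is a field, so $\St(\l)(\mi)$ admits no nontrivial idempotent.

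For the inductive step, fix $\l$ and assume $T(\mu)$ has been produced for every $\mu<\l$ with $T(\mu)(\mi)$ indecomposable. Starting from $T_{i_\l}(\l):=\St(\l)$, I run the iterative procedure of Proposition \ref{tiltingconstruction}, at each stage $j<i_\l$ forming the extension
\begin{align*}
0\rightarrow T_{j+1}(\l)\rightarrow T_j(\l)\rightarrow \St_j\otimes_R U_j\rightarrow 0,
\end{align*}
where $U_j$ is now chosen to be a free $R$-module of \emph{minimal} rank admitting a surjection $U_j\twoheadrightarrow \Ext_A^1(\St_j,T_{j+1}(\l))$. Since $R$ is a DVR (hence a PID with trivial Picard group), such a minimal free cover exists, and by Nakayama its rank equals $\dim_{R(\mi)} \Ext_A^1(\St_j,T_{j+1}(\l))(\mi)$. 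The goal is to show that, reducing modulo $\mi$, this construction reproduces the classical minimal construction of the unique indecomposable partial tilting $T_{(\mi)}(\l)$ over the finite-dimensional algebra $A(\mi)$ (whose existence and uniqueness follow from Krull--Schmidt on $\mathcal{F}(\St(\mi))\cap \mathcal{F}(\Cs(\mi))$), giving $T(\l)(\mi)\simeq T_{(\mi)}(\l)$ indecomposable.

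The main obstacle is verifying that the rank of $U_j$ over $R$ matches the corresponding dimension $\dim_{R(\mi)}\Ext_{A(\mi)}^1(\St_j(\mi),T_{j+1}(\l)(\mi))$ in the classical construction at every step. By the universal coefficient theorem applied to the $R$-projective cochain complex $\Hom_A(P^\bullet,T_{j+1}(\l))$, where $P^\bullet\rightarrow \St_j$ is a finite projective $A$-resolution, there is a short exact sequence
\begin{align*}
0\rightarrow \Ext_A^1(\St_j,T_{j+1}(\l))(\mi)\rightarrow \Ext_{A(\mi)}^1(\St_j(\mi),T_{j+1}(\l)(\mi))\rightarrow \Tor_1^R(R(\mi),\Ext_A^2(\St_j,T_{j+1}(\l)))\rightarrow 0,
\end{align*}
so equality of dimensions is equivalent to $R$-torsion-freeness of $\Ext_A^2(\St_j,T_{j+1}(\l))$. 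I would establish this by a parallel induction that tracks the torsion of the relevant Ext groups through the construction, exploiting Lemma \ref{extvanishesforfiltratedmodules} (the Ext-vanishing between $\mathcal{F}(\Stsim)$ and $\mathcal{F}(\Cssim)$) together with the fact that $T_{j+1}(\l)\in\mathcal{F}(\Stsim)$ has finite projective dimension over $A$, propagating torsion-freeness from the innermost layer $T_{i_\l}(\l)=\St(\l)$ outward. An alternative approach, which I would also keep in mind as a fallback, is to pass to the $\mi$-adic completion $\hat R$: by Proposition \ref{costandardsundergroundringchangetwo} the algebra $\hat A$ is split quasi-hereditary and, being module-finite over a complete local ring, is semi-perfect, so Krull--Schmidt gives the desired indecomposable summand of $\hat T(\l)$, and one then descends to $R$ using Corollary \ref{hommaximalforfiltrations} and Proposition \ref{standardscotiltingsreductiontofields}(c).
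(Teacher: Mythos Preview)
Your approach has a genuine gap, and it misses the key idea that makes the hypothesis ``Krull dimension one'' work.

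The torsion-freeness of $\Ext_A^2(\St_j,T_{j+1}(\l))$ is the crux of your main argument, but the proposed ``parallel induction'' is not a proof and there is no reason to expect it to succeed. The modules $T_{j+1}(\l)$ lie in $\mathcal{F}(\Stsim)$, and higher Ext groups between objects of $\mathcal{F}(\Stsim)$ carry no projectivity or torsion-freeness constraints in general: filtering $T_{j+1}(\l)$ by standards produces long exact sequences in which $\Ext^2_A(\St_j,T_{j+1}(\l))$ appears as an extension of submodules and quotients of various $\Ext^i_A(\St_j,\St_k)$, none of which need be $R$-free. Without this torsion-freeness the map $\Ext^1_A(\St_j,T_{j+1}(\l))(\mi)\to\Ext^1_{A(\mi)}(\St_j(\mi),T_{j+1}(\l)(\mi))$ fails to be surjective, so the reduced extension is not universal over $A(\mi)$ and you cannot conclude that $T_j(\l)(\mi)$ matches the minimal classical step. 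Your fallback via completion also does not close: over $\hat R$ you obtain an indecomposable summand $\hat X$ of $\hat R\otimes_R T(\l)$, but Corollary~\ref{hommaximalforfiltrations} and Proposition~\ref{standardscotiltingsreductiontofields} give base change and detection, not descent; there is no mechanism offered to produce $X\in A\m$ with $\hat R\otimes_R X\simeq \hat X$ when $R$ is not Henselian.

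The paper's argument is entirely different and much shorter, and it is the only place the regularity and Krull dimension hypothesis is really used. One starts from \emph{any} characteristic tilting module $T(\l)$ built in Proposition~\ref{tiltingconstruction}. By Proposition~\ref{approximationstilting} the map $T(\l)(\mi)\twoheadrightarrow\Cs(\l)(\mi)$ is a right $\mathcal{F}(\St(\mi))$-approximation, while the indecomposable $T_{(\mi)}(\l)\twoheadrightarrow\Cs(\l)(\mi)$ is the \emph{minimal} such approximation; comparing the two forces $T_{(\mi)}(\l)$ to be a direct summand of $T(\l)(\mi)$. Now lift generators to get an $A$-submodule $X\subset T(\l)$ whose image in $T(\l)(\mi)$ is $T_{(\mi)}(\l)$. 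Here is the point you never use: since $R$ is regular local of Krull dimension one it is hereditary, so every $R$-submodule of the $R$-projective module $T(\l)$ is itself $R$-projective. Hence $X\in A\m\cap R\proj$, and since $X(\mi)\simeq T_{(\mi)}(\l)$ is partial tilting over $A(\mi)$, Proposition~\ref{standardscotiltingsreductiontofields}(c) gives $X\in\add T$, i.e.\ $X$ is partial tilting over $A$ with $X(\mi)$ indecomposable. No control of Ext groups, no completion, no descent.
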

	\begin{proof}
		Denote by $T_{(\mi)}(\l)$ the indecomposable partial tilting module that fits into the exact sequence $0\rightarrow Z(\l)\rightarrow T_{(\mi)}(\l) \xrightarrow{\pi_\l^\mi} \Cs(\l)(\mi)\rightarrow 0$ and by $\pi_\l$ the surjective map $T(\l)\rightarrow \Cs(l)$ given in (\ref{eq121b}). Since both maps $\pi_\l^\mi$ and $\pi_\l(\mi)$ are surjective right $\mathcal{F}(\St(\mi))$-approximations of $\Cs(\l)(\mi)$ we obtain that there are maps $g\in \Hom_{A(\mi)}(T_{(\mi)}(\l), T(\l)(\mi) )$, $f\in \Hom_{A(\mi)}(T(\l)(\mi), T_{(\mi)}(\l) )$ satisfying $\pi_\l^\mi=\pi_\l\circ g$ and $\pi_\l=\pi_\l^\mi\circ g$. In particular, $\pi_\l^\mi=\pi_\l^\mi\circ f\circ g$. Since $\pi_\l^\mi$ is a minimal right approximation it follows that $f\circ g$ is an isomorphism. Therefore, $T_{(\mi)}(\l)$ is a summand of $T(\l)(\mi)$. Using Nakayama's Lemma, consider $X$ to be an $A$-submodule of $T(\l)$ satisfying $X(\mi)\simeq T_{(\mi)}(\l)$. Since $\gldim R\leq 1$ it follows that $X\in A\m\cap R\proj$. By Proposition \ref{standardscotiltingsreductiontofields}, $X$ is a partial tilting module of $A$. So, we can replace $T(\l)$ by $X$ because $R(\mi)\otimes_R -$ preserves filtrations by standard modules and so $X$ must also fit into exact sequences of the form (\ref{eq121}) and (\ref{eq121b}).
	\end{proof}

	In particular, a Ringel dual of a split quasi-hereditary algebra with a duality $\iota$ over a local commutative Noetherian regular ring with Krull dimension at most one is Morita equivalent as split quasi-hereditary algebra to a split quasi-hereditary cellular algebra.

\section*{Acknowledgments}
Most of the results of this paper are contained in the author's PhD thesis \citep{thesis}, financially supported by \textit{Studienstiftung des Deutschen Volkes}. The author would like to thank Steffen Koenig for all the conversations on these topics, his comments and suggestions towards improving this manuscript.

\bibliographystyle{alphaurl}
\bibliography{bibarticle}

\Address
\end{document}